\pgfplotsset{compat=1.15}
\newcommand*\patchAmsMathEnvironmentForLineno[1]{%
	\expandafter\let\csname old#1\expandafter\endcsname\csname #1\endcsname
	\expandafter\let\csname oldend#1\expandafter\endcsname\csname end#1\endcsname
	\renewenvironment{#1}%
	{\linenomath\csname old#1\endcsname}%
	{\csname oldend#1\endcsname\endlinenomath}}%
\newcommand*\patchBothAmsMathEnvironmentsForLineno[1]{%
	\patchAmsMathEnvironmentForLineno{#1}%
	\patchAmsMathEnvironmentForLineno{#1*}}%
\def\@seccntformat#1{\hspace*{4mm}%
	\protect\textup{\protect\@secnumfont
		\ifnum\pdfstrcmp{subsection}{#1}=0 \bfseries\fi
		\csname the#1\endcsname
		\protect\@secnumpunct
	}%
}
\def\section{\@startsection{section}{1}%
\z@{.7\linespacing\@plus\linespacing}{.5\linespacing}%
{\normalsize\scshape\bfseries\centering}}
\renewcommand{\@secnumfont}{\bfseries}
\newcounter{statement}
\newenvironment{statement}[2][!]{%
	\vskip3mm
	\hrule
	\hrule
	\hrule
	\vskip1mm
	\noindent%
	\refstepcounter{statement}%
	\bf#2~\thestatement%
	\ifthenelse{\equal{#1}{!}}{.\ }{~(#1).\ }%
	\it%
}{%
	\vskip1mm
	\hrule
	\hrule
	\hrule
	\vskip2mm
}
\newcounter{algorithm}
\renewcommand{\thealgorithm}{\Alph{algorithm}}
\newenvironment{algorithm}[1][!]{%
	\vskip3mm
	\hrule
	\hrule
	\hrule
	\vskip1mm
	\noindent%
	\refstepcounter{algorithm}%
	\bf%
	Algorithm~\thealgorithm%
	\ifthenelse{\equal{#1}{!}}{.\ }{~(#1).\ }%
	\it%
}{%
	\vskip1mm
	\hrule
	\hrule
	\hrule
	\vskip2mm
}
\newenvironment{theorem}[1][!]{\begin{statement}[#1]{Theorem}}{\end{statement}}
\newenvironment{lemma}[1][!]{\begin{statement}[#1]{Lemma}}{\end{statement}}
\newenvironment{proposition}[1][!]{\begin{statement}[#1]{Proposition}}{\end{statement}}
\newenvironment{remark}[1][!]{\begin{statement}[#1]{Remark}}{\end{statement}}
\newcommand{\defBlackboardCharacter}[1]{\expandafter\def\csname #1\endcsname{\mathbb{#1}}}
\newcommand{\defCaligraphicCharacter}[1]{\expandafter\def\csname #1#1\endcsname{\mathcal{#1}}}
\newcommand{\opt}{{\rm opt}}
\newcommand{\Cstab}{C_{\rm stab}}
\newcommand{\Crel}{C_{\rm rel}}
\newcommand{\Cdrel}{C_{\rm drel}}
\newcommand{\qred}{q_{\rm red}}
\newcommand{\Cmark}{C_{\rm mark}}
\newcommand{\Clin}{C_{\rm lin}}
\newcommand{\qlin}{q_{\rm lin}}
\newcommand{\Ccls}{C_{\rm cls}}
\newcommand{\Copt}{C_\opt}
\newcommand{\Cproblem}{C_{\A}}
\newcommand{\coarse}{H}
\newcommand{\fine}{h}
\newcommand{\refine}{{\tt refine}}
\renewcommand{\vec}[1]{\boldsymbol{#1}}
\newcommand{\normalvec}{\vec{n}}
\newcommand{\A}{A}
\newcommand{\B}{B}
\newcommand{\I}{I}
\renewcommand{\b}{\vec{b}}
\newcommand{\e}{\vec{e}}
\newcommand{\f}{\vec{f}}
\newcommand{\g}{\vec{g}}
\newcommand{\p}{\vec{p}}
\newcommand{\q}{\vec{q}}
\renewcommand{\v}{\vec{v}}
\newcommand{\w}{\vec{w}}
\renewcommand{\u}{\vec{u}}
\newcommand{\z}{\vec{z}}
\renewcommand{\r}{\vec{r}}
\newcommand{\G}{\vec{G}}
\renewcommand{\d}[1]{\,\mathrm{d}#1}
\let\div\relax
\DeclareMathOperator{\div}{div}
\newcommand{\norm}[2][]{#1\| #2 #1\|}
\newcommand{\set}[3][]{#1\{ #2 \, #1| \, #3 #1\}}
\newcommand{\scalarproduct}[3][]{#1\langle #2 \, , \, #3 #1\rangle}
\newcommand{\enorm}[2][]{#1|\!#1|\!#1| #2 #1|\!#1|\!#1|}
\newcommand{\jump}[1]{[\![#1]\!]}
\newcommand{\eqreff}[2]{\stackrel{\mathclap{\eqref{#1}}}{#2}}
\title{Adaptive FEM for parameter-errors in elliptic linear-quadratic parameter estimation problems}
\author{Roland Becker}
\address{Université de Pau et des Pays de l’Adour, IPRA-LMAP, Avenue de l’Université BP 1155, 64013 PAU Cedex, France}
\email{roland.becker@univ-pau.fr}
\author{Michael Innerberger}
\address{TU Wien, Institute for Analysis and Scientific Computing, Wiedner Hauptstr. 8-10/E101/4, 1040 Vienna, Austria}
\email{michael.innerberger@asc.tuwien.ac.at \quad \rm (corresponding author)}
\author{Dirk Praetorius}
\address{TU Wien, Institute for Analysis and Scientific Computing, Wiedner Hauptstr. 8-10/E101/4, 1040 Vienna, Austria}
\email{dirk.praetorius@asc.tuwien.ac.at}
\thanks{{\bf Acknowledgment.} The authors thankfully acknowledge support by the Austrian Science Fund (FWF) through the doctoral school \emph{Dissipation and dispersion in nonlinear PDEs} (grant W1245) and the SFB \emph{Taming complexity in partial differential systems} (grant SFB F65). MI further acknowledges partial support by the \emph{Institut Fran\c{c}ais d'Autriche} in the form of a travel grant.}
\begin{document}

\begin{abstract}
	We consider an elliptic linear-quadratic parameter estimation problem with a finite number of parameters.
	A novel \textsl{a priori} bound for the parameter error is proved and, based on this bound, an adaptive finite element method driven by an \textsl{a posteriori} error estimator is presented.
	Unlike prior results in the literature, our estimator, which is composed of standard energy error residual estimators for the state equation and suitable co-state problems, reflects the faster convergence of the parameter error compared to the (co)-state variables.
	We show optimal convergence rates of our method; in particular and unlike prior works, we prove that the estimator decreases with a rate that is the sum of the best approximation rates of the state and co-state variables.
	Experiments confirm that our method matches the convergence rate of the parameter error.
\end{abstract}

\maketitle


\section{Introduction}\label{sec:intro}

Many applications from science and engineering employ PDE models with a finite number of physical parameters.
As particular model problem, we look at a bounded Lipschitz domain $\Omega \subset \R^d$ with polygonal boundary $\Gamma := \partial \Omega$ for $d \in \N$ and consider
\begin{equation}
\label{eq:problem-strong}
	-\div(\A \nabla u(\p)) = f(\p)
	\quad \text{in } \Omega,
	\qquad
	u(\p) = 0
	\quad \text{on } \Gamma,
\end{equation}
where $A(x) \in \R_\mathrm{sym}^{d \times d}$ is a symmetric and uniformly positive definite matrix, and $f(\p) \in H^{-1}(\Omega) := H_0^1(\Omega)'$ depends linearly on some finite dimensional parameter $\p \in \QQ \subseteq \R^{n_{\QQ}}$ for some fixed $n_{\QQ} \in \N$.
We suppose that the parameter set $\QQ$ is convex and closed.

The parameter $\p$ for a particular application is typically unknown and usually determined by indirect measurements, e.g., thermal diffusion coefficients are determined by measuring temperatures of a sample at different locations.
Mathematically, these measurements are modeled by a vector-valued measurement operator $\G \colon H_0^1(\Omega) \to \CC := \R^{n_\CC}$ for $n_{\CC} \in \N$.
A parameter $\p^\star \in \QQ$ is then chosen such that the simulated measurements $\G(u(\p^\star))$ correspond to experimentally obtained measurements $\G^\star \in \CC$ in a least squares sense.
In this regard, our parameter estimation problem can be viewed as a special case of an optimal control problem with finite dimensional control.

To approximate efficiently the state $u(\p^\star)$, adaptive finite element methods (AFEM), based on local mesh refinement, are an important tool, since they optimize computational effort for a given accuracy; see, e.g., \cite{doerfler96,mns02,bdd04,stevenson07,ckns08,ffp14,axioms}.
Of particular interest are goal-oriented AFEMs, as they are able to achieve faster convergence if one is interested in functional values; see, e.g., \cite{ms09,bet11,fpz16}.

AFEMs have already been studied for numerical optimal control problems on several occasions.
In \cite{bm11, gy17}, adaptivity is driven by residual based \emph{a posteriori} estimators for the energy norm $\enorm{\cdot}$ of (co-)state variables, i.e.,
\begin{equation}
\label{eq:intro-traditional-estimate}
	\norm{\p^\star - \p_\coarse^\star}_{\QQ}
	+ \enorm{u(\p^\star) - u_\coarse(\p^\star_\coarse)}
	+ \enorm{z(\p^\star) - z_\coarse(\p^\star_\coarse)}
	\lesssim
	\eta_{\coarse}(u_{\ell}(\p^\star_\coarse))
	+ \zeta_{\coarse}(z_{\ell}(\p^\star_\coarse)),
\end{equation}
and optimality is shown with respect to these estimators.
However, while the sum of the proposed estimators is an upper bound of the parameter error, its rate of convergence is typically slower than that of the parameter error.
This issue is addressed in~\cite{lc17,gyz16} by replacing the energy norm $\enorm{\cdot}$ by the $L^2$-norm.
Indeed, this improves the convergence rate to match that of the parameter error, but requires strong regularity assumptions to the co-state problem, thus essentially allowing only for convex domains.
Other works (e.g., \cite{bv04, bv05}) focus on error estimators in the context of the dual-weighted residual method.
While the estimators therein are experimentally confirmed to match the convergence rate of the parameter error, these works lack a rigorous convergence analysis.

In this work, we extend the ideas of~\cite{bv04, bv05} and prove \textsl{a priori} and \textsl{a posteriori} error estimates for the parameter error alone, i.e.,
\begin{equation}
\label{eq:intro-our-estimate}
\begin{split}
	\norm{\p^\star - \p_\coarse^\star}_{\QQ}
	&\lesssim
	\Big[ \sum_{i=0}^{n_\QQ} \enorm{u_i - u_{\coarse,i}}^2 \Big]^{1/2}
	\Big[ \sum_{j=1}^{n_\CC} \norm{z_j - z_{\coarse,j}}^2 \Big]^{1/2}\\
	&\lesssim
	\Big[ \sum_{i=0}^{n_\QQ} \eta_{\coarse,i}(u_{\coarse,i})^2 \Big]^{1/2}
	\Big[ \sum_{j=1}^{n_\CC} \zeta_{\coarse,j}(z_{\coarse,j})^2 \Big]^{1/2},
\end{split}
\end{equation}
where $u_i$ and $z_j$ are linked to the state and co-state problem, respectively, and $\eta_{\coarse,i}$ and $\zeta_{\coarse,j}$ are residual-type estimators for energy errors.
Subsequently, this estimate is used to develop an AFEM driven by the \textsl{a posteriori} upper bound. 
Analogously to the dual-weighted residual estimator in~\cite{bv04,bv05}, we observe that the upper bound decays with the same rate as the parameter error.
Moreover, we prove that our AFEM algorithm yields convergence with optimal algebraic rates under much weaker assumptions than those in~\cite{gyz16,lc17}, bridging the gap between theory and practically observed convergence rates in our special situation.
In particular, we stress that optimal convergence rates in~\eqref{eq:intro-our-estimate} are generically quadratic when compared to the optimal rates in~\eqref{eq:intro-traditional-estimate} due to the product structure of the right-hand side in~\eqref{eq:intro-our-estimate} when compared to the additive structure in~\eqref{eq:intro-traditional-estimate}.

\subsection{Outline}
In Section~\ref{sec:problem}, we give details of the problem formulation and its numerical solution, for which we employ a discretization by the finite element method (FEM).
Error estimation as well as our adaptive algorithm are stated in Section~\ref{sec:results}, which also formulates our main results:
\begin{itemize}
	\item we give a novel \textsl{a priori} estimate for the parameter error (Theorem~\ref{th:parameter-apriori});
	\item our estimator is a reliable upper bound of the parameter error (Theorem~\ref{th:parameter-posteriori});
	\item our adaptive algorithm leads to linear convergence with optimal algebraic rates with respect to this estimator (Theorems~\ref{th:linear-convergence}~and~\ref{th:optimal-rates}).
\end{itemize}
The subsequent Sections~\ref{sec:errorbound} and~\ref{sec:optimalrates} are dedicated to the proofs of our main results.
Finally, we provide some numerical experiments in Section~\ref{sec:numerics}, which underline our theoretical results.

\subsection{Notation}
Throughout the paper, we use $a \lesssim b$ if there exists a constant $C > 0$ which is independent of the mesh-width such that $a \leq C\,b$.
We write $a \simeq b$ if $a \lesssim b$ and $b \lesssim a$.
The Euclidean norms on $\QQ$ and $\CC$ are denoted by $\norm{\cdot}_{\QQ}$ and$\norm{\cdot}_{\CC}$, respectively.
Finally, for a function $J \colon \HH_1 \to \HH_2$ between Hilbert spaces $\HH_1, \HH_2$, we denote the gradient and Hessian of $J$ at $v \in \HH_1$ by $J'[v] \colon \HH_1 \to \HH_2$ and $J''[v]  \colon \HH_1 \times \HH_1 \to \HH_2$, respectively.


\section{Parameter estimation problem}\label{sec:problem}

\subsection{Problem formulation}
We consider the linear elliptic PDE problem~\eqref{eq:problem-strong} in weak formulation:
For $\p \in \QQ$, find $u(\p) \in \XX := H_0^1(\Omega)$ such that
\begin{equation}
\label{eq:problem}
	a(u(\p),v)
	:=
	\int_\Omega	\A \nabla u(\p) \cdot \nabla v \d{x}
	=
	F_0(v) + b(\p, v)
	\quad
	\text{for all } v \in \XX.
\end{equation}
We suppose that $\A$ is a piecewise constant and positive definite matrix and that there exist $f_i \in L^2(\Omega)$ as well as $\f_i \in [L^2(\Omega)]^d$ for $i = 0, \ldots, n_\QQ$ such that
\begin{equation*}
	F_0(v) = \int_\Omega f_0 v - \f_0 \cdot \nabla v \d{x}
	\quad \text{and} \quad
	b(\p,v) = \sum_{i=1}^{n_\QQ} p_i \int_\Omega f_i v - \f_i \cdot \nabla v \d{x}
	\quad
	\text{for all } v \in \XX.
\end{equation*}
In particular, this implies that $b(\cdot, \cdot) \colon \QQ \times \XX \to \R$ is linear in both arguments and that, for all $\p \in \QQ$, $F_0, b(\p, \cdot) \in \XX' = H^{-1}(\Omega)$ are linear and continuous functionals on $\XX$.
Under these assumptions, the Lax--Milgram theory implies that problem~\eqref{eq:problem} has a unique solution $u(\p) \in \XX$, which is called \emph{state (variable)}, for all parameters $\p \in \QQ$.

\begin{remark}
	We note that our analysis below readily extends to more general problems, where
	\begin{equation*}
		a(u(\p),v)
		=
		\int_\Omega
		\A \nabla u(\p) \cdot \nabla v + \b \cdot \nabla u(\p) v + c \, u(\p) v
		\d{x},
	\end{equation*}
	and to mixed (inhomogeneous) Dirichlet / Neumann boundary data.
	For such problems, the obvious adaptions are to be made for solution theory and error estimation.
\end{remark}

We suppose that, for all $i = 1, \ldots, n_\CC$, the components $G_i \colon \XX \to \R$ of the measurement operator $\G \colon \XX \to \CC$ take the form
\begin{equation*}
	G_i(v) = \int_\Omega g_i v - \g_i \cdot \nabla v \d{x}
	\quad
	\text{for all } v \in \XX,
\end{equation*}
for some given $g_i \in L^2(\Omega)$ and $\g_i \in [L^2(\Omega)]^d$.

We seek a parameter $\p^\star \in \QQ$ such that the modeled measurements $\G(u(\p^\star))$ match the real measurements $\G^\star$ in some sense.
To this end, we define the residual with respect to the parameter $\p \in \QQ$ as
\begin{equation*}
	\r(\p)
	:= \G(u(\p)) - \G^\star
	\in \CC.
\end{equation*}
Allowing for an additional (regularization) constant $\alpha \geq 0$, we obtain the sought parameter as solution of the \emph{parameter problem}
\begin{equation}
\label{eq:ls-functional}
	J(\p)
	:=
	\frac{1}{2} \norm{\r(\p)}^2_{\CC} + \frac{\alpha}{2} \norm{\p}^2_{\QQ}
	\to \min
	\quad
	\text{in } \QQ,
\end{equation}
with the (regularized) least-squares functional $J$.
Note that, due to $\QQ$ being a closed and convex set and $J$ being quadratic (due to linearity of the residual $\r$), there exists a unique minimizer of~\eqref{eq:ls-functional}, which we call $\p^\star$.
In particular, the corresponding state $u(\p^\star) \in \XX$ also exists and is unique.

\begin{remark}	
	While the linear-quadratic parameter estimation problem~\eqref{eq:problem}--\eqref{eq:ls-functional} is interesting on its own account, we note that such problems also appear as linearization of nonlinear parameter estimation problems, e.g., in the course of a Gauss--Newton iteration.
	Nonlinear parameter estimation problems in the context of adaptive iterative linearized algorithms as presented, e.g., in~\cite{hpw2021} will be the subject of future work. 
\end{remark}

\subsection{Solution components}
Due to the linearity of~\eqref{eq:problem} with respect to the parameter $\p$, we can decompose the solution $u(\p)$ into components.
To this end, we denote by $\e_i \in \R^{n_\QQ}$ the $i$-th unit vector and define
\begin{equation}
\label{eq:primal-components}
\begin{split}
	u_0 \in \XX \colon
	\qquad
	a(u_0, v)
	&=
	F_0(v) \hphantom{b(\e_i, v)}
	\text{for all }
	v \in \XX,\\
	u_i \in \XX \colon
	\qquad
	a(u_i, v)
	&=
	b(\e_i, v) \hphantom{F_0(v)}
	\text{for all } v \in \XX
	\text{, and all } i = 1, \ldots, n_\QQ.
\end{split}
\end{equation}

Considering the solution $u(\p)$ to~\eqref{eq:problem} as a mapping $u \colon \QQ \to \XX$, we can compute the derivative $u' \colon \QQ \to L(\QQ, \XX)$ with respect to the parameter to see that, for every $\p,\q \in \QQ$, the function $u'(\p) := u'[\q](\p)$ is independent of the linearization point $\q$ and solves
\begin{equation}
\label{eq:uprime-problem}
	a(u'(\p), v) = b(\p, v)
	\qquad
	\text{for all } v \in \XX.
\end{equation}
Due to linearity of $b(\cdot, \cdot)$ in both arguments and $\p = \sum_{i=1}^{n_{\QQ}} p_i \e_i$, we have that
\begin{equation}
\label{eq:component-combination}
	u(\p)
	= u_0 + \sum_{i=1}^{n_\QQ} p_i u_i,
	\qquad
	u'(\p)
	= \sum_{i=1}^{n_\QQ} p_i u_i.
\end{equation}

\subsection{Least squares system and solution}
By the assumptions on $\QQ$ and quadraticity of $J \colon \QQ \to \R$, the problem~\eqref{eq:ls-functional} is a convex optimization problem.
The first-order necessary condition for the minimizer $\p^\star \in \QQ$ of \eqref{eq:ls-functional} is
\begin{equation}
\label{eq:first-order-condition}
	J'[\p^\star](\p-\p^\star) \geq 0
	\quad
	\text{for all } \p \in \QQ.
\end{equation}
Since the least squares functional is quadratic, the Hessian $J'' \in \R^{n_\QQ \times n_\QQ}$ is constant and, in particular, independent of the evaluation point.
We assume that there exists a constant $\kappa > 0$ such that the second-order sufficient condition 
\begin{equation}
\label{eq:locally-convex}
	J'' (\p,\p)
	\geq
	\kappa \norm{\p}_{\QQ}^2
	\quad
	\text{for all } \p \in \QQ
\end{equation}
holds, i.e., that every solution to~\eqref{eq:first-order-condition} is indeed a minimizer; see the general text~\cite{nocedal-wright} for existence and uniqueness of minimizers and Remark~\ref{rem:ls-solve}{\rm (ii)} below for the role of $\kappa$.

From linearity of the measurement functional $\G \colon \XX \to \CC$, we infer that $\G'[v](u'(\q)) = \G(u'(\q))$ for all $v \in \XX$ and $\q \in \QQ$.
Thus, it holds that
\begin{equation}
\label{eq:Jprime}
\begin{split}
	J'[\p^\star](\q)
	&=
	\scalarproduct[\big]{\r(\p^\star)}{\r'[\p^\star](\q)}_{\CC} + \alpha \scalarproduct{\p^\star}{\q}_{\QQ}\\
	&=
	\scalarproduct[\big]{\G(u(\p^\star)) - \G^\star}{\G(u'(\q))}_{\CC}
	+ \alpha \scalarproduct{\p^\star}{\q}_{\QQ}.
\end{split}
\end{equation}
Defining $\B \in \R^{n_\QQ \times n_\CC}$ by $\B_{ij} := G_j(u_i)$, we can use linearity in the last equation to obtain that
\begin{align}
\nonumber
	J'[\p^\star](\q)
	&=
	\sum_{i,j=1}^{n_\QQ} \sum_{k=1}^{n_\CC}
	\Big[
		\big( G_k(u_0) + p^\star_i G_k(u_i) - G^\star_k \big) \big( q_j G_k(u_j) \big)
	\Big]
	+ \alpha \scalarproduct{\p^\star}{\q}_{\QQ}\\
\label{eq:Jprime-explicit}
	&=
	\q^\intercal (\B \B^\intercal + \alpha \I) \p^\star + \q^\intercal \B (\G(u_0) - \G^\star).
\end{align}
From this representation, a solution to~\eqref{eq:first-order-condition} can be computed, heeding the constraints induced by $\QQ$; see~\cite{nocedal-wright} for a comprehensive treatment of algorithms for such (possibly non-linear) numerical optimization problems.

\begin{remark}
\label{rem:ls-solve}
	{\rm (i)}
	For the unconstrained case $\QQ = \R^{n_{\QQ}}$, solving the optimality condition~\eqref{eq:first-order-condition} simplifies to solving the (linear) least-squares system 
	\begin{equation*}
		(\B \B^\intercal + \alpha \I) \p^\star = \B (\G^\star - \G(u_0)).
	\end{equation*}
	
	{\rm (ii)}
	From the explicit representation
	\begin{equation}
	\label{eq:hessian-explicit}
		J''(\p,\p)
		=
		\p^\intercal \big( \B \B^\intercal + \alpha \I \big) \p,
	\end{equation}
	one infers two sufficient conditions such that \eqref{eq:locally-convex} holds:
	The first one is that the system matrix $\B \B^\intercal$ has full rank, in which case $\alpha = 0$ is an admissible choice.
	In particular, this requires $n_\CC \geq n_\QQ$, i.e., more measurements than parameters.
	The second condition is that $\alpha > 0$.
\end{remark}

\subsection{FEM discretization}
For a conforming triangulation  $\TT_\coarse$ of $\Omega \subset \R^d$ into compact simplices and a polynomial degree $k \ge 1$, let
\begin{equation*}
	\XX_\coarse
	:= 
	\set{v_\coarse \in H^1_0(\Omega)}{\forall T \in \TT_\coarse \colon ~~ v_\coarse|_T \text{ is a polynomial of degree } \le k}.
\end{equation*}
To obtain a conforming finite element approximation $u(\p) \approx u_\coarse(\p) \in \XX_\coarse$ for $\p \in \QQ$, we consider the Galerkin discretization of~\eqref{eq:problem}, which reads:
Find $u_\coarse(\p)$ such that
\begin{equation*}
\label{eq:discrete-problem}
	a(u_\coarse(\p), v_\coarse) = F_0(v_\coarse) + b(\p, v_\coarse)
	\quad
	\text{ for all } v_\coarse \in \XX_\coarse.
\end{equation*}
Moreover, the functions $u_{\coarse,0}, u_{\coarse,i} \in \XX_\coarse$ are defined analogously to~\eqref{eq:primal-components}, which is why~\eqref{eq:component-combination} holds accordingly.

On $\TT_\coarse$, an approximation of the continuous parameter $\p^\star$ can be obtained by minimizing a discretized version of the least-squares functional~\eqref{eq:ls-functional}:
\begin{equation}
\label{eq:discrete-ls-functional}
	J_\coarse(\p)
	:=
	\frac{1}{2} \norm{\r_\coarse(\p)}^2_{\CC} + \frac{\alpha}{2} \norm{\p}^2_{\QQ}
	\quad \text{with} \quad
	\r_\coarse(\p)
	:= \G(u_\coarse(\p)) - \G^\star.
\end{equation}
The minimizer $\p^\star_\coarse \in \QQ$ of the discrete least-squares functional satisfies
\begin{equation}
\label{eq:first-order-condition-discrete}
	J_\coarse'[\p^\star_\coarse](\p-\p^\star_\coarse) \geq 0
	\quad
	\text{for all } \p \in \QQ.
\end{equation}
We note that a discrete representation of~\eqref{eq:discrete-ls-functional} can be derived in complete analogy to~\eqref{eq:Jprime-explicit}, with $\B_{\coarse, ij} := G_j(u_{\coarse,i})$.

\subsection{Co-state components}
In the following adaptive algorithm and its analysis we also need information about the measurement operators.
To this end, we introduce the \emph{co-state components}
\begin{equation}
\label{eq:def-dual-components}
	z_j \in \XX \colon
	\qquad
	a(v, z_j)
	=
	G_j(v)
	\qquad
	\text{for all }
	v \in \XX, j = 1, \ldots, n_\CC
\end{equation}
and their discretizations $z_{\coarse,j} \in \XX_\coarse$.

To give a concise presentation of our analysis, we further define for $\p \in \QQ$ the functions
\begin{equation*}
\label{eq:dual-solution}
	z(\p)
	:=
	\sum_{j=1}^{n_\CC} r_{\coarse,j}(\p) z_j
	\quad \text{and} \quad
	z_\coarse(\p)
	:=
	\sum_{j=1}^{n_\CC} r_{\coarse,j}(\p) z_{\coarse,j}
\end{equation*}
and note that they satisfy
\begin{equation}
\label{eq:z-equation}
\begin{split}
	a(v, z(\p))
	&=
	\scalarproduct[\big]{\r_\coarse(\p)}{\G(v)}_{\CC}
	\quad
	\text{for all } v \in \XX,\\
	a(v_\coarse, z_\coarse(\p))
	&=
	\scalarproduct[\big]{\r_\coarse(\p)}{\G(v_\coarse)}_{\CC}
	\quad
	\text{for all } v_\coarse \in \XX_\coarse.
\end{split}
\end{equation}
Taking the derivative of the last equations with respect to the parameter, we obtain
\begin{equation}
\label{eq:zprime-equation}
\begin{split}
	a(v, z'(\p))
	&=
	\scalarproduct[\big]{\G(u_\coarse'(\p))}{\G(v)}_{\CC}
	\quad
	\text{for all } v \in \XX,\\
	a(v_\coarse, z'_\coarse(\p))
	&=
	\scalarproduct[\big]{\G(u_\coarse'(\p))}{\G(v_\coarse)}_{\CC}
	\quad
	\text{for all } v_\coarse \in \XX_\coarse
\end{split}
\end{equation}
and note that the following identities hold:
\begin{equation}
\label{eq:zprime-combination}
	z'_\coarse(\p)
	=
	\sum_{j=1}^{n_\CC} G_j \big( u'_\coarse(\p) \big) z_j
	\quad \text{and} \quad
	z'_\coarse(\p)
	=
	\sum_{j=1}^{n_\CC} G_j \big( u'_\coarse(\p) \big) z_{\coarse,j}.
\end{equation}

\begin{remark}
	By considering the co-state components $z_{\coarse, j}$ from~\eqref{eq:def-dual-components}, the matrix entries $\B_{\coarse, ij}$ can be computed by the $n_\QQ$ state components $u_{\coarse, i}$, or the $n_\CC$ co-state components $z_{\coarse, j}$ via
	\begin{equation}
	\label{eq:discrete-lsmatrix}
		\B_{\coarse, ij}
		=
		G_j(u_{\coarse, i})
		\eqreff{eq:def-dual-components}{=}
		a(u_{\coarse,i}, z_j)
		=
		a(u_{\coarse,i}, z_{\coarse,j})
		\eqreff{eq:primal-components}{=}
		b(\e_i, z_{\coarse, j}).
	\end{equation}
	Thus, for assembling $\B_\coarse$, one can decide between computing state or co-state components.
	For our adaptive algorithm below, however, we need the state as well as the co-state components anyway to compute the necessary a posteriori estimators.
\end{remark}


\section{Adaptive algorithm and main results}\label{sec:results}

\subsection{A priori estimate}
Our first main result is an \textsl{a priori} estimate for the parameter error.
To the best of our knowledge, this is a novel result. 
Its proof is given in Section~\ref{sec:errorbound} below.
\begin{theorem}
	\label{th:parameter-apriori}
	There exists a constant $C_{\QQ} > 0$ such that
	\begin{equation}
	\label{eq:parameter-apriori}
		\norm{\p^\star - \p^\star_\coarse}_{\QQ}
		\leq
		C_{\QQ} \, \Big[ \sum_{i=0}^{n_\QQ} \enorm{u_i - u_{\coarse,i}}^2 \Big]^{1/2}
		\Big[ \sum_{j=1}^{n_\CC} \norm{z_j - z_{\coarse,j}}^2 \Big]^{1/2}
		\quad
		\text{for all } \TT_\coarse \in \T.
	\end{equation}
	The constant $C_{\QQ}$ depends only on $\QQ$, $\Omega$, $\G^\star$, $\kappa$, $\alpha$, and the data $\A$, $f_i$, $g_j$, $\f_i$, and $\g_j$.
\end{theorem}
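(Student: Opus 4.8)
The plan is to reduce the parameter error to a consistency term by exploiting strong convexity of the quadratic functional $J$, and then to bound that term by an Aubin--Nitsche duality argument; the product structure in~\eqref{eq:parameter-apriori} will emerge because, after Galerkin orthogonality, each entry of $\B - \B_\coarse$ factors into an energy error of a state component times an energy error of a co-state component.

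First, since $J$ is quadratic with constant Hessian, $\big(J'[\p^\star] - J'[\p^\star_\coarse]\big)(\p^\star - \p^\star_\coarse) = J''(\p^\star - \p^\star_\coarse, \p^\star - \p^\star_\coarse) \ge \kappa\,\norm{\p^\star - \p^\star_\coarse}_\QQ^2$ by~\eqref{eq:locally-convex} (which, by~\eqref{eq:hessian-explicit} and Remark~\ref{rem:ls-solve}{\rm (ii)}, holds for all arguments in $\R^{n_\QQ}$). Testing the continuous variational inequality~\eqref{eq:first-order-condition} at the admissible parameter $\p^\star_\coarse \in \QQ$ and the discrete one~\eqref{eq:first-order-condition-discrete} at the admissible parameter $\p^\star \in \QQ$ (here convexity of $\QQ$ is essential), I arrive at
\begin{equation*}
	\kappa\,\norm{\p^\star - \p^\star_\coarse}_\QQ^2
	\;\le\;
	\big(J'[\p^\star] - J'[\p^\star_\coarse]\big)(\p^\star - \p^\star_\coarse)
	\;\le\;
	\big(J_\coarse'[\p^\star_\coarse] - J'[\p^\star_\coarse]\big)(\p^\star - \p^\star_\coarse).
\end{equation*}

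Next, I would insert the explicit representation~\eqref{eq:Jprime-explicit} of $J'[\cdot]$ and its discrete analogue, obtained by replacing $\B, u_0$ by $\B_\coarse, u_{\coarse,0}$ (both hold at any evaluation point since $J$ and $J_\coarse$ are quadratic). The regularization contributions cancel, and after inserting intermediate terms the right-hand side is a finite sum each of whose summands carries a factor $\B - \B_\coarse$ or $\G(u_0 - u_{\coarse,0})$, the remaining factors being among $\p^\star_\coarse$, $\B$, $\B_\coarse$, $\G(u_0)$ and $\G^\star$. The decisive step is the duality identity: by~\eqref{eq:discrete-lsmatrix} and Galerkin orthogonality for the state components~\eqref{eq:primal-components} \emph{and} the co-state components~\eqref{eq:def-dual-components},
\begin{equation*}
	(\B - \B_\coarse)_{ij}
	= G_j(u_i - u_{\coarse,i})
	= a(u_i - u_{\coarse,i},\, z_j)
	= a(u_i - u_{\coarse,i},\, z_j - z_{\coarse,j}),
\end{equation*}
so that $|(\B - \B_\coarse)_{ij}| \le \enorm{u_i - u_{\coarse,i}}\,\enorm{z_j - z_{\coarse,j}}$ by symmetry of $a(\cdot,\cdot)$; summing the squares over $i = 1, \dots, n_\QQ$ and $j = 1, \dots, n_\CC$ yields a product bound for $\norm{\B - \B_\coarse}$. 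The same computation with $u_0$ in place of $u_i$ bounds $\norm{\G(u_0 - u_{\coarse,0})}_\CC$ by $\enorm{u_0 - u_{\coarse,0}}\,\big[\sum_{j=1}^{n_\CC} \enorm{z_j - z_{\coarse,j}}^2\big]^{1/2}$, which is precisely why the state sum in~\eqref{eq:parameter-apriori} starts at $i = 0$.

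It then remains to bound the surviving factors uniformly over $\TT_\coarse \in \T$: the entries of $\B, \B_\coarse$ and of $\G(u_0), \G^\star$ are controlled through $\enorm{u_i}$, C\'ea's estimate $\enorm{u_{\coarse,i}} \le \enorm{u_i}$, and the dual norms of the $G_j$, hence by $\A$, $\Omega$, $\G^\star$ and the data $f_i, \f_i, g_j, \g_j$; moreover $\norm{\p^\star_\coarse}_\QQ \lesssim 1$ because $J_\coarse(\p^\star_\coarse) \le J_\coarse(\p^\star) \lesssim 1$ together with the coercivity underlying~\eqref{eq:locally-convex} (in the case $\alpha = 0$ one additionally uses that $\B_\coarse\B_\coarse^\intercal$ is uniformly positive definite on $\T$, which follows from $\B_\coarse \to \B$ under refinement). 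Collecting these bounds with the estimate from the first step gives
\begin{equation*}
	\kappa\,\norm{\p^\star - \p^\star_\coarse}_\QQ^2
	\lesssim
	\norm{\p^\star - \p^\star_\coarse}_\QQ\,
	\Big[ \sum_{i=0}^{n_\QQ} \enorm{u_i - u_{\coarse,i}}^2 \Big]^{1/2}
	\Big[ \sum_{j=1}^{n_\CC} \enorm{z_j - z_{\coarse,j}}^2 \Big]^{1/2},
\end{equation*}
and dividing by $\norm{\p^\star - \p^\star_\coarse}_\QQ$ (the estimate being trivial otherwise) yields~\eqref{eq:parameter-apriori}, while tracking the constants produces the stated dependence of $C_\QQ$. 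I expect the only genuinely delicate point to be the uniform bound on $\norm{\p^\star_\coarse}_\QQ$ in the unregularized case with unbounded $\QQ$; everything else is bookkeeping with the triangle inequality, Cauchy--Schwarz and Galerkin orthogonality. A fully equivalent, matrix-free variant expands $\big(J_\coarse'[\p^\star_\coarse] - J'[\p^\star_\coarse]\big)(\p^\star - \p^\star_\coarse)$ directly from~\eqref{eq:Jprime}, splits off $\G\big(u(\p^\star_\coarse) - u_\coarse(\p^\star_\coarse)\big)$ and $\G\big(u'(\p^\star - \p^\star_\coarse) - u_\coarse'(\p^\star - \p^\star_\coarse)\big)$ via~\eqref{eq:component-combination}, and applies the same duality componentwise.
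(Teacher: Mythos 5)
Your argument is correct and shares the paper's skeleton --- strong convexity of $J$ plus the two variational inequalities \eqref{eq:first-order-condition} and \eqref{eq:first-order-condition-discrete} reduce everything to the consistency term $\big(J'_\coarse[\p^\star_\coarse]-J'[\p^\star_\coarse]\big)(\p^\star-\p^\star_\coarse)$, exactly as in Lemma~\ref{lemma:error-parameter} --- but you execute the consistency estimate differently. The paper works at the function level: Lemma~\ref{lemma:error-Jderivative} inserts the weighted co-states $z(\p)$, $z'(\q)$ and uses Galerkin orthogonality there, then Lemmas~\ref{lemma:derivative-estimate} and~\ref{lemma:uz-estimate} split off $\norm{\q}_\QQ$ and bound the remaining factors; this produces an extra higher-order term $\sum_j\enorm{z_j-z_{\coarse,j}}^2$ that must be absorbed via the crude bound \eqref{eq:rough-estimate}. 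You instead work at the matrix level via \eqref{eq:Jprime-explicit}, writing $\B_\coarse\B_\coarse^\intercal-\B\B^\intercal=(\B_\coarse-\B)\B_\coarse^\intercal+\B(\B_\coarse-\B)^\intercal$ and applying the componentwise duality $(\B-\B_\coarse)_{ij}=a(u_i-u_{\coarse,i},z_j-z_{\coarse,j})$; this linear splitting avoids the higher-order term altogether and makes the bookkeeping arguably cleaner, at the price of being tied to the finite-dimensional matrix representation (the paper's function-level version is the one that generalizes, cf.\ its remark on nonlinear problems). The two routes rely on the same two key mechanisms and both require the uniform bound $\norm{\p^\star_\coarse}_\QQ\lesssim 1$ of Lemma~\ref{lemma:apriori-p}.

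The one point to tighten is precisely that uniform bound in the case $\alpha=0$. Your suggested justification --- that $\B_\coarse\B_\coarse^\intercal$ is uniformly positive definite on $\T$ because $\B_\coarse\to\B$ under refinement --- does not hold as stated: $\T$ contains arbitrarily coarse meshes (including $\TT_0$ itself), on which $\B_\coarse\B_\coarse^\intercal$ may well be singular even though $\B\B^\intercal$ is not, and $\lambda_{\min}(\B_\coarse\B_\coarse^\intercal)$ is not monotone under refinement. What is actually needed is a discrete analogue of \eqref{eq:locally-convex} with a $\kappa$ uniform over $\TT_\coarse\in\T$ (or $\alpha>0$). Note that the paper's own Lemma~\ref{lemma:apriori-p} asserts that the discrete bound ``follows analogously'' and thereby makes the same implicit assumption, so this is a shared delicate point rather than a defect specific to your argument --- but your proposed fix should not be presented as a consequence of convergence of $\B_\coarse$.
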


\begin{remark}
	The proof of the estimate~\eqref{eq:parameter-apriori} relies heavily on the linear dependence of the primal and dual solution on the parameter, as well as the finite dimension of $\QQ$.
	Problems which depend on the parameter in a nonlinear fashion are usually solved iteratively, such that the linearization in each step again depends linearly on the parameter.
	Thus, a result analogous to Theorem~\ref{th:parameter-apriori} might also hold in this case.
	For optimal control problems with $\dim \QQ = \infty$, however, it is not clear how to replace the components $u_i$ and $z_j$ as well as their discretizations in~\eqref{eq:parameter-apriori}.
\end{remark}
The estimate~\eqref{eq:parameter-apriori} is the fundamental result that allows to design the adaptive algorithm that is presented in the following sections.

\subsection{Mesh refinement}
Let $\TT_0$ be a given conforming triangulation of $\Omega$ which is admissible for $d \geq 3$ in the sense of~\cite{stevenson08}.
For mesh refinement, we employ newest vertex bisection; see~\cite{bdd04,stevenson08,kpp13}.
For each conforming triangulation $\TT_\coarse$ and marked elements $\MM_\coarse \subseteq \TT_\coarse$, let $\TT_\fine := \refine(\TT_\coarse,\MM_\coarse)$ be the coarsest conforming triangulation where all $T \in \MM_\coarse$ have been refined, i.e., $\MM_\coarse \subseteq \TT_\coarse \backslash \TT_\fine$.
We write $\TT_\fine \in \T(\TT_\coarse)$, if $\TT_\fine$ results from $\TT_\coarse$ by finitely many steps of refinement.
To abbreviate notation, let $\T:=\T(\TT_0)$.
We note that there holds nestedness of finite element spaces, i.e., $\TT_\fine \in \T(\TT_\coarse)$ implies that $\XX_\coarse \subseteq \XX_\fine$.

We note that newest vertex bisection generates a family of shape-regular meshes, i.e., there exists a constant $\gamma > 0$ such that
\begin{equation*}
	\sup_{\TT \in \T} \underset{T \in \TT}{\max\vphantom{\sup}} \frac{\mathrm{diam}(T)^d}{|T|}
	\leq
	\gamma
	<
	\infty.
\end{equation*}

\subsection{A posteriori error estimation}
We consider standard residual error estimators, i.e., for $\TT_\coarse \in \T$, $T \in \TT_\coarse$, and $v_\coarse \in \XX_\coarse$ we define
\begin{equation*}
\begin{split}
	\eta_{\coarse,i}(T, v_\coarse)^2
	&:=
	h_T^2 \norm[\big]{f_i + \div( \f_i + \A \nabla v_\coarse )}_{L^2(T)}^2
	+ h_T \norm[\big]{\jump{( \f_i + \A \nabla v_\coarse ) \cdot \normalvec}}_{L^2(\partial T \cap \Omega)}^2,\\
	\zeta_{\coarse,j}(T, v_\coarse)^2
	&:=
	h_T^2 \norm[\big]{g_j + \div( \g_j + \A \nabla v_\coarse )}_{L^2(T)}^2
	+ h_T \norm[\big]{\jump{( \g_j + \A \nabla v_\coarse ) \cdot \normalvec}}_{L^2(\partial T \cap \Omega)}^2,
\end{split}
\end{equation*}
for all $0 \leq i \leq n_{\QQ}$ and $1 \leq j \leq n_{\CC}$, where $h_T := |T|^{1/d}$ and $\jump{\cdot}$ is the jump across element boundaries.
For a subset $\UU_\coarse \subseteq \TT_\coarse$, we further define
\begin{equation}
\label{eq:abbreviations-1}
	\eta_{\coarse,i}(\UU_\coarse, v_\coarse)^2
	:=
	\sum_{T \in \UU_\coarse} \eta_{\coarse,i}(T, v_\coarse)^2,
	\qquad
	\zeta_{\coarse,j}(\UU_\coarse, v_\coarse)^2
	:=
	\sum_{T \in \UU_\coarse} \zeta_{\coarse,j}(T, v_\coarse)^2,
\end{equation}
and we abbreviate
\begin{equation}
\label{eq:abbreviations-2}
\begin{split}
	\eta_{\coarse,i}(v_\coarse)
	&:=
	\eta_{\coarse,i}(\TT_\coarse, v_\coarse),
	\quad \hphantom{\zeta_{\coarse,j}}
	\eta_{\coarse,i}(\UU_\coarse)
	:=
	\eta_{\coarse,i}(\UU_\coarse, u_{\coarse,i}),
	\quad \hphantom{\zeta_{\coarse,j}}
	\eta_{\coarse,i}
	:=
	\eta_{\coarse,i}(\TT_\coarse),\\
	\zeta_{\coarse,j}(v_\coarse)
	&:=
	\zeta_{\coarse,j}(\TT_\coarse, v_\coarse),
	\quad \hphantom{\eta_{\coarse,i}}
	\zeta_{\coarse,j}(\UU_\coarse)
	:=
	\zeta_{\coarse,j}(\UU_\coarse, z_{\coarse,j}),
	\quad \hphantom{\eta_{\coarse,i}}
	\zeta_{\coarse,j}
	:=
	\zeta_{\coarse,j}(\TT_\coarse).
\end{split}
\end{equation}

It is well-known that, in our setting, these estimators satisfy the so-called \emph{axioms of adaptivity}; see, e.g., \cite{axioms}.

\begin{proposition}
\label{prop:axioms}
	There exist constants $\Cstab, \Crel, \Cdrel > 0$ and $0 < \qred < 1$ such that for all $\TT_\coarse \in \T(\TT_0)$, $\TT_\fine \in \T(\TT_\coarse)$, $0 \leq i \leq n_{\QQ}$, and $1 \leq j \leq n_{\CC}$ the residual error estimators satisfy the following properties:
	
	\renewcommand{\theenumi}{{A\arabic{enumi}}}
	\begin{enumerate}
		\bf
		\item\label{assumption:stab} Stability:
		\rm
		For all $v_\fine \in \XX_\fine$, $v_\coarse \in \XX_\coarse$, and $\UU_\coarse \subseteq \TT_\fine \cap \TT_\coarse$, it holds that
		\begin{equation*}
			\big| \eta_{\fine,i}(\UU_\coarse, v_\fine) - \eta_{\coarse,i}(\UU_\coarse, v_\coarse) \big|
			+ \big| \zeta_{\fine,j}(\UU_\coarse, v_\fine) - \zeta_{\coarse,j}(\UU_\coarse, v_\coarse) \big|
			\leq
			\Cstab \, \enorm{v_\fine - v_\coarse}.
		\end{equation*}
		
		\bf
		\item\label{assumption:red} Reduction:
		\rm
		For all $v_\coarse \in \XX_\coarse$, it holds that
		\begin{equation*}
			\eta_{\fine,i}(\TT_\fine \backslash \TT_\coarse, v_\coarse)
			\leq
			\qred \, \eta_{\coarse,i}(\TT_\coarse \backslash \TT_\fine, v_\coarse)
			\quad \text{ and } \quad
			\zeta_{\fine,j}(\TT_\fine \backslash \TT_\coarse, v_\coarse)
			\leq
			\qred \, \zeta_{\coarse,j}(\TT_\coarse \backslash \TT_\fine, v_\coarse).
		\end{equation*}
		
		\bf
		\item\label{assumption:rel} Reliability:
		\rm
		The state and co-state components $u_{\coarse,i}, z_{\coarse,j} \in \XX_\coarse$ satisfy that
		\begin{equation*}
			\enorm{u_i - u_{\coarse,i}}
			\leq
			\Crel \, \eta_{\coarse,i}
			\quad \text{ and } \quad
			\enorm{z_j - z_{\coarse,j}}
			\leq
			\Crel \, \zeta_{\coarse,j}.
		\end{equation*}
		
		\bf
		\item\label{assumption:drel} Discrete reliability:
		\rm
		The components $u_{\coarse,i}, z_{\coarse,j} \in \XX_\coarse$ and $u_{\fine,i}, z_{\fine,j} \in \XX_\fine$ satisfy
		\begin{equation*}
			\enorm{u_{\fine,i} - u_{\coarse,i}}
			\leq
			\Cdrel \, \eta_{\coarse,i}(\TT_\coarse \backslash \TT_\fine)
			\quad \text{ and } \quad
			\enorm{z_{\fine,j} - z_{\coarse,j}}
			\leq
			\Cdrel \, \zeta_{\coarse,j}(\TT_\coarse \backslash \TT_\fine).
			\tag*{\qed}
		\end{equation*}
	\end{enumerate}
\end{proposition}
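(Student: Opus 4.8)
The goal is to prove Proposition~\ref{prop:axioms}, which asserts that the standard residual error estimators $\eta_{\coarse,i}$ and $\zeta_{\coarse,j}$ satisfy the four axioms of adaptivity (stability, reduction, reliability, discrete reliability). Since these are entirely standard properties of residual estimators for the Poisson-type problem $a(\cdot,\cdot)$, the plan is to reduce everything to the known theory and spell out only the bookkeeping specific to our setting.

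First I would observe that each component $u_i$ (for $0 \le i \le n_\QQ$) solves a linear elliptic problem with the \emph{same} bilinear form $a(\cdot,\cdot)$ but different right-hand side data: $u_0$ uses the data $(f_0,\f_0)$ while $u_i$ for $i \ge 1$ uses $(f_i,\f_i)$, cf.~\eqref{eq:primal-components}. Likewise, by~\eqref{eq:def-dual-components}, each co-state component $z_j$ solves $a(v,z_j) = G_j(v)$, which — since $a(\cdot,\cdot)$ is symmetric (as $\A$ is symmetric) — is again an elliptic problem of the same type with data $(g_j,\g_j)$. Thus $\eta_{\coarse,i}$ is the canonical residual estimator for the pair $(u_i, u_{\coarse,i})$ and $\zeta_{\coarse,j}$ the canonical residual estimator for $(z_j, z_{\coarse,j})$; in both cases the estimator is built from the element residual $h_T^2\|\text{(source)} + \div(\text{(flux)} + \A\nabla v_\coarse)\|_{L^2(T)}^2$ and the normal-jump term $h_T\|\jump{(\text{(flux)}+\A\nabla v_\coarse)\cdot\normalvec}\|_{L^2(\partial T \cap \Omega)}^2$, which is exactly the setting covered in, e.g.,~\cite{ckns08,axioms}.

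With this identification, the four axioms follow verbatim from the literature. For \ref{assumption:stab} (stability), the difference of two estimators on a common set of elements $\UU_\coarse \subseteq \TT_\fine \cap \TT_\coarse$ is controlled by $\enorm{v_\fine - v_\coarse}$ via the triangle inequality in the $L^2$-residual terms, inverse estimates, and a scaled trace inequality — the constant $\Cstab$ depends only on shape-regularity $\gamma$, the dimension $d$, the polynomial degree $k$, and the bounds on $\A$. For \ref{assumption:red} (reduction), one uses that each element of $\TT_\fine \setminus \TT_\coarse$ is contained in a refined parent from $\TT_\coarse \setminus \TT_\fine$ with at least halved measure, so $h_T$ shrinks by a factor $2^{-1/d}$; combined with the fact that $\div(\f + \A\nabla v_\coarse)$ and the jump term are unchanged when $v_\coarse$ is kept fixed, this yields the contraction factor $\qred := 2^{-1/(2d)} < 1$ (after absorbing the mesh-size factors). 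For \ref{assumption:rel} (reliability), the residual $\enorm{u_i - u_{\coarse,i}}$ is estimated by a Galerkin-orthogonality/duality argument together with a quasi-interpolation (Scott--Zhang) operator and local approximation estimates; the same argument applied to the co-state problem (using symmetry of $a$) gives $\enorm{z_j - z_{\coarse,j}} \le \Crel\,\zeta_{\coarse,j}$. For \ref{assumption:drel} (discrete reliability), the identical argument is localized to the refined region $\TT_\coarse \setminus \TT_\fine$, using that $u_{\fine,i} - u_{\coarse,i}$ is discretely harmonic away from refined elements, so the quasi-interpolation of this discrete error can be chosen to vanish outside a neighbourhood of $\TT_\coarse \setminus \TT_\fine$.

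There is no real obstacle here; the only thing to be careful about is the precise statement that all four properties hold with constants \emph{uniform in $i$ and $j$}, which is immediate since there are only finitely many components $(n_\QQ + 1$ primal and $n_\CC$ dual) and each constant depends continuously on the (fixed) data $(f_i,\f_i,g_j,\g_j)$ and on the mesh-parameter bounds $\gamma$, $d$, $k$, $\A$ — so one simply takes the maximum over the finitely many indices. Hence I would present the proof as: (1) identify each component equation as a symmetric elliptic problem of the standard form; (2) invoke the standard verification of \ref{assumption:stab}--\ref{assumption:drel} for residual estimators (citing~\cite{ckns08,axioms}), noting that the symmetry of $\A$ makes the co-state problem of exactly the same type; (3) take the maximum of the finitely many constants to obtain uniform $\Cstab,\Crel,\Cdrel,\qred$.
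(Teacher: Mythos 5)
Your proposal is correct and takes essentially the same route as the paper, which offers no written proof at all: it simply remarks that the estimators satisfy the well-known axioms of adaptivity and cites \cite{axioms}, exactly the reduction to standard residual-estimator theory that you carry out. Your added bookkeeping (each state and co-state component solves a symmetric elliptic problem with the same bilinear form and fixed data, and the finitely many constants are maximized over $i$ and $j$) is precisely the justification the paper leaves implicit.
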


Together with our \textsl{a priori} estimate Theorem~\ref{th:parameter-apriori}, reliability~\eqref{assumption:rel} immediately implies that a suitable combination of error estimators of the components is indeed an upper bound to the parameter error.
\begin{theorem}
\label{th:parameter-posteriori}
	With the constant $C_{\QQ} > 0$ from Theorem~\ref{th:parameter-apriori}, there holds
	\begin{equation}
	\label{eq:parameter-posteriori}
		\norm{\p^\star - \p^\star_\coarse}_{\QQ}
		\leq
		\Crel^2 \, C_{\QQ} \, \Big[ \sum_{i=0}^{n_\QQ} \eta_{\coarse,i}^2 \Big]^{1/2}
		\Big[ \sum_{j=1}^{n_\CC} \zeta_{\coarse,j}^2 \Big]^{1/2}
		\quad
		\text{for all } \TT_\coarse \in \T.
	\end{equation}
\end{theorem}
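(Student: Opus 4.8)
The proof of Theorem~\ref{th:parameter-posteriori} is essentially immediate once Theorem~\ref{th:parameter-apriori} and the reliability estimate~\eqref{assumption:rel} are available, so the plan is simply to chain the two together. The starting point is the \textsl{a priori} bound~\eqref{eq:parameter-apriori}, which controls the parameter error $\norm{\p^\star - \p^\star_\coarse}_{\QQ}$ by the product of the two square-root sums $\big[\sum_{i=0}^{n_\QQ} \enorm{u_i - u_{\coarse,i}}^2\big]^{1/2}$ and $\big[\sum_{j=1}^{n_\CC} \norm{z_j - z_{\coarse,j}}^2\big]^{1/2}$, with constant $C_{\QQ}$.

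First I would bound each factor separately. For the first factor, I apply the reliability estimate~\eqref{assumption:rel} for the state components, $\enorm{u_i - u_{\coarse,i}} \leq \Crel \, \eta_{\coarse,i}$, termwise for $i = 0, \ldots, n_\QQ$; squaring, summing, and taking the square root pulls the constant $\Crel$ out of the sum and yields $\big[\sum_{i=0}^{n_\QQ} \enorm{u_i - u_{\coarse,i}}^2\big]^{1/2} \leq \Crel \, \big[\sum_{i=0}^{n_\QQ} \eta_{\coarse,i}^2\big]^{1/2}$. For the second factor, I use the co-state reliability $\enorm{z_j - z_{\coarse,j}} \leq \Crel \, \zeta_{\coarse,j}$; here one should note that the norm $\norm{\cdot}$ appearing in~\eqref{eq:parameter-apriori} is the energy norm $\enorm{\cdot}$ (this is how the co-state components are measured throughout the reliability axiom), so the same termwise argument gives $\big[\sum_{j=1}^{n_\CC} \norm{z_j - z_{\coarse,j}}^2\big]^{1/2} \leq \Crel \, \big[\sum_{j=1}^{n_\CC} \zeta_{\coarse,j}^2\big]^{1/2}$.

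Multiplying these two bounds and inserting into~\eqref{eq:parameter-apriori} produces exactly~\eqref{eq:parameter-posteriori} with the combined constant $\Crel^2 \, C_{\QQ}$, valid for every $\TT_\coarse \in \T$ since both input estimates hold for all such triangulations. There is really no obstacle here; the only point requiring a moment's care is the bookkeeping of which norm is meant on the co-state side in~\eqref{eq:parameter-apriori} and confirming that the exponent on $\Crel$ is indeed $2$ rather than $1$ (one factor of $\Crel$ from the state estimators, one from the co-state estimators). If one wanted to be slightly more careful about potential regularity mismatches between $\norm{\cdot}$ and $\enorm{\cdot}$, one could instead read~\eqref{eq:parameter-apriori} as already phrased in the energy norm — which is consistent with the statement's dependence of $C_{\QQ}$ only on $\A$ and the domain data — so no Poincaré- or norm-equivalence constants enter beyond what is already absorbed in $C_{\QQ}$.
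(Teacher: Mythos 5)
Your proposal is correct and matches the paper's argument exactly: the paper also obtains \eqref{eq:parameter-posteriori} by applying the reliability estimate \eqref{assumption:rel} termwise to both factors of the \textsl{a priori} bound \eqref{eq:parameter-apriori}, yielding one factor of $\Crel$ from the state sum and one from the co-state sum. Your side remark that the norm on the co-state factor in \eqref{eq:parameter-apriori} is to be read as the energy norm is also the intended reading.
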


\begin{remark}
	The usual (residual) a posteriori estimate for optimal control problems in the literature is an upper bound for the sum
	\begin{equation*}
		\norm{\p^\star - \p^\star_\coarse}_{\QQ}
		+ \enorm{u(\p^\star_\coarse) - u_\coarse(\p^\star_\coarse)}
		+ \enorm{z(\p^\star_\coarse) - z_\coarse(\p^\star_\coarse)}.
	\end{equation*}
	Since the error in the parameter can be expected to be of higher order than that of the state and the co-state variable, such an estimate is sub-optimal with respect to the parameter error.
	Our estimate in~\eqref{eq:error-parameter} neatly exploits the finite dimension of $\QQ$ to gain two advantages:
	\begin{itemize}
		\item it shows an improved rate of convergence compared to the usual estimator for the sum above (see the numerical experiments in Section~\ref{sec:numerics});
		
		\item it does not use $\p^\star_\coarse$, so that its (possibly costly) computation can be avoided, if one is only interested in an a posteriori estimate.
		For instance, one can improve the mesh by use of Algorithm~\ref{algorithm} below until the upper bound in~\eqref{eq:parameter-posteriori} is sufficiently small.
		Then, $\p_\coarse^\star$ is only computed once for the final mesh.
	\end{itemize}
\end{remark}

\subsection{Adaptive algorithm}
Once Theorem~\ref{th:parameter-posteriori} provides an error estimator for the parameter error on a mesh $\TT_\coarse \in \T$ involving only (co-)state components, we can define weighted refinement indicators in the spirit of~\cite{bet11}:
\begin{equation}
\label{eq:weighted-estimator}
	\varrho_\coarse(T)^2
	:=
	\Big[ \sum_{i=0}^{n_{\QQ}} \eta_{\coarse,i}^2 \Big]
	\Big[ \sum_{j=1}^{n_{\CC}} \zeta_{\coarse,j}(T)^2 \Big]
	+
	\Big[ \sum_{i=0}^{n_{\QQ}} \eta_{\coarse,i}(T)^2 \Big]
	\Big[ \sum_{j=1}^{n_{\CC}} \zeta_{\coarse,j}^2 \Big].
\end{equation}
For a subset $\UU_\coarse \subseteq \TT_\coarse$, we again define $\varrho_\coarse(\UU_\coarse)^2 := \sum_{T \in \UU_\coarse} \varrho_{\coarse}(T)^2$ and $\varrho_\coarse := \varrho_\coarse(\TT_\coarse)$.

This allows us to devise an adaptive algorithm for the parameter estimation problem.
\begin{algorithm}
\label{algorithm}
	\textbf{Input:} Initial triangulation $\TT_0$, marking parameter $\theta \in (0,1]$.\\
	For $\ell = 0, 1, 2, \ldots$, do
	\begin{enumerate}[label={\rm (\roman*)}]
		\item Compute $u_{\ell,i}$ and $z_{\ell,j}$ for $i = 0, \ldots, n_\QQ$ and $j = 1, \ldots, n_\CC$.
		
		\item Compute refinement indicators $\varrho_\ell(T)$.
		
		\item Find a minimal set $\MM_\ell \subseteq \TT_\ell$ of elements such that $\varrho_\ell(\MM_\ell)^2 \geq \theta \varrho_\ell^2$.
		
		\item Compute $\TT_{\ell+1} := \refine(\TT_\ell, \MM_\ell)$.
	\end{enumerate}
	\textbf{Output:} Sequence of triangulations $(\TT_\ell)_\ell$, components $(u_{\ell,i})_\ell, (z_{\ell,j})_\ell$, and corresponding error estimators.
\end{algorithm}

\begin{remark}
	\label{rem:upper-bound}
	We note that the proposed weighted error estimator~\eqref{eq:weighted-estimator} satisfies
	\begin{equation}
	\label{eq:estimator-equivalence}
		\varrho_\coarse^2
		=
		2 \, \Big[ \sum_{i=0}^{n_{\QQ}} \eta_{\coarse,i}^2 \Big]
		\Big[ \sum_{j=1}^{n_{\CC}} \zeta_{\coarse,j}^2 \Big],
	\end{equation}
	which is essentially the square of the upper bound of the parameter error~\eqref{eq:parameter-posteriori} from Theorem~\ref{th:parameter-posteriori}.
	Thus, by proving convergence (with optimal rates) of Algorithm~\ref{algorithm} with respect to the weighted estimator $\varrho_\coarse$, we can draw the same conclusion for the parameter error.
\end{remark}

\begin{remark}
	Note that computing the parameter $\p^\star_\coarse$ from~\eqref{eq:first-order-condition-discrete} can involve high computational effort, depending on the precise constraints imposed by $\QQ$.
	For this reason, we stress that Algorithm~\ref{algorithm} relies only on the unweighted (co-)state components $u_{\coarse, i}$ and $z_{\coarse,j}$ for error estimation and refinement in each step.
	Therefore, $\p^\star_\coarse$ will only be computed once, namely when the upper bound in~\eqref{eq:parameter-posteriori} for $\TT_\coarse = \TT_\ell$ is sufficiently small and, hence, Algorithm~\ref{algorithm} is terminated for this step $\ell$.
\end{remark}

\begin{remark}
	Instead of the weighted marking strategy in Algorithm~\ref{algorithm}{\rm (iii)}, the marking strategies from~\cite{fpz16} and the seminal work~\cite{ms09} can also be used in our analysis below.
	Both strategies first find minimal sets $\MM_\ell^u$ and $\MM_\ell^z$ that satisfy
	\begin{align*}
		\theta \, \sum_{i=0}^{n_{\QQ}} \eta_{\ell,i}^2
		\leq
		\sum_{i=0}^{n_{\QQ}} \eta_{\ell,i}(\MM_\ell^u)^2
		\quad \text{and} \quad
		\theta \, \sum_{j=1}^{n_{\CC}} \zeta_{\ell,j}^2
		\leq
		\sum_{j=1}^{n_{\CC}} \zeta_{\ell,j}(\MM_\ell^z)^2,
	\end{align*}
	respectively.
	The set of marked elements is then defined as suitable subset $\MM_\ell \subseteq \MM_\ell^u \cup \MM_\ell^z$ with $\#\MM_\ell \leq \Cmark \min \{ \#\MM_\ell^u, \#\MM_\ell^z \}$, where $\Cmark = 1$ in~\cite{ms09} and $\Cmark = 2$ in~\cite{fpz16}.
	In particular, the minimality assumption to $\MM_\ell$, $\MM_\ell^u$, and $\MM_\ell^z$ can be weakened to only hold up to an arbitrary but fixed factor greater than $1$; see~\cite{fpz16}.
\end{remark}

\subsection{Convergence of Algorithm~\ref{algorithm}}
Our second main result concerns linear convergence of the error estimator.

\begin{theorem}
\label{th:linear-convergence}
	Suppose $0 < \theta \leq 1$.
	Then, Algorithm~\ref{algorithm} satisfies linear convergence
	\begin{equation}
	\label{eq:linear-convergence}
		\varrho_{\ell + n}
		\leq
		\Clin \qlin^{n} \, \varrho_{\ell}
		\quad \text{for all } \ell, n \in \N_0.
	\end{equation}
	The constants $\Clin > 0 $ and $0 < \qlin < 1$ depend only on $\Cstab$, $\qred$, $\Crel$, and the (arbitrary) adaptivity parameter $0 < \theta \leq 1$.
\end{theorem}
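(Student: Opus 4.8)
Set $\mu_\ell^2 := \sum_{i=0}^{n_\QQ}\eta_{\ell,i}^2$ and $\nu_\ell^2 := \sum_{j=1}^{n_\CC}\zeta_{\ell,j}^2$, so that $\varrho_\ell^2 = 2\,\mu_\ell^2\nu_\ell^2$ by Remark~\ref{rem:upper-bound}, and abbreviate the squared solver increments $D^u_\ell := \sum_i\enorm{u_{\ell+1,i}-u_{\ell,i}}^2$ and $D^z_\ell := \sum_j\enorm{z_{\ell+1,j}-z_{\ell,j}}^2$. The plan is to derive a \emph{scalar} perturbed contraction for $\varrho_\ell^2$ whose perturbation has \emph{product} structure, and then invoke the standard ``perturbed contraction $+$ tail estimate $\Rightarrow$ R-linear convergence'' principle. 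First, the Dörfler property $\varrho_\ell(\MM_\ell)^2\ge\theta\,\varrho_\ell^2$ from Algorithm~\ref{algorithm}{\rm (iii)}, together with $\varrho_\ell(\MM_\ell)^2 = \mu_\ell^2\,\nu_\ell(\MM_\ell)^2 + \mu_\ell(\MM_\ell)^2\,\nu_\ell^2$ and $\varrho_\ell^2 = 2\mu_\ell^2\nu_\ell^2$, forces by a one-line contradiction argument that at each step at least one of the two component estimators satisfies Dörfler marking with the same parameter, i.e. $\mu_\ell(\MM_\ell)^2\ge\theta\,\mu_\ell^2$ or $\nu_\ell(\MM_\ell)^2\ge\theta\,\nu_\ell^2$. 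Summing over components the usual consequence of stability~\eqref{assumption:stab} and reduction~\eqref{assumption:red} (cf.~\cite{axioms}) gives, for every $\delta>0$,
\[
	\mu_{\ell+1}^2 \le (1+\delta)\big[\mu_\ell^2 - (1-\qred^2)\,\mu_\ell(\MM_\ell)^2\big] + \tfrac{1}{2}C_\delta\,D^u_\ell,
	\qquad
	\nu_{\ell+1}^2 \le (1+\delta)\big[\nu_\ell^2 - (1-\qred^2)\,\nu_\ell(\MM_\ell)^2\big] + \tfrac{1}{2}C_\delta\,D^z_\ell
\]
with $C_\delta := 4(1+\delta^{-1})\Cstab^2$. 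Inserting the dichotomy (so that one bracket is $\le q_\theta\,\mu_\ell^2$, resp.\ $\le q_\theta\,\nu_\ell^2$, with $q_\theta := 1-(1-\qred^2)\theta < 1$, and the other $\le\mu_\ell^2$, resp.\ $\le\nu_\ell^2$), multiplying the two bounds, and using $q_\theta\le1$ in the cross terms, one obtains in either case
\[
	\varrho_{\ell+1}^2 \le (1+\delta)^2 q_\theta\,\varrho_\ell^2 + \xi_\ell,
	\qquad
	\xi_\ell := (1+\delta)C_\delta\,\big(\mu_\ell^2\,D^z_\ell + \nu_\ell^2\,D^u_\ell\big) + \tfrac{1}{2}C_\delta^2\,D^u_\ell D^z_\ell .
\]
Choosing $\delta>0$ so small that $\bar q := (1+\delta)^2 q_\theta < 1$ (possible since $q_\theta<1$) yields a perturbed contraction with factor $\bar q$ depending only on $\qred$ and $\theta$.

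It remains to control the tail of $(\xi_\ell)$, which is where the product structure pays off. Since $a(\cdot,\cdot)$ is symmetric and $\XX_\ell\subseteq\XX_{\ell+1}$, the discrete solutions $u_{\ell,i}$, $z_{\ell,j}$ are Galerkin projections, so the Pythagoras identity gives $\sum_{k\ge\ell}D^u_k = \sum_i\big(\enorm{u_i-u_{\ell,i}}^2 - \lim_m\enorm{u_i-u_{m,i}}^2\big)\le \sum_i\enorm{u_i-u_{\ell,i}}^2 \le \Crel^2\,\mu_\ell^2$ by reliability~\eqref{assumption:rel}, and likewise $\sum_{k\ge\ell}D^z_k \le \Crel^2\,\nu_\ell^2$ (this is the only place symmetry of $a$ enters; for the generalization in the Remark after~\eqref{eq:problem}, general quasi-orthogonality replaces Pythagoras). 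Moreover, stability~\eqref{assumption:stab}, reduction~\eqref{assumption:red}, and reliability~\eqref{assumption:rel} imply quasi-monotonicity $\mu_k\le\Cmon\,\mu_\ell$, $\nu_k\le\Cmon\,\nu_\ell$ for all $k\ge\ell$, with $\Cmon$ depending only on $\Cstab,\qred,\Crel$. Combining these with the elementary inequality $\sum_{k\ge\ell}D^u_k D^z_k \le \big(\sum_{k\ge\ell}D^u_k\big)\big(\sum_{k\ge\ell}D^z_k\big)$ for nonnegative terms, I get
\[
	\sum_{k\ge\ell}\xi_k \le (1+\delta)C_\delta\,\Cmon^2\Crel^2\,\big(\mu_\ell^2\nu_\ell^2 + \nu_\ell^2\mu_\ell^2\big) + \tfrac{1}{2}C_\delta^2\,\Crel^4\,\mu_\ell^2\nu_\ell^2 = \Cconv\,\varrho_\ell^2,
\]
with $\Cconv$ depending only on $\Cstab,\qred,\Crel$ and $\theta$.

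Finally, inserting $\varrho_{k+1}^2\le\bar q\,\varrho_k^2 + \xi_k$ into the partial sum $\sum_{k=\ell}^{N}\varrho_{k+1}^2$ and using $\sum_{k=\ell}^N\xi_k\le\Cconv\,\varrho_\ell^2$ gives $(1-\bar q)\sum_{k=\ell}^N\varrho_k^2\le(1+\Cconv)\,\varrho_\ell^2$ for all $N$, hence $S_\ell := \sum_{k\ge\ell}\varrho_k^2\le C_0\,\varrho_\ell^2$ with $C_0 := (1+\Cconv)/(1-\bar q)$. The resulting telescoping bound $S_{\ell+1}\le(1-C_0^{-1})\,S_\ell$ then yields $\varrho_{\ell+n}^2\le S_{\ell+n}\le(1-C_0^{-1})^n S_\ell\le C_0(1-C_0^{-1})^n\varrho_\ell^2$, which is~\eqref{eq:linear-convergence} with $\Clin := \sqrt{C_0}$ and $\qlin := (1-C_0^{-1})^{1/2}$, both depending only on $\Cstab,\qred,\Crel$, and $\theta$. (If $\varrho_\ell=0$ for some $\ell$, reliability forces $u_{\ell,i}=u_i$ for all $i$ or $z_{\ell,j}=z_j$ for all $j$, whence $\varrho_k=0$ for all $k\ge\ell$ and~\eqref{eq:linear-convergence} is trivial.)

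The main obstacle — and the reason the single-estimator argument from the literature does not transfer verbatim — is that one cannot expect a step-by-step contraction of the product $\mu_\ell^2\nu_\ell^2$: in a step where only the $\nu$-part is Dörfler-marked, the $\mu$-part may increase by a mesh-independent factor that is generically strictly larger than $1$, so multiplicative bookkeeping over the two families of steps fails to close. The resolution above avoids splitting the product: it keeps $\varrho_\ell^2$ as the monitored quantity and exploits that its estimator reduction inherits a perturbation $\xi_\ell$ of product type $\mu_\ell^2 D^z_\ell + \nu_\ell^2 D^u_\ell + D^u_\ell D^z_\ell$, whose tail telescopes — via Pythagoras and reliability — back to a multiple of $\mu_\ell^2\nu_\ell^2\simeq\varrho_\ell^2$. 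Establishing this estimator reduction for $\varrho$ and organizing the perturbation bounds is the technical heart; everything else is assembly of the axioms of adaptivity from Proposition~\ref{prop:axioms}.
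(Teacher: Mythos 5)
Your proof is correct, but it takes a genuinely different route from the paper. The paper proves Theorem~\ref{th:linear-convergence} by packaging the components into vector-valued estimators $\widetilde{\eta}_\coarse$, $\widetilde{\zeta}_\coarse$ on the product spaces $\XX_\coarse^{n_\QQ+1}$, $\XX_\coarse^{n_\CC}$, verifying that these inherit the axioms \eqref{assumption:stab}--\eqref{assumption:drel} and the Pythagoras identities, and then simply citing \cite[Theorem~12]{fpz16}, which yields $\widetilde{\eta}_{\ell+n}\widetilde{\zeta}_{\ell+n}\le\Clin\qlin^n\,\widetilde{\eta}_\ell\widetilde{\zeta}_\ell$; the claim follows from $\varrho_\ell^2=2\,\widetilde{\eta}_\ell^2\widetilde{\zeta}_\ell^2$. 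You share the one genuinely problem-specific ingredient with that reference --- the dichotomy that the D\"orfler property for $\varrho_\ell$ forces D\"orfler marking for at least one of the two component estimators --- but you then replace the quasi-error contraction machinery of \cite{ms09,fpz16} by a self-contained argument: a perturbed estimator reduction for $\varrho_\ell^2$ whose perturbation $\xi_\ell$ has product structure, tail-summability of $\xi_\ell$ via Pythagoras, reliability, and quasi-monotonicity, and the standard ``summability implies R-linear convergence'' telescoping. Your closing diagnosis is also accurate: a step-by-step contraction of $\mu_\ell^2\nu_\ell^2$ cannot be expected, which is exactly why both your argument and \cite{fpz16} must work with quantities that are merely monotone in the unmarked factor. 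What the paper's route buys is brevity and a direct link to the established goal-oriented AFEM theory; what yours buys is a self-contained proof that avoids the quasi-error contraction entirely and, as you note, would survive the replacement of the Pythagoras identity by general quasi-orthogonality for the nonsymmetric extension mentioned after~\eqref{eq:problem}. The constant dependence you obtain ($\Cstab$, $\qred$, $\Crel$, $\theta$) matches the theorem statement, and your handling of the degenerate case $\varrho_\ell=0$ via quasi-monotonicity is fine.
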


For our last main result, linear convergence of the error estimator $\varrho_\ell$ with optimal algebraic rates, we introduce so-called approximation classes.
Given $N \in \N_0$, let $\T(N)$ be the set of all $\TT_\coarse \in \T$ with $\#\TT_\coarse - \#\TT_0\le N$.
For all $r>0$, we define
\begin{equation}
\label{eq:approximation-classes}
\begin{split}
	\norm{u_i}_{\mathbb{A}_r}
	&:=
	\sup_{N \in \N_0}
	(N+1)^r \min_{\TT_{\rm opt} \in \T(N)} \eta_{{\rm opt},i}(u_{{\rm opt},i})
	\in [0,\infty],
	\quad
	\text{for } 0 \leq i \leq n_{\QQ},\\
	\norm{z_j}_{\mathbb{A}_r}
	&:=
	\sup_{N \in \N_0}
	(N+1)^r \min_{\TT_{\rm opt} \in \T(N)} \zeta_{{\rm opt},j}(z_{{\rm opt},j})
	\in [0,\infty],
	\quad
	\text{for } 1 \leq j \leq n_{\CC}.
\end{split}
\end{equation}
By definition, e.g., $\norm{u_i}_{\mathbb{A}_r} < \infty$ yields that $\eta_{{\rm opt},i}(u_{{\rm opt},i})$ decays at least with algebraic rate $r > 0$ along a sequence of optimal meshes.
The following theorem states that any possible overall rate for the upper bound of~\eqref{eq:parameter-posteriori} will indeed be realized by Algorithm~\ref{algorithm}.

\begin{theorem}
\label{th:optimal-rates}
	Let $0 < \theta < \theta_\opt := (1+\Cstab^2\Cdrel^2)^{-1}$.
	Let $s_i, t_j > 0$ with
	\begin{equation*}
		\sum_{i=0}^{n_{\QQ}} \norm{u_i}_{\mathbb{A}_{s_i}}^2
		+ \sum_{j=1}^{n_{\CC}} \norm{z_j}_{\mathbb{A}_{t_j}}^2
		< \infty.
	\end{equation*}
	Then, there exists a constant $\Copt>0$ such that for all $\ell \in \N_0$ there holds that
	\begin{align}
	\label{eq:optimal-rates}
		\varrho_\ell
		\leq
		\Copt \,
		\Big[ \sum_{i=0}^{n_{\QQ}} \norm{u_i}_{\mathbb{A}_{s_i}}^2 \Big]^{1/2}
		\Big[ \sum_{j=1}^{n_{\CC}} \norm{z_j}_{\mathbb{A}_{t_j}}^2 \Big]^{1/2}
		\big( \#\TT_\ell - \#\TT_0 \big)^{-\beta},
	\end{align}
	where $\beta := \min \set{s_i}{0 \leq i \leq n_{\QQ}} + \min \set{t_j}{1 \leq j \leq n_{\CC}}$.
	The constant $\Copt$ depends only on $\Ccls$, $\Cstab$, $\qred$, $\Crel$, $\Cdrel$, $\theta$, $n_\QQ$, $n_\CC$, $s_i$, and $t_j$.
\end{theorem}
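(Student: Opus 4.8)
The plan is to reduce the problem to the well-known optimality theory for a single scalar residual estimator and then combine the individual rates via the product structure of $\varrho_\coarse$. First I would recall that Proposition~\ref{prop:axioms} verifies all four \emph{axioms of adaptivity} for each of the component estimators $\eta_{\coarse,i}(u_{\coarse,i})$ and $\zeta_{\coarse,j}(z_{\coarse,j})$ individually. However, the marking in Algorithm~\ref{algorithm}(iii) is driven by the weighted quantity $\varrho_\coarse$, not by a single estimator, so the standard black-box result does not apply directly. The key observation — which I expect is already exploited in the companion proof of Theorem~\ref{th:linear-convergence} — is that the weighted refinement indicator satisfies a two-sided comparison: using \eqref{eq:estimator-equivalence} one has $\varrho_\coarse^2 \simeq \big(\sum_i \eta_{\coarse,i}^2\big)\big(\sum_j \zeta_{\coarse,j}^2\big)$, and moreover $\varrho_\coarse(T)^2$ bounds both $\big(\sum_i\eta_{\coarse,i}^2\big)\,\big(\sum_j\zeta_{\coarse,j}(T)^2\big)$ and $\big(\sum_i\eta_{\coarse,i}(T)^2\big)\,\big(\sum_j\zeta_{\coarse,j}^2\big)$ from above. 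Hence a Dörfler-type bulk criterion for $\varrho_\ell$ implies a (weighted) bulk criterion simultaneously for the combined state estimator $\sum_i\eta_{\ell,i}^2$ and the combined co-state estimator $\sum_j\zeta_{\ell,j}^2$ — this is exactly the mechanism of~\cite{ms09,fpz16} adapted to products, and it is where the threshold $\theta_\opt = (1+\Cstab^2\Cdrel^2)^{-1}$ enters.

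Next I would set up the comparison-mesh argument. Fix the target rates $s := \min_i s_i$ and $t := \min_j t_j$ so $\beta = s+t$, and abbreviate $\sigma_\coarse^2 := \sum_i \eta_{\coarse,i}(u_{\coarse,i})^2$ and $\tau_\coarse^2 := \sum_j \zeta_{\coarse,j}(z_{\coarse,j})^2$. Since each $\|u_i\|_{\mathbb{A}_{s_i}}$ and $\|z_j\|_{\mathbb{A}_{t_j}}$ is finite, the combined estimators $\sigma_\coarse$ and $\tau_\coarse$ each lie in an approximation class of rate $s$ and $t$ respectively (a finite sum of $\mathbb{A}_r$-functions is again in $\mathbb{A}_{\min r}$, up to the constant $n_\QQ, n_\CC$). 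Now the standard optimality proof (see~\cite{ckns08,axioms}) runs as follows: given $\ell$, one uses the total-step estimate to build, via the approximation class, a coarse refinement $\TT_\varepsilon$ of $\TT_0$ with controlled cardinality on which $\sigma$ and $\tau$ are below a tuned tolerance; the overlay $\TT_\ell \oplus \TT_\varepsilon$ then, by discrete reliability \eqref{assumption:drel} together with the product comparison above, is shown to satisfy the bulk criterion of step~(iii) with parameter $\theta < \theta_\opt$; minimality of $\MM_\ell$ then forces $\#\MM_\ell \lesssim \#(\TT_\ell\oplus\TT_\varepsilon) - \#\TT_\ell$, and one bounds the right-hand side in terms of $\sigma_\ell\tau_\ell$ (hence $\varrho_\ell$) raised to the power $-1/(s+t)$. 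The extra care compared to the scalar case is that two tolerances (for $\sigma$ and for $\tau$) must be balanced so that the \emph{product} $\sigma_\varepsilon\tau_\varepsilon$ hits the prescribed fraction of $\sigma_\ell\tau_\ell$; choosing the split proportionally to the respective rates $s,t$ does the job and produces the exponent $-\beta$.

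Finally I would convert the resulting mesh-closure estimate into the rate bound. Combining the cardinality estimate $\#\TT_\ell - \#\TT_0 \lesssim \sum_{k<\ell}\#\MM_k \lesssim \big(\|\,u\,\|\,\|\,z\,\|\big)^{1/\beta}\sum_{k<\ell}\varrho_k^{-1/\beta}$ with linear convergence from Theorem~\ref{th:linear-convergence} (which gives a geometric series $\sum_{k<\ell}\varrho_k^{-1/\beta}\lesssim \varrho_\ell^{-1/\beta}$) yields $\#\TT_\ell - \#\TT_0 \lesssim \big(\|\,u\,\|\,\|\,z\,\|\big)^{1/\beta}\,\varrho_\ell^{-1/\beta}$, which rearranges to \eqref{eq:optimal-rates}; tracking constants shows the dependence claimed. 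The main obstacle, as anticipated, is the product structure: one must show that Dörfler marking for $\varrho_\ell$ is \emph{equivalent}, up to the constants in Proposition~\ref{prop:axioms}, to simultaneous Dörfler marking for $\sigma_\ell$ and $\tau_\ell$, and one must correctly split the approximation tolerance between state and co-state so the exponents add rather than take a minimum. Everything else is the by-now-standard optimality machinery of~\cite{ckns08,ms09,fpz16,axioms} applied to the two scalar estimators $\sigma$ and $\tau$.
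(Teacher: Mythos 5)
Your proposal is correct and follows essentially the same route as the paper: both reduce to the combined ($\ell^2$-summed) state and co-state estimators, note that these inherit the axioms (A1)--(A4), invoke the goal-oriented product-marking optimality machinery of~\cite{fpz16} for the threshold $\theta_\opt=(1+\Cstab^2\Cdrel^2)^{-1}$, and relate the combined approximation classes to the componentwise ones. The only difference in emphasis is that the paper cites \cite{fpz16} as a black box for the product-rate $-(s+t)$ and devotes its actual proof to the equivalence $\sum_{i}\norm{u_i}_{\mathbb{A}_r}^2\simeq\norm{\u}_{\mathbb{A}_r}^2$ (via overlay of componentwise optimal meshes and quasi-monotonicity), which you dispatch in a parenthetical --- that is precisely the step where the constants $n_\QQ$ and $n_\CC$ enter.
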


\begin{remark}
	Note that our adaptive algorithm drives down only the upper bound of the parameter error with optimal rates and not the parameter error itself.
	However, in general, one cannot expect to obtain a rate higher than $\min \set{s_i}{0 \leq i \leq n_{\QQ}} + \min \set{t_j}{1 \leq j \leq n_{\CC}}$ for the parameter error.
	This is due to the fact that the assembly of the matrix entries $\B_{\ell, ij}$, which are needed to compute $\p_\ell^\star$, uses all state or co-state components and thus its accuracy is constricted by their respective minimal rate.
\end{remark}

\begin{remark}
	Theorem~\ref{th:optimal-rates} holds true for all mesh refinement strategies as long as there hold the \emph{son estimate}
	\begin{equation}
	\label{eq:mesh-sons}
		\#(\TT_\coarse \backslash \TT_\fine) + \#\TT_\coarse
		\leq
		\#\TT_\fine
		\quad \text{for all } \TT_\coarse \in \T
		\text{ and all } \TT_\fine \in \T(\TT_\coarse),
	\end{equation}
	the \emph{overlay estimate}
	\begin{equation}
	\label{eq:mesh-overlay}
		\#(\TT_\coarse \oplus \TT_\fine)
		\leq
		\#\TT_\coarse + \#\TT_\fine - \#\TT_0,
		\quad \text{for all } \TT_\coarse \in \T
		\text{ and all } \TT_\fine \in \T(\TT_\coarse),
	\end{equation}
	and the \emph{closure estimate} (with constant $\Ccls > 0$)
	\begin{equation}
	\label{eq:mesh-closure}
		\#\TT_\ell - \#\TT_0
		\leq
		\Ccls \, \sum_{j=0}^{\ell-1} \#\MM_j
		\quad \text{for all } \ell \in \N,
	\end{equation}
	as well as the axioms~\eqref{assumption:stab}--\eqref{assumption:drel}; see~\cite{axioms}.
	In this sense, the present analysis is indeed independent of newest vertex bisection.
\end{remark}


\section{Proof of Theorem~\ref{th:parameter-apriori}}\label{sec:errorbound}

\subsection{Auxiliary a priori bounds}
We start by stating some well-known \textsl{a priori} estimates, which are used throughout the subsequent analysis.
\begin{lemma}
	\label{lemma:apriori-uz}
	There exists a constant $\Cproblem > 0$ such that, for all $\TT_\coarse \in \T$,
	\begin{equation}
	\label{eq:apriori-uz}
	\begin{split}
		\enorm{u_{\coarse,i}} \leq \enorm{u_i}
		&\leq
		\Cproblem \, \big[ \norm{f_i}_{L^2(\Omega)} + \norm{\f_i}_{L^2(\Omega)} \big]
		\quad
		\text{for all } 0 \leq i \leq n_{\QQ},\\
		\enorm{z_{\coarse,j}} \leq \enorm{z_j}
		&\leq
		\Cproblem \, \big[ \norm{g_j}_{L^2(\Omega)} + \norm{\g_j}_{L^2(\Omega)} \big]
		\quad
		\text{for all } 1 \leq j \leq n_{\CC}.
	\end{split}
	\end{equation}
	The constant $\Cproblem$ depends only on $\A$ and $\Omega$.
	\hfill$\square$
\end{lemma}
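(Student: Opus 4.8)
The plan is to obtain both inequalities in each line of~\eqref{eq:apriori-uz} from two classical ingredients: the best-approximation property of the Galerkin method in the (symmetric) energy norm, and a one-line testing argument for the continuous components. First I would note that, since $\A$ is symmetric and uniformly positive definite, $a(\cdot,\cdot)$ is a symmetric, continuous, and coercive bilinear form on $\XX = H^1_0(\Omega)$, so that $\enorm{\cdot} = a(\cdot,\cdot)^{1/2}$ is a norm equivalent to the $H^1_0(\Omega)$-norm. Because $\XX_\coarse \subseteq \XX$, the Galerkin solutions $u_{\coarse,i}$ and $z_{\coarse,j}$ from~\eqref{eq:primal-components} and~\eqref{eq:def-dual-components} are precisely the $\enorm{\cdot}$-orthogonal projections of $u_i$ and $z_j$ onto $\XX_\coarse$. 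Hence the Pythagorean identity $\enorm{u_i}^2 = \enorm{u_i - u_{\coarse,i}}^2 + \enorm{u_{\coarse,i}}^2$ (and the analogous one for $z_j$) immediately gives the left inequalities $\enorm{u_{\coarse,i}} \le \enorm{u_i}$ and $\enorm{z_{\coarse,j}} \le \enorm{z_j}$.

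For the right inequalities I would test the defining equation of $u_i$ with $v = u_i$ itself. Using the explicit forms of $F_0$ and $b(\e_i,\cdot)$, this yields $\enorm{u_i}^2 = \int_\Omega f_i u_i - \f_i \cdot \nabla u_i \d{x} \le \norm{f_i}_{L^2(\Omega)} \norm{u_i}_{L^2(\Omega)} + \norm{\f_i}_{L^2(\Omega)} \norm{\nabla u_i}_{L^2(\Omega)}$ by Cauchy--Schwarz. Friedrichs' inequality bounds $\norm{u_i}_{L^2(\Omega)} \lesssim \norm{\nabla u_i}_{L^2(\Omega)}$ with a constant depending only on $\Omega$, and uniform positive definiteness of $\A$ gives $\norm{\nabla u_i}_{L^2(\Omega)} \lesssim \enorm{u_i}$ with a constant depending only on $\A$. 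Dividing by $\enorm{u_i}$ (the case $\enorm{u_i}=0$ being trivial) produces the asserted bound with a constant $\Cproblem$ depending only on $\A$ and $\Omega$. The estimate for $z_j$ follows verbatim: test~\eqref{eq:def-dual-components} with $v = z_j$ and replace $f_i, \f_i$ by $g_j, \g_j$.

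This lemma is entirely standard, so I do not expect a genuine obstacle. The only points deserving a word of care are that the best-approximation step relies on symmetry of $\A$ (equivalently, of $a$), and that every hidden constant must be seen to be independent of $\TT_\coarse$; the latter is automatic here, since all constants originate solely from Friedrichs' inequality on $\Omega$ and from the ellipticity constants of $\A$, none of which depend on the triangulation.
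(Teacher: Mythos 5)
Your proof is correct and is exactly the standard argument that the paper leaves implicit (the lemma is stated with a closing $\square$ and no written proof, being labelled ``well-known''): Galerkin orthogonality plus symmetry of $a(\cdot,\cdot)$ gives the Pythagoras identity and hence $\enorm{u_{\coarse,i}}\le\enorm{u_i}$, while testing with the solution itself, Cauchy--Schwarz, Friedrichs' inequality, and the uniform positive definiteness of $\A$ yield the right-hand bounds with a constant depending only on $\A$ and $\Omega$. No gaps.
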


Since $u'(\p)$, $z'(\p)$, and their discrete counterparts depend linearly on the parameter, we can use the \textsl{a priori} bounds to split parameters and component errors. 

\begin{lemma}
\label{lemma:derivative-estimate}
	There exists a constant $C > 0$ such that, for all $\p \in \QQ$,
	\begin{equation}
	\label{eq:derivative-estimate}
	\begin{split}
		\enorm[\big]{u'(\p) - u'_\coarse(\p)}
		&\leq
		\norm{\p}_{\QQ} \Big[ \sum_{i=1}^{n_\QQ} \enorm{u_i - u_{\coarse,i}}^2 \Big]^{1/2},\\
		\enorm[\big]{z'(\p) - z'_\coarse(\p)}
		&\leq
		C \, \norm{\p}_{\QQ} \Big[ \sum_{j=1}^{n_\CC} \enorm{z_j - z_{\coarse,j}}^2 \Big]^{1/2}.
	\end{split}
	\end{equation}
	The constant $C$ depends only on $\Omega$ as well as the data $\A$, $f_i$, $g_j$, $\f_i$, and $\g_j$.
\end{lemma}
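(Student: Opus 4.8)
The plan is to exploit the linear decomposition \eqref{eq:component-combination} of $u'(\p)$ and \eqref{eq:zprime-combination} of $z'_\coarse(\p)$ together with a triangle inequality and a Cauchy--Schwarz argument in the finite index set $\{1,\dots,n_\QQ\}$ resp.\ $\{1,\dots,n_\CC\}$. For the first estimate, write $u'(\p) - u'_\coarse(\p) = \sum_{i=1}^{n_\QQ} p_i\,(u_i - u_{\coarse,i})$, apply the triangle inequality in $\enorm{\cdot}$, and then use Cauchy--Schwarz in $\R^{n_\QQ}$ to get $\sum_i |p_i|\,\enorm{u_i-u_{\coarse,i}} \le \norm{\p}_\QQ \big[\sum_i \enorm{u_i-u_{\coarse,i}}^2\big]^{1/2}$. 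This already gives the first line with constant $1$.

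For the second estimate I would first rewrite $z'(\p) - z'_\coarse(\p)$. From \eqref{eq:zprime-combination} (and the analogous continuous identity, where the coefficients are $G_j(u'_\coarse(\p))$ in both cases), one has $z'(\p) - z'_\coarse(\p) = \sum_{j=1}^{n_\CC} G_j\big(u'_\coarse(\p)\big)\,(z_j - z_{\coarse,j})$. Then the same triangle inequality and Cauchy--Schwarz steps give
\begin{equation*}
	\enorm[\big]{z'(\p) - z'_\coarse(\p)}
	\le
	\Big[ \sum_{j=1}^{n_\CC} |G_j(u'_\coarse(\p))|^2 \Big]^{1/2}
	\Big[ \sum_{j=1}^{n_\CC} \enorm{z_j - z_{\coarse,j}}^2 \Big]^{1/2}.
\end{equation*}
It remains to bound $\big[\sum_j |G_j(u'_\coarse(\p))|^2\big]^{1/2} \lesssim \norm{\p}_\QQ$. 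For this I would use continuity of each $G_j$ on $\XX$ (with norm controlled by $\norm{g_j}_{L^2(\Omega)} + \norm{\g_j}_{L^2(\Omega)}$ and a Friedrichs/Poincaré constant depending on $\Omega$), giving $|G_j(u'_\coarse(\p))| \lesssim \enorm{u'_\coarse(\p)}$, and then the a priori bound: by \eqref{eq:component-combination} for the discrete components and Cauchy--Schwarz, $\enorm{u'_\coarse(\p)} \le \norm{\p}_\QQ \big[\sum_{i=1}^{n_\QQ}\enorm{u_{\coarse,i}}^2\big]^{1/2} \le \norm{\p}_\QQ \big[\sum_{i=1}^{n_\QQ}\enorm{u_i}^2\big]^{1/2}$, the last step by Lemma~\ref{lemma:apriori-uz}, and again by Lemma~\ref{lemma:apriori-uz} this is bounded by $\norm{\p}_\QQ$ times a constant depending only on $\Omega$, $\A$, and the $f_i, \f_i$. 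Collecting the constants from the continuity estimates for the $G_j$ and from the a priori bounds yields the claimed $C$, depending only on $\Omega$, $\A$, $f_i$, $\f_i$, $g_j$, $\g_j$.

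The only mildly delicate point is making sure the coefficients appearing in $z'(\p)$ and $z'_\coarse(\p)$ are \emph{the same} (namely $G_j(u'_\coarse(\p))$), so that the difference reduces cleanly to $\sum_j G_j(u'_\coarse(\p))(z_j - z_{\coarse,j})$ with no extra cross terms; this is exactly what \eqref{eq:zprime-equation}--\eqref{eq:zprime-combination} provide, since both the continuous and discrete $z'$ are built from the \emph{discrete} state derivative $u'_\coarse(\p)$. Everything else is routine: triangle inequality, Cauchy--Schwarz over a finite index set, continuity of the linear functionals $G_j$, and the already-established a priori bounds of Lemma~\ref{lemma:apriori-uz}.
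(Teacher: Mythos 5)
Your proposal is correct and follows essentially the same route as the paper: the first bound via the decomposition \eqref{eq:component-combination}, triangle inequality, and Cauchy--Schwarz, and the second via \eqref{eq:zprime-combination} (correctly noting that both $z'(\p)$ and $z'_\coarse(\p)$ carry the \emph{same} coefficients $G_j(u'_\coarse(\p))$), combined with continuity of the $G_j$ and the a priori bound $\enorm{u'_\coarse(\p)} \lesssim \norm{\p}_{\QQ}$ from Lemma~\ref{lemma:apriori-uz}. The paper isolates that last bound as its display \eqref{eq:Du-bounded}, but the argument is identical.
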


\begin{proof}
	Let $\p \in \QQ$.
	With the representation~\eqref{eq:component-combination}, the triangle inequality, and the Cauchy--Schwarz inequality, we obtain the first inequality of~\eqref{eq:derivative-estimate} by
	\begin{equation*}
		\enorm{u'(\p) - u'_\coarse(\p)}
		\eqreff{eq:component-combination}{\leq}
		\sum_{i=1}^{n_\QQ} |\p_i| \enorm{u_i - u_{\coarse,i}}
		\leq
		\norm{\p}_{\QQ} \Big[ \sum_{i=1}^{n_\QQ} \enorm{u_i - u_{\coarse,i}}^2 \Big]^{1/2}.
	\end{equation*}
	The same arguments, together with  a priori bounds for $u_{\coarse,i}$, can be used to show that
	\begin{equation}
	\label{eq:Du-bounded}
		\enorm{u'_\coarse(\p)}
		\eqreff{eq:component-combination}{\leq}
		\norm{\p}_{\QQ} \Big[ \sum_{i=1}^{n_\QQ} \enorm{u_{\coarse,i}}^2 \Big]^{1/2}
		\, \eqreff{eq:apriori-uz}{\leq}
		\norm{\p}_{\QQ} \Cproblem \Big[ \sum_{i=1}^{n_\QQ} \big( \norm{f_i}_{L^2(\Omega)} + \norm{\f_i}_{L^2(\Omega)} \big)^2 \Big]^{1/2}.
	\end{equation}
	For the second inequality in~\eqref{eq:derivative-estimate}, we can again employ the triangle inequality.
	Together with continuity of the measurement functionals $G_j$, which depend only on $g_j$ and $\g_j$, and~\eqref{eq:Du-bounded} this leads to
	\begin{align*}
		\enorm[\big]{z'(\p) - z'_\coarse(\p)}
		&\eqreff{eq:zprime-combination}{\leq}
		\sum_{j=1}^{n_\CC} \big| G_j \big( u'_\coarse(\p) \big) \big| \, \enorm{z_j - z_{\coarse,j}}\\
		&\leq
		\norm[\big]{\G \big( u'_\coarse(\p) \big)}_{\CC} \Big[ \sum_{j=1}^{n_\CC} \enorm{z_j - z_{\coarse,j}}^2 \Big]^{1/2}
		\, \eqreff{eq:Du-bounded}{\lesssim}
		\norm{\p}_{\QQ} \Big[ \sum_{j=1}^{n_\CC} \enorm{z_j - z_{\coarse,j}}^2 \Big]^{1/2}.
	\end{align*}
	This concludes the proof.
\end{proof}

\begin{lemma}
	\label{lemma:apriori-p}
	There exists a constant $C_\star > 0$ such that, for all $\TT_\coarse \in \T$,
	\begin{equation}
	\label{eq:apriori-p}
		\norm{\p^\star}_{\QQ} \leq C_{\star}
		\quad \text{ and } \quad
		\norm{\p^\star_\coarse}_{\QQ} \leq C_{\star}.
	\end{equation}
	The constant $C_\star$ depends only on $\alpha$, $\G^\star$, $\Omega$, $\kappa$, $\min \set{\norm{\q}_{\QQ}}{\q \in \QQ}$, and the data $A$, $f_i$, $g_j$, $\f_i$, and $\g_j$.
\end{lemma}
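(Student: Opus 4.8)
The plan is to test the first-order optimality condition~\eqref{eq:first-order-condition} with a fixed admissible parameter of minimal norm and to play this off against the coercivity~\eqref{eq:locally-convex} of the Hessian; the discrete bound follows by the same argument applied to~\eqref{eq:first-order-condition-discrete}, and the uniformity in $\TT_\coarse$ will come entirely from Lemma~\ref{lemma:apriori-uz}. Since $\QQ \subseteq \R^{n_\QQ}$ is nonempty, closed, and convex, the Euclidean projection of the origin onto $\QQ$ is a well-defined element $\q_0 \in \QQ$ with $\norm{\q_0}_\QQ = \min\set{\norm{\q}_\QQ}{\q \in \QQ}$. Using~\eqref{eq:Jprime-explicit} and~\eqref{eq:hessian-explicit}, write $J'[\p^\star](\q) = J''(\q,\p^\star) + \q^\intercal \vec{L}$, where $\vec{L} := \B(\G(u_0) - \G^\star) \in \R^{n_\QQ}$ and $J''(\cdot,\cdot)$ denotes the symmetric bilinear form with matrix $\B\B^\intercal + \alpha \I$.

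Testing~\eqref{eq:first-order-condition} with $\p = \q_0$ and exploiting linearity of $\q \mapsto J'[\p^\star](\q)$ gives $J'[\p^\star](\p^\star) \le J'[\p^\star](\q_0)$, that is,
\[
	J''(\p^\star, \p^\star)
	\;\le\;
	J''(\q_0, \p^\star) + (\q_0 - \p^\star)^\intercal \vec{L}.
\]
Bounding the left-hand side from below via~\eqref{eq:locally-convex} and the right-hand side from above by the operator-norm bound for $\B\B^\intercal + \alpha\I$ together with the Cauchy--Schwarz inequality, one arrives at a quadratic inequality $\kappa\,a^2 \le C_1\,a + C_2$ for $a := \norm{\p^\star}_\QQ$ with $C_1 := \norm{\B\B^\intercal + \alpha\I}\,\norm{\q_0}_\QQ + \norm{\vec{L}}_\QQ$ and $C_2 := \norm{\vec{L}}_\QQ\,\norm{\q_0}_\QQ$, hence $a \le \big(C_1 + \sqrt{C_1^2 + 4\kappa C_2}\,\big)/(2\kappa) =: C_\star$.

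It then remains to verify the claimed dependence of $C_\star$ and to transfer the estimate to the discrete level. Continuity of each $G_j$ (with norm controlled by $\norm{g_j}_{L^2(\Omega)} + \norm{\g_j}_{L^2(\Omega)}$ via Poincaré and the ellipticity of $\A$) together with $\B_{ij} = G_j(u_i)$ and the a priori bound~\eqref{eq:apriori-uz} bounds $\norm{\B}$, hence $\norm{\B\B^\intercal + \alpha\I}$ and $\norm{\vec{L}}_\QQ \lesssim \norm{\B}\,(\enorm{u_0} + \norm{\G^\star}_\CC)$, by constants depending only on $\alpha$, $\Omega$, $\G^\star$, and the data $\A$, $f_i$, $g_j$, $\f_i$, $\g_j$; and $\norm{\q_0}_\QQ$ is precisely $\min\set{\norm{\q}_\QQ}{\q \in \QQ}$. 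For $\p^\star_\coarse$ I would run the identical argument with $J$, $\B$, $u_0$ replaced by $J_\coarse$, $\B_\coarse$, $u_{\coarse,0}$ and $\vec{L}$ by $\B_\coarse(\G(u_{\coarse,0}) - \G^\star)$, testing~\eqref{eq:first-order-condition-discrete} with the same $\q_0$ and using that $J_\coarse''$ is coercive with a constant bounded below uniformly in $\TT_\coarse$ (namely by $\alpha$ if $\alpha > 0$; otherwise this positivity is part of the standing assumptions that make $\p^\star_\coarse$ well-defined). Crucially, $\B_{\coarse,ij} = G_j(u_{\coarse,i})$ and $\enorm{u_{\coarse,i}} \le \enorm{u_i}$ by Lemma~\ref{lemma:apriori-uz}, so $\norm{\B_\coarse}$, $\norm{\B_\coarse\B_\coarse^\intercal + \alpha\I}$, and the discrete linear part are bounded uniformly in $\TT_\coarse \in \T$ by the very same data-dependent constants as in the continuous case, which yields one common $C_\star$ for both inequalities in~\eqref{eq:apriori-p}. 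The argument is otherwise entirely elementary, so the only genuine ingredient — and the point I expect to matter for keeping the constant mesh-independent — is the uniform a priori bound $\enorm{u_{\coarse,i}} \le \enorm{u_i}$ supplied by Lemma~\ref{lemma:apriori-uz}.
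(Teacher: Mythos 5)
Your proposal is correct and follows essentially the same route as the paper: test the variational inequality \eqref{eq:first-order-condition} with a minimal-norm element of $\QQ$, combine with the coercivity \eqref{eq:locally-convex} and the explicit representation \eqref{eq:Jprime-explicit} to get a quadratic inequality in $\norm{\p^\star}_{\QQ}$, bound $\norm{\B}$ and the linear term via Lemma~\ref{lemma:apriori-uz}, and transfer to the discrete level using $\enorm{u_{\coarse,i}}\le\enorm{u_i}$. The only (cosmetic) difference is that you resolve the quadratic inequality by the explicit root formula, whereas the paper splits into the cases $\norm{\p^\star}_{\QQ}<1$ and $\norm{\p^\star}_{\QQ}\ge 1$; your parenthetical remark about the coercivity of $J_\coarse''$ is in fact slightly more careful than the paper, which treats the discrete case as entirely analogous.
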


\begin{proof}
	We first define $C := \min \set{\norm{\q}_{\QQ}}{\q \in \QQ}$ and choose $\p \in \QQ$ such that $\norm{\p}_{\QQ} = C$ (such a choice exists since $\QQ$ is closed and convex).
	From the first and second order optimality condition, \eqref{eq:first-order-condition} and \eqref{eq:locally-convex}, and the explicit form of $J'$ in~\eqref{eq:Jprime-explicit}, we have that
	\begin{align*}
		\kappa \, \norm{\p^\star}_{\QQ}^2
		&\eqreff{eq:locally-convex}{\leq}
		J''(\p^\star, \p^\star)
		\eqreff{eq:hessian-explicit}{=}
		(\p^\star)^\intercal \big( \B \B^\intercal + \alpha \I \big) \p^\star
		\eqreff{eq:Jprime-explicit}{=}
		J'[\p^\star](\p^\star) - (\p^\star)^\intercal \B \big( \G(u_0) - \G^\star \big)\\
		&\qquad \eqreff{eq:first-order-condition}{\leq}
		J'[\p^\star](\p) - (\p^\star)^\intercal \B \big( \G(u_0) - \G^\star \big)\\
		&\qquad \eqreff{eq:Jprime-explicit}{=}
		(\p^\star)^\intercal \big( \B \B^\intercal + \alpha \I \big) \p
		+ (\p - \p^\star)^\intercal \B \big( \G(u_0) - \G^\star \big).
	\end{align*}
	We denote by $\norm{\cdot}_{L(\QQ)}$ the natural matrix norm induced by the Euclidean norm $\norm{\cdot}_{\QQ}$, i.e., the spectral norm.
	Using the Cauchy--Schwarz inequality together with $\norm{\p}_{\QQ} = C$, we see that
	\begin{equation*}
		\kappa \, \norm{\p^\star}_{\QQ}^2
		\leq
		\norm{\p^\star}_{\QQ}
		\big(
			C \, \norm{\B \B^\intercal + \alpha \I}_{L(\QQ)} + \norm{\B ( \G(u_0) - \G^\star )}_{\QQ}
		\big)
		+ C \, \norm{\B ( \G(u_0) - \G^\star )}_{\QQ}.
	\end{equation*}
	In the case $\norm{\p^\star}_{\QQ} < 1$, there already holds the first inequality of~\eqref{eq:apriori-p} with $C_\star = 1$.
	In the case $\norm{\p^\star}_{\QQ} \geq 1$, we can divide by $\kappa \norm{\p^\star} \geq \kappa > 0$ and further estimate the last inequality by
	\begin{equation}
	\label{eq:apriori-p-1}
		\norm{\p^\star}_{\QQ}
		\leq
		\kappa^{-1}
		\big(
			C \, \norm{\B \B^\intercal + \alpha \I}_{L(\QQ)}
			+ (1 + C ) \norm{\B ( \G(u_0) + \G^\star )}_{\QQ}
		\big).
	\end{equation}
	From the definition of $\B$, we have that, for all $1 \leq i \leq n_{\QQ}$ and $1 \leq j \leq n_{\CC}$,
	\begin{equation*}
		|\B_{ij}|
		=
		|G_j(u_i)|
		=
		a(u_i, z_j)
		\eqreff{eq:apriori-uz}{\leq}
		\Cproblem^2 \,
		\big[ \norm{f_i}_{L^2(\Omega)} + \norm{\f_i}_{L^2(\Omega)} \big]
		\big[ \norm{g_j}_{L^2(\Omega)} + \norm{\g_j}_{L^2(\Omega)} \big].
	\end{equation*}
	We can thus estimate the Frobenius-norm $\norm{\cdot}_F$ of $B$ by
	\begin{equation*}
		\norm{B}_{F}^2
		=
		\sum_{i=1}^{n_{\QQ}} \sum_{j=1}^{n_{\CC}} |B_{ij}|^2
		\leq
		\Cproblem^4 \sum_{i=1}^{n_{\QQ}} \sum_{j=1}^{n_{\CC}} 
		\big[ \norm{f_i}_{L^2(\Omega)} + \norm{\f_i}_{L^2(\Omega)} \big]^2
		\big[ \norm{g_j}_{L^2(\Omega)} + \norm{\g_j}_{L^2(\Omega)} \big]^2.
	\end{equation*}
	Using these estimates together with
	\begin{equation*}
		\norm{\B \B^\intercal + \alpha \I}_{L(\QQ)}
		\leq
		\norm{B}_F^2 + \alpha
		\quad \text{ and } \quad
		\norm{\B ( \G(u_0) - \G^\star )}_{\QQ}
		\leq
		\norm{\B}_{F} \norm{\G(u_0) - \G^\star}_{\CC}
	\end{equation*}
	to bound the matrix norms in~\eqref{eq:apriori-p-1}, we show the continuous estimate in~\eqref{eq:apriori-p}.
	Since the estimate of $\B_{ij}$ holds in the discrete case as well, the discrete estimate in~\eqref{eq:apriori-p} follows analogously.
\end{proof}

The next lemma shows an estimate similar to Lemma~\ref{lemma:derivative-estimate}, but without the additional factor $\norm{\p}_{\QQ}$.
Note that such an additional factor cannot be expected since neither $u(\p)$ nor $z(\p)$ are linear in the parameter.
\begin{lemma}
	\label{lemma:uz-estimate}
	There exists a constant $C > 0$ such that, for all $\TT_\coarse \in \T$ and $\p \in \{ \p^\star, \p^\star_\coarse \}$,
	\begin{equation}
	\label{eq:uz-estimate}
	\begin{split}
		\enorm[\big]{u(\p) - u_\coarse(\p)}
		&\leq
		C \Big[ \sum_{i=0}^{n_\QQ} \enorm{u_i - u_{\coarse,i}}^2 \Big]^{1/2},\\
		\quad
		\enorm[\big]{z(\p) - z_\coarse(\p)}
		&\leq
		C \Big[ \sum_{j=1}^{n_\CC} \enorm{z_j - z_{\coarse,j}}^2 \Big]^{1/2}.
	\end{split}
	\end{equation}
	The constant $C$ depends only on $\G^\star$, the data $\A$, $f_i$, $g_j$, $\f_i$, and $\g_j$, and the constants $\Cproblem$ from Lemma~\ref{lemma:apriori-uz} and $C_\star$ from Lemma~\ref{lemma:apriori-p}.
\end{lemma}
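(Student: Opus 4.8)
The plan is to exploit the decomposition $u(\p) = u_0 + \sum_{i=1}^{n_\QQ} p_i u_i$ from~\eqref{eq:component-combination} (and the analogous discrete identity $u_\coarse(\p) = u_{\coarse,0} + \sum_{i=1}^{n_\QQ} p_i u_{\coarse,i}$), so that
\begin{equation*}
	u(\p) - u_\coarse(\p)
	=
	(u_0 - u_{\coarse,0}) + \sum_{i=1}^{n_\QQ} p_i (u_i - u_{\coarse,i}).
\end{equation*}
The triangle inequality followed by Cauchy--Schwarz then gives
\begin{equation*}
	\enorm{u(\p) - u_\coarse(\p)}
	\leq
	\enorm{u_0 - u_{\coarse,0}} + \norm{\p}_{\QQ} \Big[ \sum_{i=1}^{n_\QQ} \enorm{u_i - u_{\coarse,i}}^2 \Big]^{1/2}
	\leq
	(1 + \norm{\p}_{\QQ}) \Big[ \sum_{i=0}^{n_\QQ} \enorm{u_i - u_{\coarse,i}}^2 \Big]^{1/2}.
\end{equation*}
Since $\p \in \{\p^\star, \p^\star_\coarse\}$, Lemma~\ref{lemma:apriori-p} bounds $\norm{\p}_{\QQ} \leq C_\star$ uniformly in $\TT_\coarse$, which yields the first inequality of~\eqref{eq:uz-estimate} with $C = 1 + C_\star$.

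For the co-state estimate, I would first recall from the definitions that $z(\p) = \sum_{j=1}^{n_\CC} r_{\coarse,j}(\p) z_j$ and $z_\coarse(\p) = \sum_{j=1}^{n_\CC} r_{\coarse,j}(\p) z_{\coarse,j}$ share the \emph{same} coefficients $r_{\coarse,j}(\p)$ (both defined via the discrete residual $\r_\coarse(\p) = \G(u_\coarse(\p)) - \G^\star$). Hence
\begin{equation*}
	z(\p) - z_\coarse(\p)
	=
	\sum_{j=1}^{n_\CC} r_{\coarse,j}(\p) \, (z_j - z_{\coarse,j}),
\end{equation*}
and Cauchy--Schwarz gives $\enorm{z(\p) - z_\coarse(\p)} \leq \norm{\r_\coarse(\p)}_{\CC} \big[ \sum_{j=1}^{n_\CC} \enorm{z_j - z_{\coarse,j}}^2 \big]^{1/2}$. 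It then remains to bound $\norm{\r_\coarse(\p)}_{\CC} = \norm{\G(u_\coarse(\p)) - \G^\star}_{\CC}$ uniformly: by continuity of $\G$ (depending only on $g_j, \g_j$) and the a priori bound $\enorm{u_\coarse(\p)} \leq \enorm{u_{\coarse,0}} + \norm{\p}_{\QQ} \big[\sum_{i=1}^{n_\QQ}\enorm{u_{\coarse,i}}^2\big]^{1/2}$, which is controlled via Lemma~\ref{lemma:apriori-uz} and Lemma~\ref{lemma:apriori-p}, one gets $\norm{\r_\coarse(\p)}_{\CC} \lesssim 1$ with a constant of the claimed dependence. This proves the second inequality.

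The only mild subtlety — and the step I would be most careful about — is making sure the coefficients in the two sums defining $z(\p)$ and $z_\coarse(\p)$ are genuinely identical, so that their difference collapses onto $z_j - z_{\coarse,j}$ without an extra error term coming from the residual; this is exactly what the definitions in~\eqref{eq:dual-solution} arrange, and is the reason $z(\p)$ (rather than some fully continuous analogue) is the right object. Everything else is a routine combination of the triangle inequality, Cauchy--Schwarz, and the already-established uniform bounds of Lemmas~\ref{lemma:apriori-uz} and~\ref{lemma:apriori-p}.
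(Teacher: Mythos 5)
Your proposal is correct and follows essentially the same route as the paper: the state estimate is the component decomposition plus triangle inequality, Cauchy--Schwarz, and the uniform parameter bound $\norm{\p}_{\QQ} \leq C_\star$ from Lemma~\ref{lemma:apriori-p}, while the co-state estimate collapses $z(\p) - z_\coarse(\p)$ onto $\sum_j r_{\coarse,j}(\p)(z_j - z_{\coarse,j})$ and bounds $\norm{\r_\coarse(\p)}_{\CC} = \norm{\G(u_\coarse(\p)) - \G^\star}_{\CC}$ via continuity of $\G$ and the a priori bounds of Lemmas~\ref{lemma:apriori-uz} and~\ref{lemma:apriori-p}. The subtlety you flag --- that both $z(\p)$ and $z_\coarse(\p)$ carry the \emph{same} discrete residual coefficients $r_{\coarse,j}(\p)$ --- is exactly the point the paper's definitions in~\eqref{eq:dual-solution} are designed to arrange.
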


\begin{proof}
	The estimate for the difference in the state $u$ follows along the same lines as in the proof of Lemma~\ref{lemma:derivative-estimate}, where the parameter can be estimated by~\eqref{eq:apriori-p}.
	For the difference in the co-state, we note that~\eqref{eq:apriori-uz} and $\p \in \{ \p^\star, \p^\star_\coarse \}$ imply that
	\begin{equation}
	\label{eq:uz-estimate-1}
		\enorm{u_\coarse(\p)} \leq C < \infty,
	\end{equation}
	where the constant $C > 0$ depends only on $\Cproblem$ from Lemma~\ref{lemma:apriori-uz}, the data $f_i$ and $\f_i$, and $C_\star$ from Lemma~\ref{lemma:apriori-p}.
	Thus, with continuity of the measurement functionals $G_j$, which depend only on $g_j$, $\g_j$, we have that
	\begin{align*}
		\enorm[\big]{z(\p) - z_\coarse(\p)}
		&\leq
		\sum_{j=1}^{n_\CC} \big| G_j \big( u_\coarse(\p) \big) - G^\star_j \big| \, \enorm{z_j - z_{\coarse,j}}\\
		&\leq
		\norm[\big]{\G \big( u_\coarse(\p) \big) - \G^\star}_{\CC} \Big[ \sum_{j=1}^{n_\CC} \enorm{z_j - z_{\coarse,j}}^2 \Big]^{1/2}\\
		&\lesssim
		\big( \enorm{u_\coarse(\p)} + \norm{\G^\star}_{\CC} \big) \Big[ \sum_{j=1}^{n_\CC} \enorm{z_j - z_{\coarse,j}}^2 \Big]^{1/2}
		\, \eqreff{eq:uz-estimate-1}{\lesssim}
		\Big[ \sum_{j=1}^{n_\CC} \enorm{z_j - z_{\coarse,j}}^2 \Big]^{1/2}.
	\end{align*}
	This concludes the proof.
\end{proof}

\subsection{Error bound for parameter error}
The next lemma estimates the error in the derivative of the least-squares functional.

\begin{lemma}
	\label{lemma:error-Jderivative}
	For all $\q, \p \in \QQ$, it holds that
	\begin{equation}
	\label{eq:error-Jderivative}
	\begin{split}
		|J'[\p](\q) - J'_\coarse[\p](\q)|
		&\leq
		\enorm[\big]{u(\p) - u_\coarse(\p)} \enorm[\big]{z'(\q) - z'_\coarse(\q)}\\
		&\quad
		+ \enorm[\big]{u'(\q) - u'_\coarse(\q)} \enorm[\big]{z(\p) - z_\coarse(\p)}\\
		&\quad
		+ \enorm[\big]{u(\p) - u_\coarse(\p)}
		\enorm[\big]{u'(\q) - u'_\coarse(\q)}
		\sum_{j=1}^{n_\CC} \enorm[\big]{z_j - z_{\coarse,j}}^2.
	\end{split}
	\end{equation}
\end{lemma}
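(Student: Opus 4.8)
The goal is to control the difference $J'[\p](\q) - J'_\coarse[\p](\q)$ by a sum of products of component errors. The natural starting point is the representation of $J'$ already derived in the text. Recall from~\eqref{eq:Jprime} that
\begin{equation*}
	J'[\p](\q)
	=
	\scalarproduct[\big]{\G(u(\p)) - \G^\star}{\G(u'(\q))}_{\CC}
	+ \alpha \scalarproduct{\p}{\q}_{\QQ},
\end{equation*}
and analogously for the discrete functional with $u_\coarse$ and $u'_\coarse$ in place of $u$ and $u'$. Since the regularization term $\alpha \scalarproduct{\p}{\q}_{\QQ}$ is identical in both, it cancels, and we are left with estimating the difference of the two $\CC$-inner products. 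The first step is therefore to write that difference, via the usual ``add and subtract'' trick in a bilinear expression, as
\begin{equation*}
	\scalarproduct[\big]{\G(u(\p)) - \G(u_\coarse(\p))}{\G(u'(\q))}_{\CC}
	+ \scalarproduct[\big]{\G(u_\coarse(\p)) - \G^\star}{\G(u'(\q)) - \G(u'_\coarse(\q))}_{\CC}.
\end{equation*}

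**Rewriting via co-state components.** The key idea, which is what makes the product structure appear, is to express each $\CC$-inner product of the form $\scalarproduct{\cdot}{\G(w)}_{\CC}$ through the co-state equations~\eqref{eq:z-equation} and~\eqref{eq:zprime-equation}. For the first term I would use that $\G(u'(\q))$ is tested against something; writing $\scalarproduct[\big]{\boldsymbol{\xi}}{\G(v)}_{\CC} = a(v, z_{\boldsymbol{\xi}})$ where $z_{\boldsymbol{\xi}} = \sum_j \xi_j z_j$ is the associated co-state (and similarly at the discrete level), the Galerkin orthogonality for $u(\p) - u_\coarse(\p)$ lets me replace a continuous co-state by the difference of continuous and discrete co-states, turning $\scalarproduct{\G(u(\p)) - \G(u_\coarse(\p))}{\G(u'(\q))}_{\CC}$ into an expression bounded by $\enorm{u(\p) - u_\coarse(\p)}\,\enorm{z'(\q) - z'_\coarse(\q)}$ (here one must be a little careful: the relevant co-state for $\G(u'(\q))$ is exactly $z'(\q)$ from~\eqref{eq:zprime-equation}, since $z'(\p)$ solves $a(v, z'(\p)) = \scalarproduct{\G(u'_\coarse(\p))}{\G(v)}_{\CC}$). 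The second term is handled symmetrically: here $\G(u'(\q)) - \G(u'_\coarse(\q))$ is tested against $\G(u_\coarse(\p)) - \G^\star = \r_\coarse(\p)$, whose associated co-state is exactly $z(\p)$ by~\eqref{eq:z-equation}; using Galerkin orthogonality for $u'(\q) - u'_\coarse(\q)$ against $\XX_\coarse$ produces the bound $\enorm{u'(\q) - u'_\coarse(\q)}\,\enorm{z(\p) - z_\coarse(\p)}$.

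**The cross term.** The subtle point — and the source of the third, more complicated summand on the right-hand side of~\eqref{eq:error-Jderivative} — is that the Galerkin orthogonality arguments above do not apply cleanly because the co-states appearing at the discrete level, namely $z'_\coarse(\q)$ and $z_\coarse(\p)$, are built from the \emph{discrete} co-state components $z_{\coarse,j}$ but with coefficients that themselves depend on discrete quantities ($r_{\coarse,j}$, $G_j(u'_\coarse)$). When one tries to test with $z'(\q) - z_\coarse'(\q)$ and discretely with $z'_\coarse(\q)$, a mismatch term appears, which upon unwinding the definitions~\eqref{eq:zprime-combination} and Cauchy--Schwarz over $j = 1,\dots,n_\CC$ produces the factor $\sum_{j=1}^{n_\CC} \enorm{z_j - z_{\coarse,j}}^2$ multiplying $\enorm{u(\p) - u_\coarse(\p)}\,\enorm{u'(\q) - u'_\coarse(\q)}$. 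Concretely, the estimate will come from writing $z'(\q) - z'_\coarse(\q) = \sum_j \big( G_j(u'(\q)) z_j - G_j(u'_\coarse(\q)) z_{\coarse,j} \big)$, splitting this as $\sum_j G_j(u'(\q))(z_j - z_{\coarse,j}) + \sum_j \big(G_j(u'(\q)) - G_j(u'_\coarse(\q))\big) z_{\coarse,j}$, and noting that $|G_j(u'(\q)) - G_j(u'_\coarse(\q))| \lesssim \enorm{u'(\q) - u'_\coarse(\q)}$; the ``bad'' piece is the one that refuses to be absorbed by orthogonality and hence must be estimated by brute force, contributing the quadratic-in-$(z_j - z_{\coarse,j})$ term.

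**Main obstacle.** The routine parts are the bilinear add-and-subtract decomposition and the two Galerkin-orthogonality estimates. The genuine difficulty is the bookkeeping in the cross term: one has to track exactly which co-state (continuous $z(\p)$, $z'(\q)$ versus discrete $z_\coarse(\p)$, $z'_\coarse(\q)$, versus the ``half-discrete'' objects obtained by replacing only the components or only the coefficients) is the right test function at each stage so that the orthogonality relations~\eqref{eq:primal-components}, \eqref{eq:uprime-problem} (and their discrete analogues) can be invoked, and to identify the irreducible remainder that yields the $\sum_j \enorm{z_j - z_{\coarse,j}}^2$ factor. I expect this identification — making precise why that term is genuinely there and cannot be reduced to a product of two first-order errors — to be where the bulk of the care goes; the rest is triangle inequality, Cauchy--Schwarz in $\CC$, and continuity of the $G_j$.
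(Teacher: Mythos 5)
Your overall strategy is the same as the paper's: cancel the $\alpha$-terms, add-and-subtract to split $\scalarproduct{\r(\p)}{\r'(\q)}-\scalarproduct{\r_\coarse(\p)}{\r'_\coarse(\q)}$ into two pieces, and convert $\CC$-inner products into $a(\cdot,\cdot)$-pairings with co-states so that Galerkin orthogonality can be applied. Your treatment of the second piece (testing $\G(u'(\q)-u'_\coarse(\q))$ against $\r_\coarse(\p)$, whose co-state is $z(\p)$ by~\eqref{eq:z-equation}, then using orthogonality to pass to $z(\p)-z_\coarse(\p)$) is exactly the paper's argument and gives the second summand of~\eqref{eq:error-Jderivative} with constant one.

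The gap is in your account of the cross term. You locate it in a purported coefficient mismatch between $z'(\q)$ and $z'_\coarse(\q)$ and propose the splitting $z'(\q)-z'_\coarse(\q)=\sum_j G_j(u'(\q))(z_j-z_{\coarse,j})+\sum_j\big(G_j(u'(\q))-G_j(u'_\coarse(\q))\big)z_{\coarse,j}$. This misreads the definitions: by~\eqref{eq:zprime-equation}--\eqref{eq:zprime-combination} \emph{both} $z'(\q)$ and $z'_\coarse(\q)$ carry the same discrete coefficients $G_j(u'_\coarse(\q))$, so $z'(\q)-z'_\coarse(\q)=\sum_j G_j(u'_\coarse(\q))(z_j-z_{\coarse,j})$ and there is no mismatch there; moreover your second sum lies in $\XX_\coarse$ and would be annihilated by the Galerkin orthogonality of $u(\p)-u_\coarse(\p)$ anyway, while the bound $|G_j(u'(\q))-G_j(u'_\coarse(\q))|\lesssim\enorm{u'(\q)-u'_\coarse(\q)}$ would in any case introduce continuity constants absent from the constant-free statement. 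The actual source of the third summand is different: since $z'(\q)$ is the co-state of $\G(u'_\coarse(\q))$ (not of $\G(u'(\q))$), you must first write
\begin{equation*}
	\scalarproduct[\big]{\G(u(\p)-u_\coarse(\p))}{\G(u'(\q))}
	=
	\scalarproduct[\big]{\G(u(\p)-u_\coarse(\p))}{\G(u'(\q)-u'_\coarse(\q))}
	+ a\big(u(\p)-u_\coarse(\p),\,z'(\q)-z'_\coarse(\q)\big),
\end{equation*}
and the leftover first term is the cross term. It is then bounded by writing $G_j(v)=a(v,z_j)=a(v,z_j-z_{\coarse,j})$ for \emph{both} Galerkin errors $v=u(\p)-u_\coarse(\p)$ and $v=u'(\q)-u'_\coarse(\q)$, which turns it into $\sum_j a(u(\p)-u_\coarse(\p),z_j-z_{\coarse,j})\,a(u'(\q)-u'_\coarse(\q),z_j-z_{\coarse,j})$; Cauchy--Schwarz for $a(\cdot,\cdot)$ then yields exactly $\enorm{u(\p)-u_\coarse(\p)}\,\enorm{u'(\q)-u'_\coarse(\q)}\sum_j\enorm{z_j-z_{\coarse,j}}^2$. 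This double use of Galerkin orthogonality inside each $G_j$ is the idea missing from your sketch, and without it the quadratic factor in $z_j-z_{\coarse,j}$ does not appear.
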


\begin{proof}
	Let $\q, \p \in \QQ$.
	Note that $\r'(\q) = \G(u'(\q))$ and $\r'_\coarse(\q) = \G(u'_\coarse(\q))$.
	Therefore, we have that
	\begin{align}
	\nonumber
		J'[\p](\q) - &J'_\coarse[\p](\q)
		\eqreff{eq:Jprime}{=}
		\scalarproduct[\big]{\r(\p)}{\r'(\q)}
		- \scalarproduct[\big]{\r_\coarse(\p)}{\r'_\coarse(\q)}\\
	\label{eq:error-Jderivative-1}
		&=
		\scalarproduct[\big]{\r(\p) - \r_\coarse(\p)}{\r'(\q)}
		+ \scalarproduct[\big]{\r_\coarse(\p)}{\r'(\q) - \r'_\coarse(\q)}\\
	\nonumber
		&=
		\scalarproduct[\Big]{\r(\p) - \r_\coarse(\p)}{\G(u'(\q))}
		+ \scalarproduct[\Big]{\r_\coarse(\p)}{\G\big( u'(\q) - u'_\coarse(\q) \big)}.
	\end{align}
	The first term in~\eqref{eq:error-Jderivative-1} can be reformulated by inserting the term $\G( u'_\coarse(\q))$, the definition of $z'(\q)$, and the Galerkin orthogonality.
	This yields that
	\begin{align*}
		&\scalarproduct[\big]{\r(\p) - \r_\coarse(\p)}{\G(u'(\q))}\\
		&\quad =
		\scalarproduct[\Big]{\G \big(u(\p) - u_\coarse(\p) \big)}{\G\big( u'(\q) - u'_\coarse(\q) \big)}
		+ \scalarproduct[\Big]{\G \big(u(\p) - u_\coarse(\p) \big)}{\G\big( u'_\coarse(\q) \big)}\\
		&\quad \eqreff{eq:zprime-equation}{=}
		\scalarproduct[\Big]{\G \big(u(\p) - u_\coarse(\p) \big)}{\G\big( u'(\q) - u'_\coarse(\q) \big)}
		+ a \big( u(\p) - u_\coarse(\p), z'(\q) \big)\\
		&\quad =
		\scalarproduct[\Big]{\G \big(u(\p) - u_\coarse(\p) \big)}{\G\big( u'(\q) - u'_\coarse(\q) \big)}
		+ a \big( u(\p) - u_\coarse(\p), z'(\q) - z'_\coarse(\q) \big).
	\end{align*}
	We employ the definition of the co-state components $z_j$ and the Galerkin orthogonalities to obtain that
	\begin{align*}
		\scalarproduct[\big]{\G \big(u(\p) - u_\coarse(\p) \big)&}{\G\big( u'(\q) - u'_\coarse(\q) \big)}
		=
		\sum_{j=1}^{n_\CC} G_j \big( u(\p) - u_\coarse(\p) \big) \,
		G_j \big( u'(\q) - u'_\coarse(\q) \big)\\
		&\eqreff{eq:def-dual-components}{=}
		\sum_{j=1}^{n_\CC} a \big( u(\p) - u_\coarse(\p), z_j \big) \,
		a \big( u'(\q) - u'_\coarse(\q), z_j \big)\\
		&=
		\sum_{j=1}^{n_\CC} a \big( u(\p) - u_\coarse(\p), z_j - z_{\coarse,j} \big) \,
		a \big( u'(\q) - u'_\coarse(\q), z_j - z_{\coarse,j} \big).
	\end{align*}
	For the second term in~\eqref{eq:error-Jderivative-1}, we use the definition of $z(\q)$ and the Galerkin orthogonality to obtain that
	\begin{align*}
		\scalarproduct[\Big]{\r_\coarse(\p)}{\G\big( u'(\q) - u'_\coarse(\q) \big)}
		&\eqreff{eq:z-equation}{=}
		a \big( u'(\q) - u'_\coarse(\q), z(\p) \big)\\
		&=
		a \big( u'(\q) - u'_\coarse(\q), z(\p) - z_\coarse(\p) \big).
	\end{align*}
	Finally, the claim~\eqref{eq:error-Jderivative} results from combining above identities and using the Cauchy--Schwarz inequality.
\end{proof}

Finally, we combine the last auxiliary results to obtain an estimate for the error in the parameter.

\begin{lemma}
	\label{lemma:error-parameter}
	There exists a constant $C > 0$ such that the approximation error in the parameter can be estimated by
	\begin{align}
	\nonumber
		\norm{\p^\star - \p^\star_\coarse}_{\QQ}
		&\leq
		C \Big[ 
			\enorm[\big]{u(\p^\star_\coarse) - u_\coarse(\p^\star_\coarse)}
			\big[ \sum_{j=1}^{n_\CC}  \enorm{z_j - z_{\coarse,j}}^2 \big]^{1/2}\\
	\label{eq:error-parameter}		
		& \qquad \qquad \times
			\Big(
				1 + \big[ \sum_{j=1}^{n_\CC}  \enorm{z_j - z_{\coarse,j}}^2 \big]^{1/2}
				\big[ \sum_{i=1}^{n_\QQ} \enorm{u_i - u_{\coarse,i}}^2 \big]^{1/2}
			\Big)\\
	\nonumber
		& \qquad +
			\enorm[\big]{z(\p^\star_\coarse) - z_\coarse(\p^\star_\coarse)}
			\big[ \sum_{i=1}^{n_\QQ} \enorm{u_i - u_{\coarse,i}}^2 \big]^{1/2}
		\Big].
	\end{align}
	The constant $C > 0$ depends only on $\Omega$ and $\kappa$ as well as the data $\A$, $f_i$, $g_j$, $\f_i$, and $\g_j$.
\end{lemma}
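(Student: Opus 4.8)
The plan is to combine the two first-order optimality conditions \eqref{eq:first-order-condition} and \eqref{eq:first-order-condition-discrete}, exploit that the common quadratic structure turns the difference of gradients into the Hessian form, and then quantify the consistency error of the gradient by Lemma~\ref{lemma:error-Jderivative} together with Lemma~\ref{lemma:derivative-estimate}. First, I would abbreviate $\vec{\delta} := \p^\star - \p^\star_\coarse$ and observe that testing \eqref{eq:first-order-condition} with the admissible choice $\p = \p^\star_\coarse$ gives $J'[\p^\star](\vec{\delta}) \leq 0$, while testing \eqref{eq:first-order-condition-discrete} with $\p = \p^\star$ gives $J_\coarse'[\p^\star_\coarse](\vec{\delta}) \geq 0$; in particular $J'[\p^\star](\vec{\delta}) \leq J_\coarse'[\p^\star_\coarse](\vec{\delta})$. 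Since $J$ is quadratic, its gradient is affine in the first slot, so $J'[\p^\star](\vec{\delta}) - J'[\p^\star_\coarse](\vec{\delta}) = J''(\vec{\delta},\vec{\delta})$, and the coercivity \eqref{eq:locally-convex} of the (constant) Hessian, applied to the increment $\vec{\delta}$, yields
\begin{equation*}
	\kappa \, \norm{\p^\star - \p^\star_\coarse}_\QQ^2
	\leq
	J''(\vec{\delta},\vec{\delta})
	=
	J'[\p^\star](\vec{\delta}) - J'[\p^\star_\coarse](\vec{\delta})
	\leq
	J_\coarse'[\p^\star_\coarse](\vec{\delta}) - J'[\p^\star_\coarse](\vec{\delta})
	\leq
	\big| J'[\p^\star_\coarse](\vec{\delta}) - J_\coarse'[\p^\star_\coarse](\vec{\delta}) \big|.
\end{equation*}

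Next, I would apply Lemma~\ref{lemma:error-Jderivative} with $\p = \p^\star_\coarse$ and $\q = \vec{\delta}$ to bound the right-hand side by the three products appearing in \eqref{eq:error-Jderivative}, and then use Lemma~\ref{lemma:derivative-estimate} to bound $\enorm{u'(\vec{\delta}) - u'_\coarse(\vec{\delta})} \leq \norm{\vec{\delta}}_\QQ \big[\sum_{i=1}^{n_\QQ}\enorm{u_i - u_{\coarse,i}}^2\big]^{1/2}$ and $\enorm{z'(\vec{\delta}) - z'_\coarse(\vec{\delta})} \lesssim \norm{\vec{\delta}}_\QQ \big[\sum_{j=1}^{n_\CC}\enorm{z_j - z_{\coarse,j}}^2\big]^{1/2}$. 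Each of the three resulting terms then carries exactly one factor $\norm{\vec{\delta}}_\QQ = \norm{\p^\star - \p^\star_\coarse}_\QQ$, so (the case $\p^\star = \p^\star_\coarse$ being trivial) dividing by $\kappa \, \norm{\p^\star - \p^\star_\coarse}_\QQ$ already gives \eqref{eq:error-parameter}, up to reorganising the first and the third summand: both contain the factor $\enorm{u(\p^\star_\coarse) - u_\coarse(\p^\star_\coarse)} \big[\sum_{j=1}^{n_\CC}\enorm{z_j - z_{\coarse,j}}^2\big]^{1/2}$, and pulling it out produces exactly the bracket $\big(1 + \big[\sum_{j=1}^{n_\CC}\enorm{z_j - z_{\coarse,j}}^2\big]^{1/2}\big[\sum_{i=1}^{n_\QQ}\enorm{u_i - u_{\coarse,i}}^2\big]^{1/2}\big)$ in \eqref{eq:error-parameter}. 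The constant $C$ there ends up being $\kappa^{-1}$ times the maximum of $1$ and the constant of Lemma~\ref{lemma:derivative-estimate}, which explains the asserted dependence on $\Omega$, $\kappa$, and the data.

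Given the two preceding lemmas, the rest of the argument is essentially bookkeeping; the one step that needs care — and the only place I expect any friction — is the first display. There one must notice that the difference of the two \emph{continuous} gradients evaluated at $\vec{\delta}$ collapses to the Hessian form $J''(\vec{\delta},\vec{\delta})$, check that the two variational inequalities combine with the correct signs, and apply the coercivity \eqref{eq:locally-convex} to the increment $\vec{\delta} = \p^\star - \p^\star_\coarse$, which in general does not lie in $\QQ$; this last point is harmless in the situations of Remark~\ref{rem:ls-solve}\,{\rm (ii)}, where $J'' = \B\B^\intercal + \alpha\I$ is positive definite on all of $\R^{n_\QQ}$. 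No elliptic regularity beyond the $H^1$-conforming setting is needed anywhere.
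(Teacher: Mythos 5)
Your proposal is correct and follows essentially the same route as the paper's proof: combine the continuous and discrete first-order optimality conditions with the coercivity of the constant Hessian applied to the increment, bound the resulting gradient consistency error $J'[\p^\star_\coarse]-J'_\coarse[\p^\star_\coarse]$ via Lemma~\ref{lemma:error-Jderivative}, split off one factor $\norm{\p^\star-\p^\star_\coarse}_{\QQ}$ from each term via Lemma~\ref{lemma:derivative-estimate}, and divide. Your explicit remark that the coercivity~\eqref{eq:locally-convex} must be applied to $\p^\star-\p^\star_\coarse\notin\QQ$ (harmless since $J''=\B\B^\intercal+\alpha\I$ is constant) is in fact slightly more careful than the paper's corresponding one-line justification.
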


\begin{proof}
	In the following, let $\q := \p^\star_\coarse - \p^\star$.
	Note that the second order optimality condition~\eqref{eq:locally-convex} holds for all $\q \in \R^{n_\QQ}$, since $J''$ is independent of its linearization point.
	Therefore, we have that
	\begin{equation*}
		\kappa \norm{\q}_{\QQ}^2
		\eqreff{eq:locally-convex}{\leq}
		J''(\q,\q)
		\stackrel{\eqref{eq:Jprime-explicit}, \eqref{eq:hessian-explicit}}{=}
		J'[\p^\star_\coarse](\q) - J'[\p^\star](\q),
	\end{equation*}
	since $J'[\cdot](\q)$ is affine.
	With the continuous and discrete first order optimality conditions, \eqref{eq:first-order-condition} and~\eqref{eq:first-order-condition-discrete}, respectively, we see that
	\begin{align*}
		\kappa \norm{\q}_{\QQ}^2
		&\leq
		\big( J'[\p^\star_\coarse](\q) - J'_\coarse[\p^\star_\coarse](\q) \big)
		+ \big( J'_\coarse[\p^\star_\coarse](\q) - J'[\p^\star](\q) \big)\\
		&\leq
		J'[\p^\star_\coarse](\q) - J'_\coarse[\p^\star_\coarse](\q).
	\end{align*}
	This last expression can be further bounded by Lemma~\ref{lemma:error-Jderivative}:
	\begin{align*}
		\kappa \norm{\q}_{\QQ}^2
		&\eqreff{eq:error-Jderivative}{\leq}
		\enorm[\big]{u(\p^\star_\coarse) - u_\coarse(\p^\star_\coarse)}
		\enorm[\big]{z'(\q) - z_\coarse'(\q)}
		+ \enorm[\big]{u'(\q) - u_\coarse'(\q)}
		\enorm[\big]{z(\p^\star_\coarse) - z_\coarse(\p^\star_\coarse)}\\
		&\quad
		+ \enorm[\big]{u(\p^\star_\coarse) - u_\coarse(\p^\star_\coarse)}
		\enorm[\big]{u'(\q) - u_\coarse'(\q)}
		\sum_{j=1}^{n_\CC} \enorm[\big]{z_j - z_{\coarse,j}}^2.
	\end{align*}	
	From the right-hand side, a factor $\norm{\q}_{\QQ}$ can be split off from the $\q$-dependent terms by Lemma~\ref{lemma:derivative-estimate} to obtain that
	\begin{align*}
		\kappa \norm{\q}_{\QQ}^2
		&\eqreff{eq:derivative-estimate}{\lesssim}
		\enorm[\big]{u(\p^\star_\coarse) - u_\coarse(\p^\star_\coarse)}
		\norm{\q}_{\QQ} \Big[ \sum_{j=1}^{n_{\CC}} \enorm[\big]{z_j - z_{\coarse,j}}^2 \Big]^{1/2}\\
		&\quad
		+ \norm{\q}_{\QQ} \Big[ \sum_{i=1}^{n_{\QQ}} \enorm[\big]{u_i - u_{\coarse,i}}^2 \Big]^{1/2} \enorm[\big]{z(\p^\star_\coarse) - z_\coarse(\p^\star_\coarse)}\\
		&\quad
		+ \enorm[\big]{u(\p^\star_\coarse) - u_\coarse(\p^\star_\coarse)}
		\norm{\q}_{\QQ} \Big[ \sum_{i=1}^{n_{\QQ}} \enorm[\big]{u_i - u_{\coarse,i}}^2 \Big]^{1/2}
		\sum_{j=1}^{n_\CC} \enorm[\big]{z_j - z_{\coarse,j}}^2.
	\end{align*}
	The claim follows by division through $\kappa \norm{\q}_{\QQ} = \kappa \norm{\p^\star - \p^\star_\coarse}_{\QQ}$.
\end{proof}

From~\eqref{eq:error-parameter}, we can absorb higher order terms to deduce an upper bound for the parameter error, which is the assertion of Theorem~\ref{th:parameter-apriori}.

\begin{proof}[Proof of Theorem~\ref{th:parameter-apriori}]
	Note that due to the stability estimate~\eqref{eq:apriori-uz} there holds for all $0 \leq i \leq n_{\QQ}$ and all $1 \leq j \leq n_{\CC}$ that
	\begin{equation*}
		\enorm{u_i - u_{\coarse,i}}
		\leq
		2\Cproblem \, \big[ \norm{f_i}_{L^2(\Omega)} + \norm{\f_i}_{L^2(\Omega)} \big],
		\quad
		\enorm{z_j - z_{\coarse,j}}
		\leq
		2\Cproblem \, \big[ \norm{g_j}_{L^2(\Omega)} + \norm{\g_j}_{L^2(\Omega)} \big].
	\end{equation*}
	Using these estimates on the factor in~\eqref{eq:error-parameter}, we obtain that
	\begin{equation}
	\label{eq:rough-estimate}
	\begin{split}
		1 + \Big[ \sum_{j=1}^{n_\CC}  \enorm{z_j - z_{\coarse,j}}^2 \Big]^{1/2}
		\Big[ \sum_{i=1}^{n_\QQ} \enorm{u_i - u_{\coarse,i}}^2 \Big]^{1/2}
		\lesssim
		1,
	\end{split}
	\end{equation}
	where the hidden constant depends only on $\Cproblem$ and the data $f_i$, $g_j$, $\f_i$, and $\g_j$.
	Hence, \eqref{eq:error-parameter} reads as
	\begin{align*}
		\norm{\p^\star - \p^\star_\coarse}_{\QQ}
		&\lesssim
		\enorm[\big]{u(\p^\star_\coarse) - u_\coarse(\p^\star_\coarse)}
		\Big[ \sum_{j=1}^{n_\CC}  \enorm{z_j - z_{\coarse,j}}^2 \Big]^{1/2}\\
		&\qquad
		+ \enorm[\big]{z(\p^\star_\coarse) - z_\coarse(\p^\star_\coarse)}
		\Big[ \sum_{i=1}^{n_\QQ} \enorm{u_i - u_{\coarse,i}}^2 \Big]^{1/2}.
	\end{align*}
	Lemma~\ref{lemma:uz-estimate} allows to bound the energy norms to finally obtain~\eqref{eq:parameter-apriori}.
	This concludes the proof.
\end{proof}

\begin{remark}
	Note that our adaptive Algorithm~\ref{algorithm} guarantees convergence
	\begin{equation*}
		\Big[ \sum_{j=1}^{n_\CC}  \enorm{z_j - z_{\coarse,j}}^2 \Big]^{1/2}
		\Big[ \sum_{i=0}^{n_\QQ} \enorm{u_i - u_{\coarse,i}}^2 \Big]^{1/2}
		\to 0
		\quad
		\text{as } \ell \to \infty.
	\end{equation*}
	In particular, this implies that the estimate~\eqref{eq:rough-estimate} is too pessimistic, as the estimated term asymptotically tends to $1$.
\end{remark}


\section{Proof of Theorems~\ref{th:linear-convergence} and~\ref{th:optimal-rates}}\label{sec:optimalrates}

\subsection{Linear convergence}
We aim to employ the analysis of goal-oriented AFEM done by \cite{fgh+16,fpz16} to prove convergence rates with optimal algebraic rates for the error estimator.
To this end, we first note that the axioms presented in Proposition~\ref{prop:axioms} also hold for the sums of state and co-state components, respectively.
For convenience of the reader, we state the main intermediate results for proving Theorems~\ref{th:linear-convergence} and~\ref{th:optimal-rates}.

In view of Algorithm~\ref{algorithm}, we define estimators $\widetilde{\eta}_\coarse$ on $\XX_\coarse^{n_\QQ+1}$ and $\widetilde{\zeta}_\coarse$ on $\XX_\coarse^{n_\CC}$ by
\begin{equation}
\label{eq:tilde-estimators}
\begin{split}
	\widetilde{\eta}_\coarse(T, \v_\coarse)^2
	&:=
	\sum_{i=0}^{n_{\QQ}} \eta_{\coarse,i}(T, v_{\coarse,i})^2
	\quad
	\text{for all } \v_\coarse \in \XX_\coarse^{n_\QQ+1},\\
	\widetilde{\zeta}_\coarse(T, \w_\coarse)^2
	&:=
	\sum_{j=1}^{n_{\CC}} \zeta_{\coarse,j}(T, w_{\coarse,j})^2
	\quad
	\text{for all } \w_\coarse \in \XX_\coarse^{n_\CC}\\
\end{split}
\end{equation}
such that there holds
\begin{equation}
\label{eq:estimator-identity}
	\varrho_\coarse(T)^2
	=
	2 \, \widetilde{\eta}_\coarse(T, \u_\coarse)^2 \widetilde{\zeta}_\coarse(T, \z_\coarse)^2,
\end{equation}
where we set $\u_\coarse = (u_{\coarse,i})_{i=0}^{n_\QQ} \in \XX_\coarse^{n_\QQ+1}$ and $\z_\coarse = (z_{\coarse,j})_{j=1}^{n_\CC} \in \XX_\coarse^{n_\CC}$.
We employ the same notation, e.g., $\widetilde{\eta}_\coarse(\UU_\coarse, \v_\coarse)$ or $\widetilde{\zeta}_\coarse(\w_\coarse)$, as for $\eta_{\coarse,i}$ and $\zeta_{\coarse,j}$ in~\eqref{eq:abbreviations-1}--\eqref{eq:abbreviations-2}.
Moreover, we equip the spaces $\XX^{n_\QQ+1}$ and $\XX^{n_\CC}$ with the norms
\begin{equation*}
	\enorm{\v}_{n_\QQ} := \Big[ \sum_{i=0}^{n_{\QQ}} \enorm{v_i}^2 \Big]^{1/2}
	~
	\text{for all } \v \in \XX^{n_\QQ+1},
	\quad
	\enorm{\w}_{n_\CC} := \Big[ \sum_{j=1}^{n_{\CC}} \enorm{w_j}^2 \Big]^{1/2}
	~
	\text{for all } \w \in \XX^{n_\CC},
\end{equation*}
and note that $\XX_\coarse^{n_\QQ+1} \subseteq \XX^{n_\QQ+1}$ as well as $\XX_\coarse^{n_\CC} \subseteq \XX^{n_\CC}$ for all $\TT_\coarse \in \T$.
Then, the properties~\eqref{assumption:stab}--\eqref{assumption:drel} from Proposition~\ref{prop:axioms} also hold for $\widetilde{\eta}_\coarse$ and $\widetilde{\zeta}_\coarse$:

\begin{proposition}
\label{prop:axioms-sum}
	Let $\Cstab, \Crel, \Cdrel > 0$, and $0 < \qred < 1$ be the constants from Proposition~\ref{prop:axioms}.
	Then, for all $\TT_\coarse \in \T(\TT_0)$, and all $\TT_\fine \in \T(\TT_\coarse)$, there hold the following properties:
	
	\begin{enumerate}[label={(A\arabic*$^+$)}, ref={A\arabic*$^+$}]
		\bf
		\item Stability:
		\label{product:stab}
		\rm
		For all $\v_\fine \in \XX_\fine^{n_\QQ+1}$, $\v_\coarse \in \XX_\coarse^{n_\QQ+1}$, $\w_\fine \in \XX_\fine^{n_\CC}$, $\w_\coarse \in \XX_\coarse^{n_\CC}$, and $\UU_\coarse \subseteq \TT_\fine \cap \TT_\coarse$, it holds that
		\begin{align*}
			\big|
				\widetilde{\eta}_\fine(\UU_\coarse, \v_\fine)
				- \widetilde{\eta}_\coarse(\UU_\coarse, \v_\coarse)
			\big|
			&\leq
			\Cstab \, \enorm{\v_\fine - \v_\coarse}_{n_\QQ},\\
			\big|
				\widetilde{\zeta}_{\fine}(\UU_\coarse, \w_\fine)
				- \widetilde{\zeta}_{\coarse}(\UU_\coarse, \w_\coarse)
			\big|
			&\leq
			\Cstab \, \enorm{\w_\fine - \w_\coarse}_{n_\CC}.
		\end{align*}
		
		\bf
		\item Reduction:
		\label{product:red}
		\rm
		For all $\v_\coarse \in \XX_\coarse^{n_\QQ+1}$ and $\w_\coarse \in \XX_\coarse^{n_\CC}$, it holds that
		\begin{equation*}
			\widetilde{\eta}_{\fine}(\TT_\fine \backslash \TT_\coarse, \v_\coarse)
			\leq
			\qred \, \widetilde{\eta}_{\coarse}(\TT_\coarse \backslash \TT_\fine, \v_\coarse),
			\quad
			\widetilde{\zeta}_{\fine}(\TT_\fine \backslash \TT_\coarse, \w_\coarse)
			\leq
			\qred \, \widetilde{\zeta}_{\coarse}(\TT_\coarse \backslash \TT_\fine, \w_\coarse).
		\end{equation*}
		
		\bf
		\item Reliability:
		\label{product:rel}
		\rm
		The state and co-state components $\u_\coarse = (u_{\coarse,i})_{i=0}^{n_\QQ} \in \XX_\coarse^{n_\QQ+1}$ and $\z_\coarse = (z_{\coarse,j})_{j=1}^{n_\CC} \in \XX_\coarse^{n_\CC}$ satisfy that
		\begin{equation*}
			\enorm{\u - \u_{\coarse}}_{n_\QQ}
			\leq
			\Crel \, \widetilde{\eta}_{\coarse}
			\quad \text{and} \quad
			\enorm{\z - \z_{\coarse}}_{n_\CC}
			\leq
			\Crel \, \widetilde{\zeta}_{\coarse}.
		\end{equation*}
		
		\bf
		\item Discrete reliability:
		\label{product:drel}
		\rm
		The state and co-state components $\u_\coarse \in \XX_\coarse^{n_\QQ+1}$, $\u_\fine \in \XX_\fine^{n_\QQ+1}$, $\z_\coarse \in \XX_\coarse^{n_\CC}$, and $\z_\fine \in \XX_\fine^{n_\CC}$ satisfy that
		\begin{equation*}
			\enorm{\u_{\fine} - \u_{\coarse}}_{n_\QQ}
			\leq
			\Cdrel \, \widetilde{\eta}_{\coarse}(\TT_\coarse \backslash \TT_\fine),
			\quad
			\enorm{\z_{\fine} - \z_{\coarse}}_{n_\CC}
			\leq
			\Cdrel \, \widetilde{\zeta}_{\coarse}(\TT_\coarse \backslash \TT_\fine).
		\end{equation*}
	\end{enumerate}
\end{proposition}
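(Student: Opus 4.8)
The plan is to deduce each of the four properties \eqref{product:stab}--\eqref{product:drel} directly from its componentwise counterpart \eqref{assumption:stab}--\eqref{assumption:drel} in Proposition~\ref{prop:axioms}, using only elementary facts about the Euclidean norm. The starting observation is that, by the definition~\eqref{eq:tilde-estimators} together with the additivity of the element contributions over $T$ (Fubini on the double sum over $T$ and over the component index), for every admissible $\UU_\coarse$ and every $\v_\coarse\in\XX_\coarse^{n_\QQ+1}$ the number $\widetilde\eta_\coarse(\UU_\coarse,\v_\coarse)$ equals the Euclidean norm in $\R^{n_\QQ+1}$ of the vector $\big(\eta_{\coarse,i}(\UU_\coarse,v_{\coarse,i})\big)_{i=0}^{n_\QQ}$, and likewise $\widetilde\zeta_\coarse(\UU_\coarse,\w_\coarse)$ is the Euclidean norm in $\R^{n_\CC}$ of $\big(\zeta_{\coarse,j}(\UU_\coarse,w_{\coarse,j})\big)_{j=1}^{n_\CC}$. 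Since the arguments for $\widetilde\zeta_\coarse$ are verbatim those for $\widetilde\eta_\coarse$, I would spell out only the latter.

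For stability \eqref{product:stab}, I would apply the reverse triangle inequality for the Euclidean norm to bound $\big|\widetilde\eta_\fine(\UU_\coarse,\v_\fine)-\widetilde\eta_\coarse(\UU_\coarse,\v_\coarse)\big|$ by $\big[\sum_{i=0}^{n_\QQ}\big|\eta_{\fine,i}(\UU_\coarse,v_{\fine,i})-\eta_{\coarse,i}(\UU_\coarse,v_{\coarse,i})\big|^2\big]^{1/2}$ (legitimate since $\UU_\coarse\subseteq\TT_\fine\cap\TT_\coarse$ and $v_{\fine,i}\in\XX_\fine$, $v_{\coarse,i}\in\XX_\coarse$ for each $i$), then invoke \eqref{assumption:stab} in each summand, so that squaring and summing the bounds $\Cstab\,\enorm{v_{\fine,i}-v_{\coarse,i}}$ yields exactly $\Cstab\,\enorm{\v_\fine-\v_\coarse}_{n_\QQ}$. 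For reduction \eqref{product:red}, note that $\XX_\coarse^{n_\QQ+1}\subseteq\XX_\fine^{n_\QQ+1}$ makes each component $v_{\coarse,i}$ a valid argument on $\TT_\fine$; squaring \eqref{assumption:red} componentwise and summing over $i$ gives $\widetilde\eta_\fine(\TT_\fine\backslash\TT_\coarse,\v_\coarse)^2\le\qred^2\,\widetilde\eta_\coarse(\TT_\coarse\backslash\TT_\fine,\v_\coarse)^2$, and taking square roots produces the claim with the same constant $\qred$.

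For reliability \eqref{product:rel} and discrete reliability \eqref{product:drel} I would simply expand the squared product norms, $\enorm{\u-\u_\coarse}_{n_\QQ}^2=\sum_{i=0}^{n_\QQ}\enorm{u_i-u_{\coarse,i}}^2$ (resp.\ $\enorm{\u_\fine-\u_\coarse}_{n_\QQ}^2=\sum_{i=0}^{n_\QQ}\enorm{u_{\fine,i}-u_{\coarse,i}}^2$), estimate each summand by \eqref{assumption:rel} (resp.\ \eqref{assumption:drel}), and recognise the outcome as $\Crel^2\,\widetilde\eta_\coarse^2$ (resp.\ $\Cdrel^2\,\widetilde\eta_\coarse(\TT_\coarse\backslash\TT_\fine)^2$), using the convention that $\widetilde\eta_\coarse$ and $\widetilde\eta_\coarse(\TT_\coarse\backslash\TT_\fine)$ are evaluated at $\u_\coarse=(u_{\coarse,i})_{i=0}^{n_\QQ}$. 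None of these steps poses a genuine difficulty — the whole proof is a $\ell^2$-lifting of the componentwise axioms; the only point that requires a moment's care is the reverse-triangle step in \eqref{product:stab}, i.e.\ comparing the two estimator sums through the $\ell^2$-norm of the vector of componentwise differences rather than term by term, which is the standard mechanism by which the axioms of adaptivity pass to the goal-oriented/product setting, cf.\ \cite{fpz16}.
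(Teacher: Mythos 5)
Your proof is correct and follows essentially the same route as the paper: the reverse triangle inequality for the Euclidean norm combined with componentwise stability~\eqref{assumption:stab} gives~\eqref{product:stab}, and the remaining properties~\eqref{product:red}--\eqref{product:drel} are obtained exactly as you describe, by squaring, summing, and taking square roots of the componentwise axioms (the paper merely states that these ``follow directly''). No gaps.
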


\begin{proof}
	For stability~\eqref{product:stab} of the state, the inverse triangle inequality proves that
	\begin{align*}
		\big|
			\widetilde{\eta}_\fine(\UU_\coarse, \v_\fine)
			- \widetilde{\eta}_\coarse(\UU_\coarse, \v_\coarse)
		\big|
		&=
		\Big|
			\Big[ \sum_{i=0}^{n_{\QQ}} \eta_{\fine,i}(\UU_\coarse, v_{\fine,i})^2 \Big]^{1/2}
			- \Big[ \sum_{i=0}^{n_{\QQ}} \eta_{\coarse,i}(\UU_\coarse, v_{\coarse,i})^2 \Big]^{1/2}
		\Big|\\
		&\leq
		\Big|
			\sum_{i=0}^{n_{\QQ}} \big[ \eta_{\fine,i}(\UU_\coarse, v_{\fine,i}) 
			- \eta_{\coarse,i}(\UU_\coarse, v_{\coarse,i}) \big]^2
		\Big|^{1/2}\\
		&\eqreff{assumption:stab}{\leq}
		\Cstab \, \Big|
			\sum_{i=0}^{n_{\QQ}}  \enorm{v_{\fine,i} - v_{\coarse,i}}^2
		\Big|^{1/2}
		=
		\Cstab \, \enorm{\v_{\fine} - \v_{\coarse}}_{n_\QQ}.
	\end{align*}
	The estimate for the co-state follows analogously.
	Finally, the properties~\eqref{product:red}--\eqref{product:drel} follow directly from the corresponding properties from Proposition~\ref{prop:axioms}.
\end{proof}

Since the problems for the state and co-state components, \eqref{eq:primal-components} and~\eqref{eq:def-dual-components}, fit into the Lax--Milgram setting, there hold the Pythagoras identities
\begin{align*}
\label{eq:quasi-orth}
	\enorm{\u - \u_{\ell+n}}_{n_{\QQ}}^2
	+ \enorm{\u_{\ell+n} - \u_{\ell}}_{n_{\QQ}}^2
	&=
	\enorm{\u - \u_{\ell}}_{n_{\QQ}}^2,\\[1ex]
	\enorm{\z - \z_{\ell+n}}_{n_{\CC}}^2
	+ \enorm{\z_{\ell+n} - \z_{\ell}}_{n_{\CC}}^2
	&=
	\enorm{\z - \z_{\ell}}_{n_{\CC}}^2,
\end{align*}
for all $\ell, n \in \N_0$.
Overall, we get the following Proposition as an immediate consequence from, e.g., \cite[Theorem~12]{fpz16}.
From this, Theorem~\ref{th:linear-convergence} follows readily with~\eqref{eq:estimator-identity}.

\begin{proposition}
	Suppose \eqref{assumption:stab}--\eqref{assumption:rel} and $0 < \theta \leq 1$.
	Then, Algorithm~\ref{algorithm} guarantees linear convergence
	\begin{equation}
	\label{eq:cited-linear-convergence}
		\widetilde{\eta}_{\ell + n} \widetilde{\zeta}_{\ell + n}
		\leq
		\Clin \qlin^{n} \, \widetilde{\eta}_{\ell} \widetilde{\zeta}_{\ell}
		\quad \text{for all } \ell, n \in \N_0.
	\end{equation}
	The constants $\Clin > 0 $ and $0 < \qlin < 1$ depend only on $\Cstab$, $\qred$, $\Crel$, and the (arbitrary) adaptivity parameter $0 < \theta \le 1$.
	\qed
\end{proposition}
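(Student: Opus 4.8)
The plan is to cast $\widetilde{\eta}_\coarse$ and $\widetilde{\zeta}_\coarse$ into the abstract goal‑oriented AFEM framework of~\cite{fpz16} and to invoke its linear‑convergence result, with $\widetilde{\eta}_\coarse$ (whose exact limit is $\u = (u_i)_{i=0}^{n_\QQ}$) playing the role of the primal estimator and $\widetilde{\zeta}_\coarse$ (whose exact limit is $\z = (z_j)_{j=1}^{n_\CC}$) that of the dual/goal estimator. First, I would note that all structural hypotheses are in place: Proposition~\ref{prop:axioms-sum} supplies stability~\eqref{product:stab}, reduction~\eqref{product:red}, and reliability~\eqref{product:rel} for both estimators, while the two Pythagoras identities recorded above furnish exact quasi‑orthogonality for the Galerkin errors $\enorm{\u - \u_\ell}_{n_\QQ}$ and $\enorm{\z - \z_\ell}_{n_\CC}$. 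These are exactly the assumptions under which~\cite[Theorem~12]{fpz16} applies, and its conclusion is~\eqref{eq:cited-linear-convergence}; the stated dependence of $\Clin$ and $\qlin$ on only $\Cstab$, $\qred$, $\Crel$, and $\theta$ is read off from that proof. Theorem~\ref{th:linear-convergence} then follows from~\eqref{eq:estimator-identity}. (The degenerate case $\widetilde{\eta}_\ell\widetilde{\zeta}_\ell = 0$ is trivial: by reliability the corresponding error then vanishes, hence it vanishes on all finer meshes, and so does the residual estimator, so the product stays zero.)

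For the reader it is worth spelling out the mechanism behind~\cite[Theorem~12]{fpz16} in our notation. The key observation is that the combined marking decouples: from the local decomposition~\eqref{eq:weighted-estimator} one has $\varrho_\ell(\MM_\ell)^2 = \widetilde{\eta}_\ell^2\,\widetilde{\zeta}_\ell(\MM_\ell)^2 + \widetilde{\eta}_\ell(\MM_\ell)^2\,\widetilde{\zeta}_\ell^2$ and $\varrho_\ell^2 = 2\,\widetilde{\eta}_\ell^2\,\widetilde{\zeta}_\ell^2$, so dividing $\varrho_\ell(\MM_\ell)^2 \ge \theta\,\varrho_\ell^2$ by $\widetilde{\eta}_\ell^2\,\widetilde{\zeta}_\ell^2$ gives
\[
	\frac{\widetilde{\eta}_\ell(\MM_\ell)^2}{\widetilde{\eta}_\ell^2}
	+ \frac{\widetilde{\zeta}_\ell(\MM_\ell)^2}{\widetilde{\zeta}_\ell^2}
	\ge 2\theta .
\]
Hence at every step $\ell$ the marked set $\MM_\ell$ satisfies the Dörfler criterion $\widetilde{\eta}_\ell(\MM_\ell)^2 \ge \theta\,\widetilde{\eta}_\ell^2$ or the Dörfler criterion $\widetilde{\zeta}_\ell(\MM_\ell)^2 \ge \theta\,\widetilde{\zeta}_\ell^2$ (or both). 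Writing $I_\eta \subseteq \N_0$ for the set of steps where the first inequality holds and $I_\zeta := \N_0 \setminus I_\eta$ for the remaining steps (where then the second holds), the standard single‑estimator machinery -- estimator reduction from~\eqref{product:stab}--\eqref{product:red}, combined with the Pythagoras identity and reliability~\eqref{product:rel} -- shows that the weighted quasi‑error $\Delta_\ell^u := \enorm{\u-\u_\ell}_{n_\QQ}^2 + \mu\,\widetilde{\eta}_\ell^2$ (for a fixed small $\mu>0$) contracts with a factor $q_\eta<1$ at every $\ell \in I_\eta$, and symmetrically $\Delta_\ell^z := \enorm{\z-\z_\ell}_{n_\CC}^2 + \mu\,\widetilde{\zeta}_\ell^2$ contracts at every $\ell \in I_\zeta$; this single‑estimator step is valid for arbitrary $0<\theta\le1$ (cf.~\cite{ckns08} and~\cite{axioms}). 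Using $\widetilde{\eta}_\ell^2 \simeq \Delta_\ell^u$ and $\widetilde{\zeta}_\ell^2 \simeq \Delta_\ell^z$ (by reliability) together with $\#(I_\eta\cap[0,\ell)) + \#(I_\zeta\cap[0,\ell)) = \ell$, the geometric decay of $\Delta_\ell^u$ along $I_\eta$ and of $\Delta_\ell^z$ along $I_\zeta$ multiply up to~\eqref{eq:cited-linear-convergence}.

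The main obstacle is precisely this last coupling. Since the combined Dörfler marking reduces, at any given iteration, only \emph{one} of the two factors, neither $\widetilde{\eta}_\ell$ nor $\widetilde{\zeta}_\ell$ decays geometrically on its own, and on the steps outside its own marked subsequence each quasi‑error may even grow -- by the bounded factor and the perturbation term entering the estimator reduction. One must therefore interleave the two single‑estimator arguments along the complementary index sets $I_\eta$ and $I_\zeta$ and control the factor that is \emph{not} marked at a step, which is where the quasi‑monotonicity of the residual estimators and a careful bookkeeping of the perturbations $\enorm{\u_{\ell+1}-\u_\ell}_{n_\QQ}^2$ and $\enorm{\z_{\ell+1}-\z_\ell}_{n_\CC}^2$ enter; these perturbations are summable because they telescope against the Pythagoras identities. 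This bookkeeping is exactly the content of~\cite{fpz16} (see also~\cite{fgh+16}), and~\eqref{eq:cited-linear-convergence} is the special case needed here.
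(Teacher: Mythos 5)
Your proposal is correct and takes essentially the same route as the paper: the paper's proof consists precisely of verifying the axioms (Proposition~\ref{prop:axioms-sum}) and the Pythagoras identities and then invoking \cite[Theorem~12]{fpz16}, which is exactly your first paragraph. Your additional sketch of the mechanism behind that cited theorem (the decoupling of the combined D\"orfler marking into the two single-estimator criteria and the interleaved contraction along the complementary index sets) is accurate but goes beyond what the paper itself writes down.
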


\subsection{Proof of optimal rates}
For all $r > 0$, we define the (combined) approximation classes
\begin{equation}
\label{eq:vector-approximation-classes}
\begin{split}
	\norm{\u}_{\mathbb{A}_r}
	&:=
	\sup_{N \in \N_0}
	(N+1)^r \min_{\TT_{\rm opt} \in \T(N)} \widetilde{\eta}_{\rm opt}(\u_{\rm opt})
	\in [0,\infty],\\
	\norm{\z}_{\mathbb{A}_r}
	&:=
	\sup_{N \in \N_0}
	(N+1)^r \min_{\TT_{\rm opt} \in \T(N)} \widetilde{\zeta}_{\rm opt}(\z_{\rm opt})
	\in [0,\infty].
\end{split}
\end{equation}
For these, we get the following result from, e.g., \cite[Theorem~13]{fpz16}.

\begin{proposition}
\label{prop:cited-optimal-rates}
	Let $0 < \theta < \theta_\opt := (1+\Cstab^2\Cdrel^2)^{-1}$.
	Suppose that the set of marked elements $\MM_\ell$ in Algorithm~\ref{algorithm}{\rm(iv)} has minimal cardinality.
	Let $s, t > 0$ with $\norm{\u}_{\mathbb{A}_{s}} + \norm{\z}_{\mathbb{A}_{t}} < \infty$.
	Then, there exists a constant $\expandafter\widetilde\Copt>0$ such that
	\begin{equation}
	\label{eq:cited-optimal-rates}
		\widetilde{\eta}_{\ell} \widetilde{\zeta}_{\ell}
		\leq
		\expandafter\widetilde\Copt \,
		\norm{\u}_{\mathbb{A}_{s}} \norm{\z}_{\mathbb{A}_{t}}
		\big( \#\TT_\ell - \#\TT_0 \big)^{-(s+t)}
		\quad
		\text{for all } \ell \in \N_0.
	\end{equation}
	The constant $\expandafter\widetilde\Copt$ depends only on $\Cstab$, $\qred$, $\Crel$, $\Cdrel$, $\Cmark$, $\theta$, $s$, $t$, and the properties~\eqref{eq:mesh-sons}--\eqref{eq:mesh-closure} of the mesh-refinement.
	\qed
\end{proposition}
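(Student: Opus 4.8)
The plan is to transfer the abstract optimality analysis for goal-oriented AFEM of \cite{fpz16,fgh+16} to the product estimator $\widetilde\eta_\ell\widetilde\zeta_\ell$, exploiting that both factors satisfy the axioms of adaptivity in the product form of Proposition~\ref{prop:axioms-sum}. Two ingredients are already at hand: the product axioms \eqref{product:stab}--\eqref{product:drel} and the linear convergence \eqref{eq:cited-linear-convergence}. The overall strategy is the standard one: first bound the cardinality $\#\MM_\ell$ of the minimal marked set in each step by a negative power of the current product estimator, and then sum these bounds by means of the closure estimate \eqref{eq:mesh-closure} together with a geometric series driven by \eqref{eq:cited-linear-convergence}.

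First I would record the quasi-optimality of D\"orfler marking for each factor separately. Combining stability \eqref{product:stab} with discrete reliability \eqref{product:drel} in the usual way (cf.~\cite{ckns08,stevenson07,axioms}), one shows that for the chosen $0<\theta<\theta_\opt=(1+\Cstab^2\Cdrel^2)^{-1}$ there is a threshold $0<\mu<1$ such that, whenever a refinement $\TT_\fine\in\T(\TT_\ell)$ satisfies $\widetilde\eta_\fine\le\mu\,\widetilde\eta_\ell$, the refined region already fulfils the primal D\"orfler condition $\widetilde\eta_\ell(\TT_\ell\setminus\TT_\fine)^2\ge\theta\,\widetilde\eta_\ell^2$, and symmetrically for $\widetilde\zeta$. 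Next, for fixed $\ell$ I would invoke the approximation-class assumption through \eqref{eq:vector-approximation-classes}: choosing near-optimal meshes $\TT_u,\TT_z\in\T$ realising the respective infima up to a factor, overlaying them with $\TT_\ell$, and using quasi-monotonicity of the estimators (a consequence of \eqref{product:stab} and \eqref{product:rel}, with constant $\Cmon$) together with the overlay \eqref{eq:mesh-overlay} and son \eqref{eq:mesh-sons} estimates, yields refined regions $\MM_\ell^u,\MM_\ell^z\subseteq\TT_\ell$ that satisfy the two D\"orfler conditions with controlled cardinalities $\#\MM_\ell^u\lesssim\norm{\u}_{\mathbb{A}_s}^{1/s}\,\widetilde\eta_\ell^{-1/s}$ and $\#\MM_\ell^z\lesssim\norm{\z}_{\mathbb{A}_t}^{1/t}\,\widetilde\zeta_\ell^{-1/t}$.

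The heart of the argument --- and the point where the product structure pays off --- is that reducing only \emph{one} of the two factors already reduces the product. Since the minimal set $\MM_\ell$ in Algorithm~\ref{algorithm}{\rm(iv)} satisfies the combined D\"orfler condition encoded in $\varrho_\ell$, and since either $\MM_\ell^u$ or $\MM_\ell^z$ individually already forces this combined condition, minimality gives $\#\MM_\ell\lesssim\min\{\#\MM_\ell^u,\#\MM_\ell^z\}$. Applying the elementary interpolation $\min\{A,B\}\le A^{s/(s+t)}B^{t/(s+t)}$ to the two cardinality bounds collapses the minimum into a single power of the product,
\[
	\#\MM_\ell
	\lesssim
	\big(\norm{\u}_{\mathbb{A}_s}\,\norm{\z}_{\mathbb{A}_t}\big)^{1/(s+t)}\,\big(\widetilde\eta_\ell\widetilde\zeta_\ell\big)^{-1/(s+t)},
\]
which is exactly what produces the summed rate $s+t$ rather than the individual rates $s$ and $t$. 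Summing over $j=0,\dots,\ell-1$ via the closure estimate \eqref{eq:mesh-closure} and bounding $(\widetilde\eta_j\widetilde\zeta_j)^{-1/(s+t)}$ through the linear convergence \eqref{eq:cited-linear-convergence}, the resulting series is geometric with ratio $\qlin^{1/(s+t)}<1$ and hence uniformly summable; rearranging the resulting inequality $\#\TT_\ell-\#\TT_0\lesssim(\norm{\u}_{\mathbb{A}_s}\norm{\z}_{\mathbb{A}_t})^{1/(s+t)}(\widetilde\eta_\ell\widetilde\zeta_\ell)^{-1/(s+t)}$ then yields \eqref{eq:cited-optimal-rates}.

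The main obstacle is the product step: one must verify both that the minimal combined marking is genuinely controlled by the \emph{cheaper} of the two separate markings and that the interpolation exponent $s/(s+t)$ correctly balances the two powers, so that the factors $\widetilde\eta_\ell$ and $\widetilde\zeta_\ell$ recombine into their product with the precise rate $s+t$. The remaining pieces --- quasi-optimal D\"orfler marking near the threshold $\theta_\opt$, quasi-monotonicity, and the geometric summation --- are by now standard once the abstract axioms of Proposition~\ref{prop:axioms-sum} are available, which is why the statement can be cited verbatim from \cite[Theorem~13]{fpz16}.
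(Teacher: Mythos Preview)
The paper does not give a proof of this proposition at all: the statement is marked with \qed\ and is cited verbatim from \cite[Theorem~13]{fpz16}, the surrounding text saying only ``we get the following result from, e.g., \cite[Theorem~13]{fpz16}''. Your outline is therefore not something to compare against a proof in the paper, but rather a (correct) sketch of the argument behind the cited theorem.

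That said, your sketch is accurate and hits the decisive point: for the weighted marking \eqref{eq:weighted-estimator}, a set satisfying the D\"orfler criterion for \emph{either} factor alone already satisfies (up to the harmless factor $1/2$ from \eqref{eq:estimator-equivalence}) the combined D\"orfler criterion for $\varrho_\ell$, so minimality of $\MM_\ell$ indeed yields $\#\MM_\ell\lesssim\min\{\#\MM_\ell^u,\#\MM_\ell^z\}$, and the interpolation $\min\{A,B\}\le A^{s/(s+t)}B^{t/(s+t)}$ then produces the summed rate. The remaining steps (discrete reliability giving optimal D\"orfler marking below $\theta_\opt$, overlay plus approximation class for the comparison meshes, closure estimate plus linear convergence for the geometric sum) are exactly the machinery of \cite{fpz16,fgh+16} applied to the vector-valued estimators $\widetilde\eta$, $\widetilde\zeta$, which is legitimate by Proposition~\ref{prop:axioms-sum}.
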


Finally, Theorem~\ref{th:optimal-rates} follows from Proposition~\ref{prop:cited-optimal-rates} by relating the different approximation classes used in both results.

\begin{proof}[Proof of Theorem~\ref{th:optimal-rates}]
	Let $r > 0$.
	We show that $\sum_{i=0}^{n_\QQ} \norm{u_i}_{\A_r}^2 \simeq \norm{\u}_{\A_r}^2$.
	From the definitions~\eqref{eq:approximation-classes} and~\eqref{eq:vector-approximation-classes} of the approximation classes, we immediately see that, for all $i = 0, \ldots, n_\QQ$,
	\begin{align*}
		\norm{u_i}_{\A_r}
		&=
		\sup_{N \in \N_0} (N+1)^r \min_{\TT_{\rm opt} \in \T(N)} \eta_{{\rm opt},i}(u_{{\rm opt},i})\\
		&\leq
		\sup_{N \in \N_0} (N+1)^r \min_{\TT_{\rm opt} \in \T(N)} \Big[ \sum_{i=0}^{n_\QQ} \eta_{{\rm opt},i}(u_{{\rm opt},i})^2 \Big]^{1/2}
		=
		\norm{\u}_{\A_r}.
	\end{align*}
	Summing the last estimate for all $i = 0, \ldots, n_\QQ$, we obtain that
	\begin{equation}
	\label{eq:proof-equiv-1}
		\frac{1}{n_\QQ + 1} \sum_{i=0}^{n_\QQ} \norm{u_i}_{\A_r}^2
		\leq
		\norm{\u}_{\A_r}^2.
	\end{equation}
	
	For the converse estimate, we fix $N \in \N$ and define $K := \lfloor N/(n_\QQ + 1) \rfloor$.
	Let further be $\TT_k \in \TT(K)$ for $k = 0, \ldots, n_\QQ$ such that
	\begin{equation*}
		\eta_{k,k}(u_{k,k})
		=
		\min_{\TT_{\rm opt} \in \T(K)} \eta_{{\rm opt},k}(u_{{\rm opt},k}).
	\end{equation*}
	With the overlay estimate~\eqref{eq:mesh-overlay}, we have that
	\begin{equation*}
		\#\bigoplus_{k=0}^{n_\QQ} \TT_k
		=
		\Big[ \sum_{k=0}^{n_\QQ} \#\TT_k \Big] - n_\QQ \#\TT_0
		=
		\Big[ \sum_{k=0}^{n_\QQ} (\#\TT_k - \#\TT_0) \Big] + \#\TT_0
		=
		\Big[ \sum_{k=0}^{n_\QQ} K \Big] + \#\TT_0
		=
		N + \#\TT_0.
	\end{equation*}
	Therefore, it holds that $ \TT_\triangle := \bigoplus_{k=0}^{n_\QQ} \TT_k \in \T(N)$.
	From this, we infer that
	\begin{equation*}
		\min_{\TT_{\rm opt} \in \T(N)} \Big[ \sum_{i=0}^{n_\QQ} \eta_{{\rm opt},i}(u_{{\rm opt},i})^2 \Big]^{1/2}
		\leq
		\Big[ \sum_{i=0}^{n_\QQ} \eta_{\triangle,i}(u_{\triangle,i})^2 \Big]^{1/2}
		\, \eqreff{eq:linear-convergence}{\leq}
		\Clin \, \Big[ \sum_{i=0}^{n_\QQ} \eta_{i,i}(u_{i,i})^2 \Big]^{1/2}.
	\end{equation*}
	Multiplying this by $(N+1)^r$, we obtain that
	\begin{align*}
	(N+1)^r &\min_{\TT_{\rm opt} \in \T(N)} \Big[ \sum_{i=0}^{n_\QQ} \eta_{{\rm opt},i}(u_{{\rm opt},i})^2 \Big]^{1/2}
	\leq
	\Clin \, (N+1)^r \Big[ \sum_{i=0}^{n_\QQ} \eta_{i,i}(u_{i,i})^2 \Big]^{1/2}\\
	&=
	\Clin \, \Big(\frac{N+1}{K+1}\Big)^r (K+1)^r \Big[ \sum_{i=0}^{n_\QQ} \eta_{i,i}(u_{i,i})^2 \Big]^{1/2}\\
	&=
	\Clin \, \Big(\frac{N+1}{K+1}\Big)^r (K+1)^r \Big[ \sum_{i=0}^{n_\QQ} \min_{\TT_{\rm opt} \in \T(K)} \eta_{{\rm opt},i}(u_{{\rm opt},i})^2 \Big]^{1/2}\\
	&\leq
	\Clin \, \Big(\frac{N+1}{K+1}\Big)^r \Big[ \sum_{i=0}^{n_\QQ} \norm{u_i}_{\A_r}^2 \Big]^{1/2}.
	\end{align*}
	Taking the supremum over all $N \in \N$ of the last estimate and using $(N+1)/(K+1) \leq n_\QQ + 2$, we finally arrive at
	\begin{equation}
	\label{eq:proof-equiv-2}
		\norm{\u}_{\A_r}
		\leq
		\Clin \, (n_\QQ + 2)^{r} \Big[ \sum_{i=0}^{n_\QQ} \norm{u_i}_{\A_r}^2 \Big]^{1/2}.
	\end{equation}
	Thus, by combining~\eqref{eq:proof-equiv-1}--\eqref{eq:proof-equiv-2}, we have that
	\begin{equation*}
		\frac{1}{n_\QQ + 1} \sum_{i=0}^{n_\QQ} \norm{u_i}_{\A_s}^2
		\leq
		\norm{\u}_{\A_s}^2
		\leq
		\Clin^2 (n_\QQ + 2)^{2s} \sum_{i=0}^{n_\QQ} \norm{u_i}_{\A_s}^2.
	\end{equation*}
	Clearly, it thus holds $\norm{\u}_{\A_s} < \infty$ if and only if $s = \min \set{s_i}{0 \leq i \leq n_{\QQ}}$.
	Analogously, it follows that
	\begin{equation*}
		\frac{1}{n_\CC} \sum_{j=1}^{n_\CC} \norm{z_j}_{\A_t}^2
		\leq
		\norm{\z}_{\A_t}^2
		\leq
		\Clin^2 \, (n_\CC+1)^{2t} \sum_{j=1}^{n_\CC} \norm{z_j}_{\A_t}^2,
	\end{equation*}
	and $\norm{\z}_{\A_t} < \infty$ if and only if $t = \min \set{t_j}{1 \leq j \leq n_{\CC}}$.
	
	Finally, combining the last statements with the statement of Proposition~\ref{prop:cited-optimal-rates}, we conclude the proof with $\Copt:= \Clin^2 (n_\QQ + 2)^{s+1/2} (n_\CC+1)^{t+1/2} \expandafter\widetilde\Copt$.
\end{proof}


\section{Numerical examples}\label{sec:numerics}

For the following examples, we consider the initial mesh $\TT_0$ of $\Omega := (0,1)^2 \subseteq \R^2$ shown in Figure~\ref{fig:initial-mesh} with the sets
\begin{align*}
	T_1 &:= \set{x \in \R^2}{x_1 + x_2 > 3/2} \cap \Omega,\\
	T_2 &:= \set{x \in \R^2}{x_1 + x_2 < 1/2} \cap \Omega,\\
	T_3 &:= \set{x \in \R^2}{\max\{ x_1, x_2 \} < 1/4} \cap \Omega.
\end{align*}

With the characteristic function $\chi_\omega$ of a measurable subset $\omega \subseteq \Omega$, we define
\begin{align*}
	f_1 &:= 4 x_1(1-x_1) + 4 x_2(1-x_2),
	\quad
	f_2 := 5 \pi^2 \sin(\pi x_1) \sin(2\pi x_2),\\
	\g_1 &:= (1,0)^\intercal \chi_{T_1},
	\quad
	\g_2 := (-1,0)^\intercal \chi_{T_2},
	\quad
	g_3 := \chi_{T_3},
\end{align*}
as well as $f_0 = g_1 = g_2 = 0$ and $\f_0 = \f_1 = \f_2 = \g_3 = \vec{0}$.
In all our experiments, we set $\alpha = 0$.
Since all $f_i, g_j, \f_i, \g_j$ are linearly independent, the matrix $\B^\intercal \B$ has full rank.
In particular, condition~\eqref{eq:locally-convex} is satisfied; see Remark~\ref{rem:ls-solve}.
As marking parameter, we use $\theta = 0.5$.

\begin{figure}
	\centering
		
\begin{tikzpicture}[x=0.3\linewidth,y=0.3\linewidth]
\coordinate (P0) at (0.00,0.00);
\coordinate (P1) at (1.00,0.00);
\coordinate (P2) at (1.00,1.00);
\coordinate (P3) at (0.00,1.00);
\coordinate (P4) at (0.00,0.50);
\coordinate (P5) at (1.00,0.50);
\coordinate (P6) at (0.50,1.00);
\coordinate (P7) at (0.50,0.00);

\fill[fill=lightgray] (P0) -- (P7) -- (P4) -- cycle;
\fill[fill=lightgray] (P2) -- (P5) -- (P6) -- cycle;
\fill[fill=lightgray] (0.25,0.25) -- (0.75,0.25) -- (0.75,0.75) -- (0.25,0.75) -- cycle;

\draw (P0) -- (P1) -- (P2) -- (P3) -- cycle;
\draw (P0) -- (P2);
\draw (P1) -- (P3);
\draw (P4) -- (P5);
\draw (P6) -- (P7);
\draw (P4) -- (P6);
\draw (P5) -- (P7);
\draw (P4) -- (P7);
\draw (P5) -- (P6);
\draw (0.25,0) -- (0.25,1);
\draw (0.75,0) -- (0.75,1);
\draw (0,0.25) -- (1,0.25);
\draw (0,0.75) -- (1,0.75);

\node[at={(0.93,0.82)}] {\Large $T_1$};
\node[at={(0.18,0.07)}] {\Large $T_2$};
\node[at={(0.59,0.32)}] {\Large $T_3$};

\end{tikzpicture}%
	\hspace{1em}
	\includegraphics[width=0.3\linewidth]{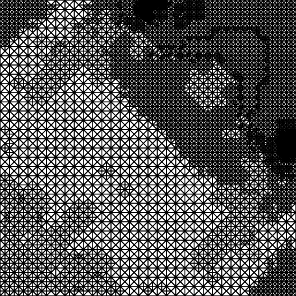}
	\hspace{1em}
	\includegraphics[width=0.3\linewidth]{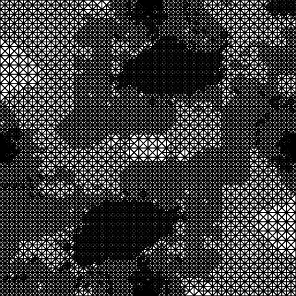}
	\caption{Left: Initial mesh $\TT_0$ of the unit square $(0,1)^2$ for numerical experiments.
	Middle: Mesh with $\#\TT_{10} = 11096$ elements for the setting of Section~\ref{subsec:numerics-1}.
	Right: Mesh with $\#\TT_9 = 16080$ elements for the setting of Section~\ref{subsec:numerics-2}.}
	\label{fig:initial-mesh}
\end{figure}

\subsection{Single parameter and measurement}\label{subsec:numerics-1}

For our first experiment, we consider the following parametrized discrete PDE problem with parameter $\p \in \QQ:= \R$:
Find $u_\coarse(\p) \in \XX_\coarse$ such that
\begin{equation}
\label{eq:numerics-problem-1}
	a(u_\coarse(\p), v_\coarse)
	:=
	\int_\Omega	\nabla u_\coarse(\p) \cdot \nabla v_\coarse \d{x}
	=
	p_1 \int_\Omega f_1 v_\coarse \d{x}
	=:
	b(\p, v)
	\quad
	\text{for all } v_\coarse \in \XX_\coarse.
\end{equation}
The exact (continuous) solution of this problem is known to be
\begin{equation*}
	u(\p)
	=
	p_1 u_1
	=
	p_1 x_1 x_2 (1-x_1) (1-x_2).
\end{equation*}
We further suppose that we have one measurement (corresponding to $\p^\star = 1$)
\begin{equation*}
	\G^\star
	=
	\frac{11}{960}
	=
	\G(u(\p^\star))
	:=
	- \int_\Omega \g_1 \cdot \nabla u(\p^\star) \d{x}
	=
	-\int_{T_1} \frac{\partial u(\p^\star)}{\partial x_1} \d{x}.
\end{equation*}
We compute an approximation to $\p^\star$ by two methods:
\begin{itemize}
	\item The approximations $\p_\ell^\star$ are obtained by our adaptive Algorithm~\ref{algorithm}, which is driven by the estimator $\varrho_{\ell}$ from~\eqref{eq:weighted-estimator}.
	
	\item The approximations $\overline{\p}_\ell^\star$ are obtained by our adaptive algorithm, where $\varrho_\ell$ is substituted by $\overline{\varrho}_\ell(\overline{\p}_\ell^\star) := \eta_\ell(\overline{\p}_\ell^\star) + \zeta_\ell(\overline{\p}_\ell^\star)$, which is the prevalent error estimator from the existing literature on AFEM for optimal control problems~\cite{bm11,gy17}.
	Here, for $\p \in \QQ$, $\eta_\coarse(\p)$ and $\zeta_\coarse(\p)$ are the residual error estimators of the energy errors of $u_\coarse(\p)$ from~\eqref{eq:problem} and $z_\coarse(\p)$ from~\eqref{eq:z-equation}, respectively.
\end{itemize}
The results can be seen in Figure~\ref{fig:singleparameter}.
We see that the classical estimator $\overline{\varrho}_\ell(\overline{\p}_\ell^\star)$ drastically underestimates the rate of the parameter error, whereas our estimator $\varrho_\ell$ matches it perfectly.
Also, the parameter error of our approach is uniformly better by some (small) multiplicative factor.
However, this effect is negligible for large $\#\TT_\ell$.

\begin{figure}
	\centering
		
\pgfplotstableread[col sep = comma]{figures/singleParameterSumEstimator.dat}{\sumEst}%
\pgfplotstableread[col sep = comma]{figures/singleParameterProductEstimator.dat}{\prodEst}%

\begin{tikzpicture}[scale=1]
\begin{loglogaxis}[xlabel=$\#\mathcal{T}_\ell$, ylabel={error / estimator},
width=0.6\textwidth,
legend pos=south west, legend columns=1, 
legend style={
	draw=none,
	fill=none,
	font=\small,
}, legend cell align={left}]

	\addplot [red,thick,mark=o] table [x={nElements}, y={errEstimator}] {\prodEst};
	\addplot [red,thick,mark=x] table [x={nElements}, y={paramError}] {\prodEst};
	\addplot [blue,thick,mark=o] table [x={nElements}, y={errEstimator}] {\sumEst};
	\addplot [blue,thick,mark=x] table [x={nElements}, y={paramError}] {\sumEst};
	
	\addplot[dashed] table [x={nElements}, y expr={2/\thisrowno{0}}] {\prodEst} node[below,rotate=-37] at (axis cs:5E4,5E-5) {\tiny{$\propto (\#\mathcal{T}_\ell)^{-1}$}};
	\addplot[dashed] table [x={nElements}, y expr={3/sqrt(\thisrowno{0})}] {\prodEst} node[below,rotate=-18] at (axis cs:5E4,8E-2) {\tiny{$\propto (\#\mathcal{T}_\ell)^{-1/2}$}};
	
	\legend{$\varrho_\ell$ from \eqref{eq:weighted-estimator},%
		$\| \boldsymbol{p}^\star - \boldsymbol{p}^\star_\ell \|_{\mathcal{Q}}$,%
		$\overline{\varrho}_\ell(\overline{\boldsymbol{p}}^\star_\ell) = \eta_\ell(\overline{\boldsymbol{p}}^\star_\ell) + \zeta_\ell(\overline{\boldsymbol{p}}^\star_\ell)$,%
		$\| \boldsymbol{p}^\star - \overline{\boldsymbol{p}}^\star_\ell \|_{\mathcal{Q}}$}
\end{loglogaxis}

\end{tikzpicture}
	\caption{Results for the problem from Section~\ref{subsec:numerics-1}.
	The $\p^\star_\ell$ are computed by our adaptive Algorithm~\ref{algorithm} driven by the estimator $\varrho_\ell$; the $\overline{\p}_\ell^\star$ are computed by an algorithm driven by $\overline{\varrho}_\ell(\overline{\p}_\ell^\star) = \eta_\ell(\overline{\p}_\ell^\star) + \zeta_\ell(\overline{\p}_\ell^\star)$.}
	\label{fig:singleparameter}
\end{figure}

\subsection{Multiple parameters and measurements with perturbation}\label{subsec:numerics-2}

For our second experiment, we consider the following parametrized discrete PDE problem with parameter $\p \in \QQ:= \R^2$:
Find $u_\coarse(\p) \in \XX_\coarse$ such that
\begin{equation}
\label{eq:numerics-problem-2}
\begin{split}
	a(u_\coarse(\p), v_\coarse)
	&:=
	\int_\Omega	\nabla u_\coarse(\p) \cdot \nabla v_\coarse \d{x}\\
	&=
	p_1 \int_\Omega f_1 v_\coarse \d{x} + p_2 \int_\Omega f_2 v_\coarse \d{x}
	=:
	b(\p, v)
	\quad
	\text{for all } v_\coarse \in \XX_\coarse.
\end{split}
\end{equation}

The exact (continuous) solution of this problem is known to be
\begin{equation*}
	u(\p)
	=
	p_1 u_1 + p_2 u_2
	=
	p_1 x_1 x_2 (1-x_1) (1-x_2) + p_2 \sin(\pi x_1) \sin(2\pi x_2).
\end{equation*}
We further suppose that we have three \emph{exact} measurements (corresponding to the exact parameter $\p^\star = (2, 1/2)^\intercal$)
\begin{align*}
	\overline{\G}^\star
	&=
	\Big( \frac{11\pi+160}{480\pi}, \frac{11\pi-160}{480\pi}, \frac{121}{4608}  \Big)^\intercal
	=
	\big( G_1(u(\p^\star)), G_2(u(\p^\star)), G_3(u(\p^\star)) \big)^\intercal\\
	&:=
	\Big( - \int_\Omega \g_1 \cdot \nabla u(\p^\star) \d{x},
	- \int_\Omega \g_2 \cdot \nabla u(\p^\star) \d{x},
	\int_\Omega g_3 u(\p^\star) \d{x} \Big)^\intercal\\
	&=
	\Big( -\int_{T_1} \frac{\partial u(\p^\star)}{\partial x_1} \d{x},
	\int_{T_2} \frac{\partial u(\p^\star)}{\partial x_1} \d{x},
	\int_{T_3} u(\p^\star) \d{x} \Big)^\intercal.
\end{align*}
These exact measurements are perturbed by Gaussian random noise $X_k \sim N(0,\sigma^2)$ for $i = 1,2,3$ and for some standard deviation $\sigma \geq 0$, such that $\G^\star = \overline{\G}^\star + (X_1, X_2, X_3)^\intercal$.
Note that, for non-vanishing perturbation, $\G^\star$ does not necessarily coincide with $\G(u(\p^\star))$ anymore.
Likewise, the sequence $(\p_\ell^\star)_{\ell \in \N}$ does not converge to the given parameter $\p^\star$ but rather to some $\overline{\p}^\star \in \QQ$ which is the least-squares solution to~\eqref{eq:ls-functional} on the continuous level with the perturbed measurements $\G^\star$.

We compute $\p^\star_\ell$ by Algorithm~\ref{algorithm} with different levels of perturbation.
The results can be seen in Figure~\ref{fig:multipleparameters}.
We see that, for the different levels of perturbation, the parameter error cannot fall beyond a threshold that depends on the standard deviation $\sigma$ of the perturbation.
This is to be expected, since inference of parameters from experiments is limited by the accuracy of the measurement.
Our estimator, however, is independent of the discrete parameter and measurements and, hence, continues to converge independently of the perturbation.
In particular, the matrix $\B_\ell$ on the finest level of the algorithm can be stored and reused to compute a refined parameter value if a new set of measurements with improved accuracy becomes available.
This is not the case for the sum estimator $\overline{\varrho}_\ell(\overline{\p}_\ell^\star)$ from the last section, since it explicitly depends on the parameter estimates.

\begin{figure}
	\centering
\pgfplotstableread[col sep = comma]{figures/multipleParameters.dat}{\est}%

\begin{tikzpicture}[scale=1]
\begin{loglogaxis}[xlabel=$\#\mathcal{T}_\ell$, ylabel={error / estimator},
ymin=5e-10, ymax=1E1,
width=0.6\textwidth,
legend pos=south west, legend columns=1, 
legend style={
	draw=none,
	fill=none,
	font=\small,
}, legend cell align={left}]

	\addplot [blue,thick,mark=o] table [x={nElements}, y={errEstimator}] {\est};
	\addplot [red,thick,mark=triangle] table [x={nElements}, y={paramError_1}] {\est};
	\addplot [red,thick,mark=square] table [x={nElements}, y={paramError_2}] {\est};
	\addplot [red,thick,mark=pentagon] table [x={nElements}, y={paramError_3}] {\est};
	\addplot [red,thick,mark=o] table [x={nElements}, y={paramError_4}] {\est};
	
	\addplot[dashed] table [x={nElements}, y expr={2/\thisrowno{0}}] {\est} node[below,rotate=-27] at (axis cs:1E4,2E-4) {\tiny{$\propto (\#\mathcal{T}_\ell)^{-1}$}};
	
	\legend{$\varrho_\ell$ from \eqref{eq:weighted-estimator},%
		{$\| \boldsymbol{p}^\star - \boldsymbol{p}^\star_\ell \|_{\mathcal{Q}}, ~\sigma = 10^{-3}$},%
		{$\| \boldsymbol{p}^\star - \boldsymbol{p}^\star_\ell \|_{\mathcal{Q}}, ~\sigma = 10^{-5}$},%
		{$\| \boldsymbol{p}^\star - \boldsymbol{p}^\star_\ell \|_{\mathcal{Q}}, ~\sigma = 10^{-7}$},%
		{$\| \boldsymbol{p}^\star - \boldsymbol{p}^\star_\ell \|_{\mathcal{Q}}, ~\sigma = 0$}}
\end{loglogaxis}

\end{tikzpicture}
	\caption{Results for the problem from Section~\ref{subsec:numerics-2}.
	Differently marked lines are obtained by perturbing the true measurements by Gaussian noise with standard deviation $\sigma$.}
	\label{fig:multipleparameters}
\end{figure}



{
	\renewcommand{\section}[3][]{\vskip4mm\begin{center}\bf\normalsize R\small EFERENCES\normalsize\end{center}\vskip2mm}
	
	\bibliographystyle{alpha}
	\bibliography{literature}

\begin{thebibliography}{10}

\bibitem{bet11}
{\sc R.~Becker, E.~Estecahandy, and D.~Trujillo}, {\em Weighted marking for
  goal-oriented adaptive finite element methods}, SIAM J. Numer. Anal., 49
  (2011), pp.~2451--2469.

\bibitem{bm11}
{\sc R.~Becker and S.~Mao}, {\em Quasi-optimality of an adaptive finite element
  method for an optimal control problem}, Comput. Methods Appl. Math., 11
  (2011), pp.~107--128, \url{https://doi.org/10.2478/cmam-2011-0006}.

\bibitem{bv04}
{\sc R.~Becker and B.~Vexler}, {\em A posteriori error estimation for finite
  element discretization of parameter identification problems}, Numer. Math.,
  96 (2004), pp.~435--459, \url{https://doi.org/10.1007/s00211-003-0482-9}.

\bibitem{bv05}
{\sc R.~Becker and B.~Vexler}, {\em Mesh refinement and numerical sensitivity
  analysis for parameter calibration of partial differential equations}, J.
  Comp. Phys., 206 (2005), pp.~95--110.

\bibitem{bdd04}
{\sc P.~Binev, W.~Dahmen, and R.~DeVore}, {\em Adaptive finite element methods
  with convergence rates}, Numer. Math., 97 (2004), pp.~219--268.

\bibitem{axioms}
{\sc C.~Carstensen, M.~Feischl, M.~Page, and D.~Praetorius}, {\em Axioms of
  adaptivity}, Comput. Math. Appl., 67 (2014), pp.~1195--1253.

\bibitem{ckns08}
{\sc J.~M. Cascon, C.~Kreuzer, R.~H. Nochetto, and K.~G. Siebert}, {\em
  Quasi-optimal convergence rate for an adaptive finite element method}, SIAM
  J. Numer. Anal., 46 (2008), pp.~2524--2550.

\bibitem{doerfler96}
{\sc W.~D{\"o}rfler}, {\em A {C}onvergent {A}daptive {A}lgorithm for
  {P}oisson’s {E}quation}, SIAM J. Numer. Anal., 33 (1996), pp.~1106--1124.

\bibitem{ffp14}
{\sc M.~Feischl, T.~Führer, and D.~Praetorius}, {\em Adaptive {FEM} with
  optimal convergence rates for a certain class of nonsymmetric and possibly
  nonlinear problems}, SIAM J. Numer. Anal., 52 (2014), pp.~601--625,
  \url{https://doi.org/10.1137/120897225}.

\bibitem{fgh+16}
{\sc M.~Feischl, G.~Gantner, A.~Haberl, D.~Praetorius, and T.~F{\"u}hrer}, {\em
  Adaptive boundary element methods for optimal convergence of point errors},
  Numer. Math., 132 (2016), pp.~541--567,
  \url{https://doi.org/10.1007/s00211-015-0727-4}.

\bibitem{fpz16}
{\sc M.~Feischl, D.~Praetorius, and K.~G. v.~d. Zee}, {\em An abstract analysis
  of optimal goal-oriented adaptivity}, SIAM J. Numer. Anal., 54 (2016),
  pp.~1423--1448.

\bibitem{gy17}
{\sc W.~Gong and N.~Yan}, {\em Adaptive finite element method for elliptic
  optimal control problems: convergence and optimality}, Numer. Math., 135
  (2017), pp.~1121--1170, \url{https://doi.org/10.1007/s00211-016-0827-9}.

\bibitem{gyz16}
{\sc W.~Gong, N.~Yan, and Z.~Zhou}, {\em Convergence of {$L^2$}-norm based
  adaptive finite element method for elliptic optimal control problems},
  Preprint arXiv:1608.08699,  (2016).

\bibitem{hpw2021}
{\sc P.~Heid, D.~Praetorius, and T.~P. Wihler}, {\em Energy contraction and
  optimal convergence of adaptive iterative linearized finite element methods},
  Computational Methods in Applied Mathematics, 21 (2021), pp.~407--422,
  \url{https://doi.org/10.1515/cmam-2021-0025}.

\bibitem{kpp13}
{\sc M.~Karkulik, D.~Pavlicek, and D.~Praetorius}, {\em On 2{D} newest vertex
  bisection: Optimality of mesh-closure and ${H}^1$-stability of
  ${L}_2$-projection}, Constr. Approx., 38 (2013), pp.~213--234.

\bibitem{lc17}
{\sc H.~Leng and Y.~Chen}, {\em Convergence and quasi-optimality of an adaptive
  finite element method for optimal control problems on {$L^2$} errors}, J.
  Sci. Comput., 73 (2017), pp.~438--458,
  \url{https://doi.org/10.1007/s10915-017-0425-8}.

\bibitem{ms09}
{\sc M.~S. Mommer and R.~Stevenson}, {\em A goal-oriented adaptive finite
  element method with convergence rates}, SIAM J. Numer. Anal., 47 (2009),
  pp.~861--886.

\bibitem{mns02}
{\sc P.~Morin, R.~H. Nochetto, and K.~G. Siebert}, {\em Local problems on
  stars: a posteriori error estimates, convergence, and performance}, Math.
  Comp., 72 (2002), pp.~1067--1097.

\bibitem{nocedal-wright}
{\sc J.~Nocedal and S.~J. Wright}, {\em Numerical Optimization}, Springer New
  York, 2006, \url{https://doi.org/10.1007/978-0-387-40065-5}.

\bibitem{stevenson07}
{\sc R.~Stevenson}, {\em Optimality of a standard adaptive finite element
  method}, Found. Comput. Math., 7 (2007), pp.~245--269.

\bibitem{stevenson08}
{\sc R.~Stevenson}, {\em The completion of locally refined simplicial
  partitions created by bisection}, Math. Comp., 77 (2008), pp.~227--241.

\end{thebibliography}
}

\end{document}